\definecolor{gray}{rgb}{0.25, 0.25, 0.25}
\newtheorem{theorem}{Theorem}[section]
\newtheorem{lemma}[theorem]{Lemma}
\newtheorem{claim}[theorem]{Claim}
\newtheorem{cor}[theorem]{Corollary}
\newtheorem{conj}[theorem]{Conjecture}
\newtheorem*{observation*}{Observation}
\newtheorem{proposition}[theorem]{Proposition}
\newtheorem{problem}[theorem]{Problem}
\newtheorem*{question*}{Question}
\newenvironment{definition*}
  {
   \innerdefinition}
  {\endinnerdefinition}
\theoremstyle{definition}
\newtheorem{example}[theorem]{Example}
\theoremstyle{remark}
\newenvironment{poc}{\begin{proof}[Proof of claim]}{\end{proof}}
\newcommand{\C}[1]{{\protect\mathcal{#1}}}
\newcommand{\ceil}[1]{\lceil #1\rceil}
\newcommand{\floor}[1]{\lfloor #1\rfloor}
\newcommand{\EE}{\mathbb E}
\newcommand*{\abs}[1]{\lvert#1\rvert}
\title{More on Nosal's spectral theorem: Books and $4$-cycles}
\author{
Yongtao Li\thanks{Yau Mathematical Sciences Center, Tsinghua University, Beijing, China. Email: \url{yongtao_li@mail.tsinghua.edu.cn}. Supported by the Shuimu Scholar Program at Tsinghua University.}
\and
Hong Liu\thanks{Extremal Combinatorics and Probability Group (ECOPRO), Institute for Basic Science (IBS), Daejeon, South Korea. Email: \url{hongliu@ibs.re.kr}. Supported by the Institute for Basic Science (IBS-R029-C4).}
\and
Shengtong Zhang\thanks{Department of Mathematics, Stanford University, CA, USA. Email: \url{stzh1555@stanford.edu}. Supported by the Craig Franklin Fellowship in Mathematics at Stanford University.}
}
\date{\today}
\begin{document}
\maketitle

\begin{abstract}
Spectral graph theory studies how the eigenvalues of a graph relate to the structural properties of a graph. In this paper, we solve three open problems in spectral extremal graph theory which generalize the classical Tur\'{a}n-type supersaturation results.
   \begin{itemize}
    \item We prove that every $m$-edge graph $G$ with the spectral radius $\lambda (G) > \sqrt{m}$ contains at least $\frac{1}{144} \sqrt{m}$ triangles sharing a common edge. This result confirms a conjecture of Nikiforov, and Li and Peng. Moreover, the bound is optimal up to a constant factor.

    \item Next, for $m$-edge graph $G$ with $\lambda (G) > \sqrt{(1-\frac{1}{r})2m}$, we show that it must contain 
    $\Omega_r (\sqrt{m})$ copies of $K_{r+1}$ sharing $r$ common vertices. This confirms a conjecture of Li, Liu and Feng and unifies a series of spectral extremal results on books and cliques. Moreover, we also show that such a graph $G$ contains $\Omega_r  (m^{\frac{r-1}{2}})$ copies of $K_{r+1}$. This extends a result of Ning and Zhai for counting triangles. 

    \item We prove that every $m$-edge graph $G$ with $\lambda (G) > \sqrt{m}$ contains at least $(\frac{1}{8}-o(1)) m^2$ copies of 4-cycles, and we provide two constructions showing that the constant $\frac{1}{8}$ is the best possible. This result settles a problem raised by Ning and Zhai, and it gives the first asymptotics for counting degenerate bipartite graphs. 
   \end{itemize}
   The key to our proof are two structural results we obtain for graphs with large spectral radii on their maximum degree and on existence of large structured subgraphs, which we believe to be of independent interest.
\end{abstract}

\begin{center} \emph{In memory of Prof. Vladimir Nikiforov.} \end{center}

%[HL]: Let's display key words and MS code only when the journal requires them.
%{{\bf Key words:}   Extremal graph theory; Eigenvalues; Spectral radius. }

%{{\bf 2010 Mathematics Subject Classification.}  05C35; 05C50.}

\section{Introduction} 
We study spectral problems for supersaturation phenomena in extremal graph theory. Spectral extremal graph theory has enjoyed tremendous growth in the past few decades thanks to its connections and applications 
to numerous other fields. Throughout the paper, for a graph $G$, we write $n$ and $m$ for the number of vertices and edges in $G$, respectively.

Our starting point is a classical result of Nosal \cite{Nos1970} (see, e.g., \cite{Niki2002}), which states
that if $G$ is triangle-free, then its spectral radius satisfies $\lambda(G) \le \sqrt{m}.$
Nosal's theorem strengthens the fundamental result of Mantel (see, e.g., \cite{Bol1978}) that 
if $G$ is triangle-free, then $m\le \lfloor n^2/4\rfloor$, 
with equality if and only if $G=T_{n,2}$, where $T_{n,r}$ is the $n$-vertex complete $r$-partite Tur\'an graph whose part sizes are as equal as possible. Indeed, using the Rayleigh formula, 
we have $ 2m/n \le \lambda (G) \le \sqrt{m}$, which gives  
$m \le \lfloor n^2/4\rfloor = e(T_{n,2})$ and 
$\lambda (G)\le \sqrt{\lfloor n^2/4\rfloor} = \lambda (T_{n,2})$. 
%Moreover, Nosal's theorem implies a result of Lov\'{a}sz and Pelik\'{a}n \cite{LP1973}, which asserts that if $G$ is a tree on $n$ vertices, then $\lambda (G)\le \sqrt{n-1}$, with equality  if and only if $G=K_{1,n-1}$. 
In honor of this foundational result, in this paper we say that an $m$-edge graph $G$ is \textbf{Nosal} if  it satisfies $\lambda(G) > \sqrt{m}.$
There are various generalizations of Nosal's theorem for other forbidden subgraph $F$ in the literature; see \cite{Niki2002,BN2007,LNW2021,Zhang2024} for cliques, 
\cite{BG2009,Niki2010laa} for complete bipartite graphs,
 \cite{CDT2024,LZS2024,LN2023,Zhang-wq-2024} for cycles, 
 \cite{CDT2023,FLSZ2024} for trees, 
 \cite{CFTZ20, ZLX2022,Niki2009cpc,LLP2024-AAM,LFP2024-triangular} for others. 
 
%[HL]: I think the following are not that relevant. 
%Moreover, there is a rich body of work on bounding the  eigenvalues of a graph in terms of various structural parameters. For example, one can refer to \cite{Alon1986} for eigenvalues and expanders,  \cite{Chung1989} for eigenvalues and diameters, \cite{Hong1998} for spectral radius and genus, \cite{BZ2001} for spectral radius and cut vertices, \cite{C2006, Lu2012} for regularity and eigenvalues, \cite{CGN2007, LSW2007,XLGS2023} for non-regularity and spectral radius, \cite{BN2007} for cliques and spectral radius, \cite{LLT2007,GN2008} for independence number and eigenvalues, \cite{TT2017,LN2021} for planar and outerplanar graphs.  

 \iffalse 
 In particular, Bollob\'{a}s and Nikiforov \cite{BN2007}  
proposed the following nice conjecture.

\begin{conj}[Bollob\'{a}s--Nikiforov, 2007] \label{conj-BN}
Let $G$ be a $K_{r+1}$-free graph of order at least $r+1$ 
with $m$ edges. Then 
\begin{equation*}
{  \lambda_1^2(G)+ \lambda_2^2(G) 
\le 2m\Bigl( 1-\frac{1}{r}\Bigr) }. 
\end{equation*}
\end{conj}

In 2015, Ando and Lin \cite{AL2015} proved 
a stronger result for $r$-partite graphs. 
In 2021, Lin, Ning and Wu \cite{LNW2021} 
confirmed the base case $r=2$; see 
\cite{Niki2021} for an alternative proof. 
Recently, the third author \cite{Zhang2024} proved the conjecture for 
regular graphs. 
The Bollob\'{a}s--Nikiforov conjecture 
boosted the great interests of studying the 
extremal problems in spectral graph theory.
\fi

Supersaturation, appearing in many different problems in extremal combinatorics, refers to the phenomenon that when going beyond the extremal threshold, not just a single copy but a great number of forbidden structures would emerge. For instance, 
Rademacher (unpublished, see Erd\H{o}s \cite{Erd1955,Erdos1964}) showed that every 
$n$-vertex graph with at least $\lfloor n^2/4\rfloor +1$ edges contains at least $\lfloor n/2\rfloor$ triangles. Such problems have attracted a great deal of attentions; see, e.g.,~\cite{LS1975,LS1983,Mubayi2010,PY2017,HMY2021,LPS2020} for recent developments on  supersaturation. 

The study of spectral problems for supersaturation was initiated by
Bollob\'{a}s and Nikiforov \cite{BN2007} in 2007. In particular, they proved that the number of triangles $t(G)$ satisfies 
$t(G)\ge \frac{n^2}{12}(\lambda(G) - \frac{n}{2})$ and 
$t(G) \ge \frac{1}{3}\lambda(G) \bigl(\lambda^2(G) - m\bigr).$ 
The second inequality 
was independently proved by Cioab\u{a},  Feng,
Tait and Zhang \cite{CFTZ20}. 
Furthermore, Ning and Zhai \cite{NZ2021} showed that 
the equality holds if and only if 
$G$ is a complete bipartite graph. 
Moreover, Ning and Zhai also proved that if $G$ is Nosal with $m$ edges, 
 then $t(G)\ge \lfloor \frac{1}{2}(\sqrt{m}-1) \rfloor$; this bound is the best possible. 
The spectral supersaturation problems were also  
studied under the  condition $\lambda (G) > \lambda (T_{n,2})$ for $n$-vertex graphs $G$; 
see, e.g., \cite{NZ2021,ZL2022jgt,LFP2024-triangular,LFP-count-bowtie} for recent progresses. 

In this paper, we investigate the supersaturation for Nosal graphs.

\subsection{Books in Nosal graphs}
A \emph{book} of size $k$, denoted $B_k$, is a graph consisting of $k$ triangles sharing a common edge. For a graph $G$, denote by $bk(G)$ the size of the largest book it contains.
Erd\H{o}s \cite{Erd1962a} proved that every graph on $n$ vertices with $\lfloor n^2/4\rfloor +1$ edges satisfies $bk(G) = \Theta(n)$ and conjectured that $bk(G)> n / 6$, which was proven by Edwards (unpublished, see \cite[Lemma 4]{EFR1992}) 
and independently 
by Khad\v{z}iivanov and Nikiforov  \cite{KN1979}. 
Two alternative proofs were provided by 
Bollob\'{a}s and Nikiforov \cite{BN2005} and Li, Feng and Peng \cite[Sec. 4.3]{LFP2024-triangular}.  Problems on books have attracted considerable attention; see, e.g., \cite{FL2012,CFS2020-Siam,CFW2022,ZL2022jgt} and references therein.

Recently, Zhai, Lin and Shu \cite{ZLS2021} proved that if $G$ is Nosal with $m$ edges, then $G$ contains a copy of $K_{2,r}$ with $r >\frac{1}{4} \sqrt{m}$. Note that $B_{r}$ is $K_{2, r}$ with an extra edge. Zhai, Lin and Shu~\cite{ZLS2021} conjectured that $G$ also contains a large book. This was confirmed by Nikiforov \cite{Niki2021}, who proved that every $m$-edge Nosal graph $G$ satisfies $bk(G)>\frac{1}{12}\sqrt[4]{m}$. In the same paper, Nikiforov remarked that \emph{``The bound on $bk(G)$ seems far from optimal, as the multiplicative
constant and perhaps the exponent $1/4$ can be improved.''} Supporting Nikiforov's speculation, Li and Peng \cite{LP2022oddcycle} conjectured that the exponent can be improved to $1/2$. 

\begin{conj}[Nikiforov \cite{Niki2021}, Li--Peng \cite{LP2022oddcycle}] 
\label{conj-ZLS}
For every $m$-edge Nosal graph $G$, we have $$bk(G)=\Omega(\sqrt{m}).$$
\end{conj}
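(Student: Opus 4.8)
The plan is to prove a stronger statement: every $m$-edge Nosal graph $G$ contains a book of size $\Omega(\sqrt{m})$, and to extract this from a clean structural understanding of where the spectral radius ``lives'' in $G$. The starting observation is the identity $\lambda^2 \le \lambda \cdot \lambda$ applied via the Rayleigh quotient: if $\V{x}$ is the Perron eigenvector normalized so that $\max_i x_i = x_u = 1$, then $\lambda^2 = \sum_{ij \in E} 2 x_i x_j \cdot (\text{stuff})$ can be manipulated into a sum over edges of a local quantity. Concretely, from $\lambda \V{x} = A\V{x}$ one gets $\lambda^2 x_u = \sum_{v \sim u} \lambda x_v = \sum_{v \sim u} \sum_{w \sim v} x_w$, so $\lambda^2 = \sum_{v \sim u}\sum_{w \sim v} x_w \le \sum_{v \sim u} d(v) = \sum_{v \sim u} d(v)$. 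Since $G$ is Nosal, $\lambda^2 > m$, hence $\sum_{v \sim u} d(v) > m$. This says that the closed neighborhood of the max-eigenvector vertex $u$ already ``sees'' more than $m$ edge-endpoints, which is the engine for forcing many triangles through a single edge.

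The main idea is then a clever double counting centered at $u$. Write $N = N(u)$ and let $e(N)$ be the number of edges inside $N$; each such edge forms a triangle with $u$, so $u$ lies in $e(N)$ triangles. More refined: for a vertex $v \in N$, the number of triangles on the edge $uv$ equals $|N(u) \cap N(v)| = \text{codegree}(u,v)$, so $bk(G) \ge \max_{v \in N} \text{codeg}(u,v)$. The strategy is to show $\sum_{v \in N} \text{codeg}(u,v) = 2e(N)$ is large — of order $m$ — which combined with $|N| = d(u) \le$ (some bound) would give a codegree, hence a book, of size $\Omega(m / d(u))$. So the crux becomes controlling $d(u)$: if $d(u) = O(\sqrt m)$ we are done immediately, since then $2e(N) \ge \lambda^2 - \sum_{v\sim u}(\text{non-}N\text{-contributions})$ is $\Omega(m)$ and averaging over the $\le \sqrt m$ vertices of $N$ yields a codegree $\Omega(\sqrt m)$. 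Here is precisely where I would invoke the ``structural results on maximum degree'' that the abstract promises: the authors state they prove that Nosal graphs have controlled maximum degree (or that one can pass to a subgraph where this holds), and that is exactly the lever needed.

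The hard part will be handling the regime where $d(u)$ is large, say $d(u) = \omega(\sqrt m)$ — equivalently $d(u)$ close to its maximum possible value $\sim 2\sqrt m$ or even larger relative to $\sqrt m$ in sparse-ish graphs. In that case the neighborhood $N$ is big but possibly sparse, and the naive averaging loses too much. I expect the resolution to run as follows: use the eigenvector equation more carefully to show that not only is $\sum_{v \sim u} d(v)$ large, but in fact the eigenvector weight concentrates on a set of vertices that are pairwise at small distance, forcing a dense ``core'' — this is the promised ``existence of large structured subgraphs'' result. Within such a dense structured piece $H$ with $h$ vertices and $\Omega(h^2)$ edges (or $\Omega(m)$ edges on few vertices), a Kővári–Sós–Turán / convexity argument on codegrees forces a pair of vertices with codegree $\Omega(h)$, and one checks $h = \Omega(\sqrt m)$. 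The book of that size is then inherited by $G$. So the proof skeleton is: (1) eigenvector normalization and the $\sum_{v\sim u} d(v) > m$ inequality; (2) case $d(u) = O(\sqrt m)$ handled by direct averaging of codegrees over $N(u)$; (3) case $d(u) = \omega(\sqrt m)$ handled by quoting the structural theorem to find a dense bounded-size subgraph, then applying convexity to its codegree sequence; (4) assemble the constant. The genuinely delicate step is (3) — proving the structural dichotomy that either the max degree is small or a dense well-structured subgraph exists — and I would expect the bulk of the paper's technical work, and most of the loss in the final constant $\tfrac{1}{144}$, to be concentrated there.
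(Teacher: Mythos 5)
There is a genuine gap at the heart of your skeleton, in steps (2)--(3). From $\lambda^2 x_u=\sum_{v\sim u}\sum_{w\sim v}x_w$ with $x_u=\max_i x_i=1$ you correctly get $\sum_{v\sim u}d(v)>m$, but this sum equals $2e(N(u))$ plus the number of edges leaving $N(u)$ (including the $d(u)$ edges to $u$), so all it yields is $e(N(u))\ge \lambda^2-m$, which for a Nosal graph can be as small as a constant --- it is nowhere near $\Omega(m)$. Worse, the whole strategy of anchoring the book at the maximum-eigenvector vertex $u$ fails on the paper's own extremal example $K_{s,t}^{+}$ (the complete bipartite graph with $s\approx 2\sqrt m$, $t\approx \tfrac12\sqrt m$ plus one edge inside the $s$-side): there the Perron weight is maximized on the $t$-side, the neighborhood of such a vertex $u$ contains exactly one edge, and every codegree $|N(u)\cap N(v)|$ with $v\sim u$ is $O(1)$, while the actual large book of size $\approx\tfrac12\sqrt m$ sits on the added edge, far from $u$. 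So no averaging over $N(u)$, in either degree regime, can locate the book. Your fallback in step (3) also does not work: the structural results of the paper (the bound $\Delta(G)\le \tfrac m2+m^{0.99}$ and the dichotomy ``large bipartite subgraph or large subgraph of maximum degree $\varepsilon m$'') are proved and used only for the $C_4$-counting theorem; the dichotomy cannot supply a ``dense core'' for books, since one of its two outcomes is a \emph{bipartite} subgraph, which contains no triangles at all, and the max-degree bound allows degrees up to $\approx m/2$, not $O(\sqrt m)$.

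The paper's actual route is quite different and does not localize at any single vertex. It first proves a weighted version of the Edwards--Khad\v{z}iivanov--Nikiforov theorem (via a random blowup plus Hoeffding concentration): if vertex weights $w_i$ sum to $1$ and edge weights $p_{ij}\in[0,1]$ satisfy $\sum_{\{i,j\}\in E}p_{ij}w_iw_j>\tfrac14$, then some edge $\{i,j\}$ with $p_{ij}>0$ has $\sum_{k\in B(i,j)}p_{ik}p_{jk}w_k\ge\tfrac16$. It then takes $w_i=x_i^2$ and the engineered weights $p_{ij}=\max\{m^{-1/2}x_ix_j-(4m)^{-1},0\}/(x_i^2x_j^2)$; the Nosal condition makes the weighted density exceed $\tfrac14$, the positivity of $p_{ij}$ on the selected edge forces $x_i\gtrsim m^{-1/4}$, and a Cauchy--Schwarz step converts the weighted common-neighborhood bound into $|B'(i,j)|>\tfrac1{144}\sqrt m$. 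If you want to salvage your approach you would need a mechanism that finds the right edge globally rather than through the top eigenvector entry; as written, steps (2)--(4) do not go through.
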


Our first result confirms this conjecture and shows that the order $\sqrt{m}$ is the best possible. 

\begin{theorem} \label{thm-booksize}
If $G$ is an $m$-edge Nosal graph, then $bk (G) > \frac{1}{144} \sqrt{m}$.
Furthermore, there exist $m$-edge Nosal graphs with no book of size larger than $(\frac{1}{3}+o(1))\sqrt{m}$.
\end{theorem}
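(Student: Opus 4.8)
The plan is to prove the lower bound $bk(G) > \frac{1}{144}\sqrt{m}$ by analysing an eigenvector of $G$ and using the structural results the authors advertise (control of the maximum degree and existence of large structured subgraphs). Let $\V x$ be a Perron eigenvector of $G$ with eigenvalue $\lambda = \lambda(G) > \sqrt{m}$, normalised so that $\max_v x_v = 1$, attained at some vertex $u$. The identity $\lambda^2 x_u = \sum_{v \sim u} \lambda x_v = \sum_{v\sim u}\sum_{w\sim v} x_w$ rearranges to
\[
\sum_{v\sim u} \sum_{w\sim v,\, w\sim u} x_w \;=\; \lambda^2 x_u - \sum_{v\sim u}\sum_{\substack{w\sim v\\ w\not\sim u,\, w\neq u}} x_w - \sum_{v\sim u} x_u .
\]
The first double sum on the left counts, with $\V x$-weights, walks $u\to v\to w$ closing up a triangle through $u$; bounding $x_w\le 1$ it is at most $\sum_{v\sim u}|N(v)\cap N(u)| = \sum_{v\sim u} (\text{codegree of }u,v)$, and the edge $uv$ together with these common neighbours gives a book of size equal to that codegree. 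So if every edge at $u$ lies in a book of size at most $b := bk(G)$, the left side is at most $b\cdot d(u)$. The negative terms on the right must be controlled: $\sum_{v\sim u} x_u = d(u)$, and the "non-returning" walks $u\to v\to w$ with $w\not\sim u$ are bounded crudely by $\sum_{v\sim u}(d(v)-1) \le 2m - d(u)$ minus the edges inside $N(u)$; since the triangles through $u$ already account for $2e(N(u)) = \sum_{v\sim u}|N(v)\cap N(u)|$ edges, one gets a relation of the shape $\lambda^2 \le b\, d(u) + d(u) + \big(2m - d(u) - \tfrac12 b\, d(u)\big)/x_{\min\text{-ish}}$, which is too lossy as stated — this is exactly where the authors' structural input is needed.

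The key steps, in order, are: (i) invoke the structural lemma giving that a Nosal graph has a vertex (or a bounded-weight modification) of large degree, say $d(u) \ge c\sqrt{m}$ for an absolute constant $c$ — without this the walk count $\lambda^2 > m$ cannot be converted into a book of size $\Omega(\sqrt m)$, since if all degrees were $O(\sqrt m)$ one still needs the walk-localisation; (ii) use the second structural result (existence of a large structured subgraph) to pass to a subgraph where the eigenvector weights on the relevant neighbourhood are bounded below by a constant, so that the weighted walk counts become genuine combinatorial counts up to constant factors; (iii) run the eigenvalue-equation double-counting above on this cleaned-up subgraph to get $\lambda^2 \le (b + O(1))\,d(u) + (\text{lower-order})$, and combine with $\lambda^2 > m$ and $d(u) = O(\sqrt m)$ (trivially $d(u)\le \lambda^2 < $ something, or $d(u)^2 \le 2m$ if $G$ were close to bipartite) to solve for $b \gtrsim \sqrt m$; (iv) track constants carefully to land at $1/144$.

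The main obstacle is step (iii): closing the loop between the spectral quantity $\lambda^2 > m$ and a purely local count of triangles on a single edge. The naive walk count $\lambda^2 x_u = \sum_{w} (\#\text{walks } u\!\to\! v\!\to\! w) x_w$ mixes returning walks (which feed books at $u$) with long-range walks, and the latter can carry most of the weight unless one first removes, via the structural lemmas, the "spread out" part of the graph — e.g.\ deleting low-weight vertices or passing to a dense structured core where $x_v \ge \text{const}$ for all $v$ in play. Making this reduction while only losing constant factors in $m$ and in the eigenvalue gap is the crux; once one is on such a core, the inequality $b \ge (\lambda^2 - d(u) - o(d(u)))/d(u) - O(1)$ together with $d(u) \le (1+o(1))\sqrt m$ on the core finishes it. The matching construction for the upper bound $(\tfrac13 + o(1))\sqrt m$ should be a blow-up-type example: take a graph built from a small dense "spine" (contributing the large eigenvalue) attached to many pendant-like or low-codegree edges, arranged so that $\lambda > \sqrt m$ but every edge sees only $\sim \tfrac13\sqrt m$ common neighbours — e.g.\ a suitable book $B_k$ plus a disjoint union of stars or a $K_{2,t}$-like gadget, with parameters tuned so $k \sim \tfrac13\sqrt m$ and $m$ is dominated by the sparse part; verifying $\lambda > \sqrt m$ there is a short Rayleigh-quotient computation.
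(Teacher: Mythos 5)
Your lower-bound argument has a genuine gap, and you have in fact named it yourself: step (iii) is exactly the missing idea, and the structural results you invoke cannot fill it. Theorem \ref{lem:Nosal-max-degree} is an \emph{upper} bound $\Delta(G)\le m/2+m^{0.99}$, not a guarantee of a vertex of degree $\ge c\sqrt m$; and Theorem \ref{lem:Nosal-structure} produces a subgraph that is bipartite or has maximum degree $\le \varepsilon m$ --- neither gives a ``core'' on which the eigenvector entries are bounded below by a constant. Indeed no such core can carry a book of size $\Omega(\sqrt m)$: with the unit normalization $\sum_v x_v^2=1$ a set with $x_v\ge c$ has $O(1)$ vertices, and with your max-normalization the common neighbours of the chosen edge (the pages of the prospective book) may have arbitrarily small weight, so the weighted walk count $\lambda^2 x_u=\sum_w(\#\text{walks})x_w$ cannot be converted into a cardinality bound. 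Your closing inequality also uses $d(u)\le(1+o(1))\sqrt m$ on the core, which is false for Nosal graphs in general ($K_2\vee\frac{m-1}{2}K_1$ is Nosal with a vertex of degree about $m/2$), and nothing in your reduction preserves both $\lambda>(1-o(1))\sqrt m$ and such a degree bound. The paper's proof takes a different route around precisely this obstruction: it proves a weighted Edwards--Khad\v{z}iivanov--Nikiforov theorem (Lemma \ref{lem:Edwards-Blowup}, via a random blow-up and concentration), applies it with vertex weights $w_i=x_i^2$ and the engineered edge weights $p_{ij}=\max\{m^{-1/2}x_ix_j-(4m)^{-1},0\}/(x_i^2x_j^2)$, so that $\lambda>\sqrt m$ gives $\sum p_{ij}w_iw_j>\frac14$; the condition $p_{ij}>0$ forces $x_ix_j>\frac{1}{4\sqrt m}$, and then Cauchy--Schwarz turns the weighted page sum into $|B'(i,j)|\ge\bigl(\sum_{k\in B'}x_k\bigr)^2>\frac{1}{144}\sqrt m$ using only $\sum_k x_k^2\le 1$ --- no pointwise lower bound on the $x_k$ is ever needed, which is the point your plan cannot get past.

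The upper-bound construction is also not right as sketched. A book spine $B_k$ with $k\sim\frac13\sqrt m$ and $m$ dominated by sparse padding has $\lambda=O(m^{1/4})\ll\sqrt m$, while a $K_{s,t}$-plus-an-edge gadget tuned to be Nosal gives booksize about $\frac12\sqrt m$ (Example \ref{exam-KST}), not $\frac13\sqrt m$. The constant $\frac13$ in the paper comes from a different example: the blow-up of the triangular prism $C_3^{\square}$ with the three vertices of one triangle blown up to independent sets of size $k+1$ and the other three to size $k-1$, which has $m=9k^2+3$, is Nosal because $m>n^2/4$, and has booksize $k+1\le(\frac13+o(1))\sqrt m$.
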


In 1941, Tur\'{a}n \cite{Bol1978} extended Mantel's theorem, showing that if $G$ is a $K_{r+1}$-free graph on $n$ vertices, then $e(G)\le (1-\frac{1}{r})\frac{n^2}{2}$, with equality if and only if $r$ divides $n$ and $G=T_{n,r}$. In 1986, Wilf  \cite{Wil1986} proved a spectral extension that if $G$ is a $K_{r+1}$-free graph on $n$ vertices, then $\lambda (G) \le \left(1-\frac{1}{r }\right)n$, and equality holds if and only if $r$ divides $n$ and $G=T_{n,r}$. In 2002, Nikiforov \cite{Niki2002} showed a further extension that 
if $G$ is a $K_{r+1}$-free graph with $m$ edges,  then $\lambda^2 (G) \le 
 \left( 1-\frac{1}{r}\right)2m$, and
the extremal graphs were later characterized in \cite{Niki2006}; they are complete bipartite graphs for $r=2$, 
or regular complete $r$-partite graphs for $r\ge 3$. 

The \emph{generalized book} $B_{r,k}=K_r \vee I_k$ is a graph obtained  
 by joining every vertex of a clique $K_r$
to every vertex of an independent set $I_k$ of size $k$. Naturally, one wonders whether Conjecture \ref{conj-ZLS} can be extended to generalized books. This problem was proposed in \cite[Conjecture 1.20]{LLF2022}.

\begin{conj}[Li--Liu--Feng \cite{LLF2022}] \label{conj-B-rk}
Let $r\ge 2, k\ge 1$ be fixed and $m$ be large enough.  
If $G$ is a $B_{r,k}$-free graph with $m$ edges, 
then  $\lambda^2 (G)\le \left( 1-\frac{1}{r} \right) 2m.$ 
\end{conj}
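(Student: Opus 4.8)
The plan is to establish \Cref{conj-B-rk} in the following stronger, existence form: for every $r\ge 2$ and $k\ge 1$ there is $m_0=m_0(r,k)$ such that every graph $G$ with $m\ge m_0$ edges and $\lambda(G)^2>(1-\tfrac1r)2m$ contains a copy of $B_{r,k}$. (This is the contrapositive of \Cref{conj-B-rk}, and since $B_{r,1}=K_{r+1}$ and $B_{r,k}\supseteq B_{r,1}$ it strengthens both \Cref{conj-B-rk} and Nikiforov's theorem.) I would prove it by induction on $r$. The base case $r=2$ is immediate from \Cref{thm-booksize}: if $\lambda(G)^2>m$, i.e. $G$ is Nosal, and $m\ge(144k)^2$, then $bk(G)>\tfrac1{144}\sqrt m\ge k$, so $G$ contains a book of size $k$, which is exactly $B_{2,k}$.

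For the inductive step ($r\ge 3$) the key ingredient is a \emph{spectral link lemma}: if $\lambda(G)^2>(1-\tfrac1r)2m$ with $m$ large, then $G$ has a vertex $v$ whose neighbourhood graph $L:=G[N(v)]$ satisfies both $\lambda(L)^2>(1-\tfrac1{r-1})2e(L)$ and $e(L)\ge m_0(r-1,k)$. Granting this, the induction hypothesis applied to $L$ produces a copy of $B_{r-1,k}=K_{r-1}\vee I_k$ inside $L\subseteq G[N(v)]$; since $v$ is adjacent to every vertex of $L$, appending $v$ yields $v\vee(K_{r-1}\vee I_k)=K_r\vee I_k=B_{r,k}$ in $G$. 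Equivalently, iterating the lemma $r-2$ times peels off a clique $v_1,\dots,v_{r-2}$ whose common neighbourhood is a large Nosal graph, in which \Cref{thm-booksize} locates a book $B_k$, and then $\{v_1,\dots,v_{r-2}\}\vee B_k\supseteq B_{r,k}$.

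So everything reduces to the spectral link lemma, and this is where I expect the real difficulty to lie — and where the two structural results advertised in the abstract are needed. Crude estimates relating $\lambda(G)$ to the spectral radii of its neighbourhood graphs — via the eigenvalue equation or by counting short walks — only yield $\lambda(G)^2\le(1-\tfrac1r)2m+O_r(\sqrt m)$ when \emph{every} link of $G$ lies below the $(r-1)$-threshold; this additive $O_r(\sqrt m)$ error is essentially the reason earlier arguments obtained books of size $m^{1/4}$ rather than $m^{1/2}$. Removing it forces one to describe the graphs whose spectral radius only barely exceeds $\sqrt{(1-1/r)2m}$: the maximum-degree structural result constrains $\Delta(G)$, so that either the link of a maximum-degree vertex is itself forced above the $(r-1)$-threshold and has $\Omega_r(m)$ edges, or $\Delta(G)$ is moderate and the structural result on large structured subgraphs extracts an almost-complete $r$-partite subgraph carrying almost all the edges, in which the link of a vertex in one of the smallest parts visibly beats the $(r-1)$-threshold with $\Omega_r(m)$ edges. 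Reconciling these two regimes, handling the genuinely tight case where $\lambda(G)^2-(1-1/r)2m$ is tiny, and maintaining enough edges in the link to re-enter the induction, is the technical core of the proof.
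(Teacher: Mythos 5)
Your base case is fine, but the inductive step rests entirely on the ``spectral link lemma'' (existence of a vertex $v$ with $\lambda(G[N(v)])^2>(1-\tfrac{1}{r-1})2e(G[N(v)])$ and $e(G[N(v)])$ large), and you never prove it --- you only explain why crude walk-counting estimates fail and gesture at the structural theorems from the abstract. That is a genuine gap, not a deferrable technicality: the lemma is the whole content of the induction, it is not a known result, and the route you sketch for it does not go through with what is available. \Cref{lem:Nosal-max-degree} and \Cref{lem:Nosal-structure} are stated and proved only for Nosal graphs, i.e.\ for the threshold $\lambda>\sqrt{m}$ (the case $r=2$); nothing in the paper gives a maximum-degree bound or a bipartite/small-degree dichotomy for graphs with $\lambda^2>(1-\tfrac1r)2m$ when $r\ge 3$, and in the paper those two theorems are used exclusively for the $C_4$-counting result (\Cref{thm:lim}), not for generalized books. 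Your ``two regimes'' discussion also leaves unaddressed exactly the case you flag yourself, where $\lambda^2-(1-\tfrac1r)2m$ is tiny, which is the case one must win to get the clean inequality of \Cref{conj-B-rk} with no error term. As it stands, the proposal reduces the conjecture to an unproved statement of comparable difficulty.

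For comparison, the paper does not induct on $r$ and never passes to links. It proves \Cref{cor:joints} (which contains \Cref{conj-B-rk}) via \Cref{thm-jsr+1}: a weighted version of Erd\H{o}s's joint theorem (\Cref{lem:joints-Blowup}), established by a random blowup argument together with the Kim--Vu polynomial concentration inequality, is applied with vertex weights $x_i^2$ from the Perron--Frobenius eigenvector and carefully engineered edge weights $p_{ij}$; this yields an edge lying in $\Omega_r(m^{(r-1)/2})$ copies of $K_{r+1}$. The Kruskal--Katona theorem then bounds the number of $K_r$'s through that edge by $O_r(m^{(r-2)/2})$, and a double count over pairs (an $r$-clique inside an $(r+1)$-clique through the fixed edge) forces $\Omega_r(\sqrt m)$ copies of $K_{r+1}$ sharing a common $K_r$, i.e.\ a $B_{r,k}$ with $k=\Omega_r(\sqrt m)$. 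If you want to salvage your plan, you would need to actually formulate and prove a link lemma at the $(1-\tfrac1r)$-threshold; with the paper's toolkit the joint-plus-Kruskal--Katona route is the one that works.
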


Conjecture \ref{conj-B-rk} gives a unified 
extension on the spectral extremal results for triangles, books and cliques. 
Our next result resolves Conjecture \ref{conj-B-rk} in a strong sense, determining the correct order of largest generalized books guaranteed in graphs with large spectral radii.

\begin{theorem}
    \label{cor:joints}
    Every $m$-edge graph $G$ with $\lambda^2(G) > \left(1 - \frac{1}{r} \right)  2m$ contains a copy of $B_{r, k}$ of size $k = \Omega_r(\sqrt{m})$. 
    Furthermore, there are such graphs with largest generalized booksize $O_r(\sqrt{m})$.
\end{theorem}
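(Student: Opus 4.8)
The plan is to reduce Theorem~\ref{cor:joints} to the book case (Theorem~\ref{thm-booksize}) via a clique-removal / iteration argument, using the structural results on maximum degree and large structured subgraphs advertised in the abstract. Here is the approach in more detail.

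\textbf{Step 1: Extract a dense substructure.} First I would invoke the promised structural result to find, inside $G$, either a vertex of very large degree or a large ``nice'' subgraph on which the spectral radius is still essentially $\sqrt{(1-1/r)2m}$. The goal is to produce a vertex $v$ together with many neighbours spanning a subgraph whose spectral radius exceeds the Nosal-type threshold for $K_r$. Concretely, I expect to show that $G$ contains a vertex $v$ such that the subgraph $H = G[N(v)]$ induced on its neighbourhood satisfies $\lambda^2(H) > (1-\tfrac{1}{r-1}) \, 2 e(H)$ and $e(H) = \Omega_r(m)$. The point of such a claim: a $K_r$ in $H$ together with $v$ is a $K_{r+1}$, and more relevantly, a $B_{r-1,k}$ inside $H$ together with $v$ is a $B_{r,k}$.

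\textbf{Step 2: Induct on $r$.} With Step~1 in hand, the statement follows by induction on $r$. The base case $r=2$ is exactly Theorem~\ref{thm-booksize}: a $B_{2,k}$ is a book $B_k$, and $\lambda^2(G) > m$ is the Nosal condition, so $k = \Omega(\sqrt m) = \Omega(\sqrt{e(H)})$ with the right dependence on $e(H) = \Omega_r(m)$. For the inductive step, apply the inductive hypothesis to $H$ (with parameter $r-1$ and $e(H) = \Omega_r(m)$ edges) to obtain a $B_{r-1,k}$ in $H$ with $k = \Omega_{r-1}(\sqrt{e(H)}) = \Omega_r(\sqrt m)$; joining $v$ to all of it yields the desired $B_{r,k}$.

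\textbf{Step 3: The lower-bound construction.} For the ``furthermore'' part I would exhibit an $m$-edge graph with $\lambda^2 > (1-\tfrac1r)2m$ but no generalized book $B_{r,k}$ with $k$ super-$\sqrt m$. The natural candidate is a blow-up / join construction: take a regular complete $r$-partite graph (or complete bipartite graph when $r=2$, as in the extremal graphs from \cite{Niki2006}) of the appropriate size, and glue on a small gadget — e.g.\ add a pendant structure or slightly perturb one part — that pushes the spectral radius just above $\sqrt{(1-1/r)2m}$ while keeping every common-neighbourhood of an $r$-clique of size $O_r(\sqrt m)$. Alternatively, mimic the extremal construction behind the sharpness clause of Theorem~\ref{thm-booksize} and take its join with $K_{r-2}$; one then checks that $\lambda$ scales correctly and that the largest $B_{r,k}$ has $k = O_r(\sqrt m)$ by a direct count of common neighbours.

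\textbf{Main obstacle.} The crux is Step~1: converting the global spectral hypothesis $\lambda^2(G) > (1-\tfrac1r)2m$ into a \emph{local} spectral surplus on a neighbourhood that is itself large (linear in $m$). Naively passing to $G[N(v)]$ for the top-eigenvector-weighted vertex loses too much — the induced subgraph could be sparse or have spectral radius well below the $K_r$-threshold. I expect this to require the ``large structured subgraph'' machinery referenced in the abstract: one likely first cleans $G$ to bound its maximum degree by $O(\sqrt m)$ (so that eigenvector weight is spread out), then uses a weighted counting / entropy-style argument on walks to locate a vertex whose neighbourhood carries the excess spectral mass. Handling the case of a single dominating high-degree vertex separately — where the book is essentially a star neighbourhood and the bound is immediate — should dispose of the remaining case.
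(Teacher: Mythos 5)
Your plan hinges entirely on Step~1, and that step is precisely what you have not proved; as written it is a genuine gap, not a routine reduction. You need, from the global hypothesis $\lambda^2(G) > (1-\tfrac1r)2m$, a single vertex $v$ with $e(G[N(v)]) = \Omega_r(m)$ \emph{and} a local spectral surplus $\lambda^2(G[N(v)]) > (1-\tfrac{1}{r-1})2e(G[N(v)])$. In the density world the analogous localization is the standard Moon--Moser-type induction, but under an edge-spectral hypothesis nothing like it is available off the shelf: the trace/triangle bound only gives $\sum_v e(G[N(v)]) = 3t(G) = \Omega_r(m^{3/2})$, which spread over up to $\Theta(m)$ non-isolated vertices is consistent with every neighbourhood having $o(m)$ edges, so no averaging argument delivers the claim. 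Moreover, the tools you invoke do not supply it: Theorems \ref{lem:Nosal-max-degree} and \ref{lem:Nosal-structure} concern the threshold $\lambda > \sqrt m$ and are used in the paper only for the $C_4$ count; the maximum-degree bound there is $m/2 + m^{0.99}$, and your hoped-for cleaning step ``bound $\Delta(G)$ by $O(\sqrt m)$'' cannot work as stated, since graphs satisfying $\lambda^2 > (1-\tfrac1r)2m$ can have vertices of degree $\Theta(m)$ (attach a pendant star with $\Theta(m)$ leaves to a clique on $\sqrt{(1-\tfrac1r)2m}$ vertices, as in Example \ref{exam-booook}/\ref{exam-C4-1}), and removing or modifying such vertices while preserving the spectral condition is itself a delicate matter. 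In short, the inductive skeleton and the base case are fine, but the entire difficulty of the theorem has been pushed into an unproven localization lemma that is at least as hard as the statement being proved.

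For contrast, the paper avoids any such localization. It first proves a joints theorem (Theorem \ref{thm-jsr+1}): under the same hypothesis there is an \emph{edge} $\{u,v\}$ lying in $\Omega_r(m^{\frac{r-1}{2}})$ copies of $K_{r+1}$. This is done by a weighted blow-up argument generalizing the proof of Theorem \ref{thm-booksize} (Perron-weighting of vertices, a carefully engineered edge weighting, and Kim--Vu polynomial concentration in place of Hoeffding). It then converts the edge-joint into a generalized book purely combinatorially: by Kruskal--Katona, $\{u,v\}$ lies in only $O_r(m^{\frac{r-2}{2}})$ copies of $K_r$, so double counting pairs $(K_r, K_{r+1})$ through $\{u,v\}$ forces some $r$-clique to lie in $\Omega_r(\sqrt m)$ copies of $K_{r+1}$, i.e.\ a $B_{r,k}$ with $k=\Omega_r(\sqrt m)$. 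Your Step~3 construction is essentially the paper's (a regular complete $r$-partite graph with one added edge, or $K_s\circ H$ with $s=\sqrt{(1-\tfrac1r)2m}+1$), so the ``furthermore'' part of your plan is sound, but the existence half requires a completely different mechanism from the one you sketch, or else an actual proof of your Step~1 claim.
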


As an application of Theorem \ref{cor:joints}, we can easily obtain that if $G$ is an $n$-vertex graph with the following condition: either $m> (1-\frac{1}{r})\frac{n^2}{2}$ edges or 
$\lambda (G)> (1-\frac{1}{r})n$, 
then $G$ contains a copy of $B_{r,k}$ 
with $k=\Omega_r(n)$. This can be regarded as an extension of a result of Zhai and Lin \cite{ZL2022jgt}, in which they proved that if $\lambda (G) > \lambda (T_{n,2})$, then $G$ contains a book $B_{2,k}$ with $k=\Omega(n)$. 

\subsection{Quadrilaterals and structures of Nosal graphs} 
We now turn our attention to supersaturations for bipartite graphs. Note that the star $K_{1,m}$ satisfies $\lambda(K_{1,m})=\sqrt{m}$ and does not contain any $4$-cycle.
A result of Nikiforov \cite{Niki2009} states that if $G$ is a Nosal graph with $m\ge 10$ edges, then $G$ contains a $4$-cycle; see, e.g., \cite{ZLS2021,ZS2022dm}. 
 Recently, Ning and Zhai \cite{NZ2021b} proved a corresponding supersaturation result, showing that if $G$ is a Nosal graph with $m\ge 3.6\times 10^9$ edges, then 
$G$ has at least $m^2/2000$ copies of $C_4$. Let 
\[ f(m)= \min\{\text{$\#C_4$ in $G$}: e(G)=m \text{~and~} 
\lambda (G) > \sqrt{m}\}\] 
be the number of $C_4$ guaranteed in every $m$-edge Nosal graph. The result of Ning and Zhai \cite{NZ2021b}  can be written as 
$f(m) > m^2/2000.$
Ning and Zhai proposed the following problem.

\begin{problem}[Ning--Zhai \cite{NZ2021b}]
    Determine the limit $\lim\limits_{m\to \infty} \frac{f(m)}{m^2}$. 
\end{problem}

We point out that  
it is possible to improve the constant $1/2000$ by a similar argument as in \cite{NZ2021b}. 
However, it appears difficult to determine the limit exactly using Ning--Zhai's approach. 
We resolve 
this problem using a different approach. 

\begin{theorem} \label{thm:lim}
Every $m$-edge Nosal graph has at least $( \frac{1}{8} - o(1)) m^2$ copies of $C_4$. Furthermore, the constant $\frac{1}{8}$ is optimal. In other words, we have $\lim\limits_{m\to \infty} \frac{f(m)}{m^2} 
= \frac{1}{8}.$ 
\end{theorem}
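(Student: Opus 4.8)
\textbf{Proof proposal for Theorem \ref{thm:lim}.}

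The plan is to obtain the lower bound $f(m) \ge (\tfrac18 - o(1))m^2$ by combining a counting identity for $4$-cycles with the spectral hypothesis, and to establish optimality by producing two explicit families of Nosal graphs with $(\tfrac18 + o(1))m^2$ copies of $C_4$. For the lower bound, I would start from the standard fact that the number of (labelled) closed walks of length $4$ equals $\operatorname{tr}(A^4) = \sum_i \lambda_i^4$, and that this counts, up to lower-order terms, the quantity $\sum_{u,v} \binom{\operatorname{codeg}(u,v)}{2}$ together with contributions from degrees; more usefully, $\#C_4 = \tfrac18\big(\operatorname{tr}(A^4) - 2\sum_v d_v^2 + 2\sum_v d_v\big) \cdot$(appropriate normalisation), so it suffices to lower bound $\sum_i \lambda_i^4$ while controlling $\sum_v d_v^2$. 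Since $\lambda_1 = \lambda(G) > \sqrt{m}$ we get $\lambda_1^4 > m^2$, but the degree term $\sum_v d_v^2$ can be as large as roughly $2m^{3/2}$ only if there is a vertex of degree $\approx m^{1/2}$ — and here is where the structural input enters. I expect the paper's first structural result (the bound on the maximum degree of Nosal graphs) to guarantee that after removing a bounded number of high-degree vertices, or on the relevant subgraph, $\Delta(G)$ and hence $\sum_v d_v^2 = o(m^2)$, so that $8\,\#C_4 \ge \lambda_1^4 - o(m^2) \ge m^2 - o(m^2)$, giving the claimed bound.

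The main obstacle is precisely handling the degree term: the trace identity gives $8\,\#C_4 = \sum_i \lambda_i^4 - 2\sum_v d_v^2 + \text{(lower order)}$, and a star $K_{1,m}$ has $\sum_v d_v^2 = m^2 + m$ which exactly cancels $\lambda_1^4 = m^2$, producing zero $4$-cycles — consistent with $K_{1,m}$ not being Nosal (it has $\lambda = \sqrt m$, not $> \sqrt m$). So the argument must quantitatively exploit the \emph{strict} inequality $\lambda(G) > \sqrt m$ together with the structure theorems to show that whenever $\sum_v d_v^2$ is close to $m^2$ (i.e. there is a near-dominating vertex), the spectral radius cannot exceed $\sqrt m$ — equivalently, any Nosal graph is "far" from a star in the sense that $\sum_v d_v^2 \le (1-c)m^2$ or $\Delta(G) \le (1-c)\sqrt m$ is ruled out, but more delicately we need it to be $o(m^2)$ or to trade off against $\lambda_1^4$ growing correspondingly. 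I would use the structural result stated in the abstract guaranteeing a large structured subgraph: pass to a subgraph $H \subseteq G$ with $e(H) = (1-o(1))m$, $\lambda(H) \ge \sqrt{m}(1 - o(1))$ and $\Delta(H) = o(\sqrt m)$ (or bounded by $Cm^{1/2}$ with small $C$ only at few vertices), apply the trace bound to $H$, and note $\#C_4(G) \ge \#C_4(H)$.

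For optimality I would exhibit two constructions achieving $(\tfrac18+o(1))m^2$. The first natural candidate is a complete bipartite-like graph $K_{2,t}$ blown up or, more likely, the graph $K_{1,1,t}$ or a friendship-type / near-regular bipartite graph: for $K_{s,t}$ with $st = m$ one has $\#C_4 = \binom{s}{2}\binom{t}{2} \approx \tfrac14 s^2 t^2 = \tfrac14 m^2$ when $s,t \to \infty$, which is too large, so the extremal example must be very unbalanced — take $s$ fixed growing slowly, or rather take the graph to be $S_{n,k}$-type (a clique on a few vertices joined to an independent set) tuned so that $\lambda$ just exceeds $\sqrt m$ while $\#C_4$ is minimised; the two constructions presumably are (i) a book-like graph $K_2 \vee I_t$ plus pendant structure and (ii) a "double star" or $K_{1,a} \cup K_{1,b}$ with a connecting edge, each of which I would check has $\lambda^2$ marginally above $m$ and exactly $(\tfrac18+o(1))m^2$ quadrilaterals by direct computation of $\operatorname{codeg}$ pairs. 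The routine part is then verifying $\lambda(G) > \sqrt m$ for these via interlacing or an explicit eigenvector, and counting $C_4$'s by summing $\binom{\operatorname{codeg}(u,v)}{2}$; the delicate part, again, is matching the constant $\tfrac18$ on the nose, which forces the construction to be the extremal graph for Nosal's inequality perturbed minimally.
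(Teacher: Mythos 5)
Your overall frame (the trace identity $\#C_4=\tfrac18\bigl(\mathrm{tr}(A^4)+2m-2\sum_v d_v^2\bigr)$ plus structural control of the degree term) is the right starting point and matches the paper's, but the way you propose to control $\sum_v d_v^2$ has a genuine gap. You plan to pass to a subgraph $H$ with $\lambda(H)\ge(1-o(1))\sqrt m$ and $\Delta(H)=o(\sqrt m)$ (or otherwise force $\sum_v d_v^2=o(m^2)$). No such reduction exists: the extremal example $K_2\vee \tfrac{m-1}{2}K_1$ is Nosal, every subgraph of it retaining spectral radius $(1-o(1))\sqrt m$ must keep essentially the whole $K_{2,t}$ with $t\approx m/2$, hence has two vertices of degree $\Theta(m)$ and $\sum_v d_v^2=(\tfrac12+o(1))m^2$, which is \emph{not} $o(m^2)$; and with only $\lambda_1^4\ge m^2$ the trace identity then gives $\tfrac18 m^2-\tfrac14\cdot\tfrac12 m^2<\tfrac18 m^2$, in fact $0$, so your estimate collapses exactly on an extremal configuration. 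The paper's maximum-degree theorem says only $\Delta(G)\le \tfrac m2+m^{0.99}$ (so $M(G)\le(\tfrac12+o(1))m^2$), not $o(m^2)$, and its real role is to cap the degree term at $m^2/2$, not to kill it. The missing idea is the structural dichotomy (Theorem \ref{lem:Nosal-structure}): one finds a subgraph $G'$ with $\lambda(G')\ge(1-\varepsilon)\sqrt m-O_\varepsilon(1)$ that is either bipartite or has $\Delta(G')\le\varepsilon m$. In the bipartite case the spectrum is symmetric, so $\lambda_{\min}(G')\le-(1-\varepsilon-o(1))\sqrt m$ and $\sum_i\lambda_i^4\ge(2(1-\varepsilon)^4-o(1))m^2$; this factor-of-two gain, combined with $M(G')\le(\tfrac12+o(1))m^2$ from the max-degree theorem, is what yields $(\tfrac18-O(\varepsilon))m^2$. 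In the small-degree case $\Delta(G')\le\varepsilon m$ one gets $M(G')\le\varepsilon m^2+o(m^2)$ and $\lambda_1^4$ alone suffices. Your proposal contains neither the bipartite branch nor the use of the $\Delta\le m/2$ bound, and these are exactly what is needed to handle the two very different extremal structures.

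On optimality: your guess $K_2\vee I_t$ (an edge joined to an independent set of size $\approx m/2$) is indeed one of the paper's constructions and, after verifying $\lambda=\tfrac{1+\sqrt{4m-3}}{2}>\sqrt m$ and $\#C_4=\binom{(m-1)/2}{2}\approx\tfrac18 m^2$, it alone establishes the upper bound, so that part of your plan is salvageable. Your second candidate, a double star with a connecting edge, is a tree and contains no $C_4$ at all (and is not Nosal), so it cannot serve; the paper's other construction is instead a clique $K_s$ with $s\approx\sqrt m+1$ plus $\approx m/2$ pendant edges, giving $3\binom{s}{4}\approx\tfrac18 m^2$ quadrilaterals.
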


The key ingredients and bulk of the proof of~Theorem \ref{thm:lim} are two structural results we obtain for Nosal graphs, which we believe to be of independent interest and may have further applications.

The first one bounds the maximum degree of a Nosal graph.
\begin{theorem}
        \label{lem:Nosal-max-degree}
    If $G$ is an $m$-edge Nosal graph, 
    then for sufficiently large $m$,  
    \[ \Delta(G) \leq \frac{m}{2} + m^{0.99}.\] 
\end{theorem}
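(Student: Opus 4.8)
The plan is to argue by contradiction: suppose $G$ is an $m$-edge Nosal graph with a vertex $u$ of degree $d = \Delta(G) > m/2 + m^{0.99}$, and derive a bound $\lambda(G) \le \sqrt{m}$, contradicting the Nosal hypothesis. First I would split the edge set of $G$ according to $u$: there are $d$ edges incident to $u$, and the remaining $m - d$ edges lie in $G - u$. Crucially, since $d > m/2$, we have $e(G - u) = m - d < m/2 < d$, so the neighbourhood of $u$ is ``edge-heavy'' while the rest of the graph is sparse. I would then estimate $\lambda(G)$ via the Rayleigh quotient. Let $\V{x}$ be the Perron eigenvector of $G$ normalised so that $\|\V{x}\|_2 = 1$, and write $x_v$ for its entries. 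The eigenvalue equation gives $\lambda x_u = \sum_{v \sim u} x_v$, so by Cauchy--Schwarz $\lambda^2 x_u^2 \le d \sum_{v \sim u} x_v^2$. The contribution of edges not at $u$ to the quadratic form $\V{x}^\top A \V{x} = \lambda$ is at most $2 e(G-u) \cdot \max_{v \ne u} x_v^2 \le 2(m-d)$ after a trivial bound; combining the two pieces should give roughly $\lambda \le 2 x_u \sqrt{d \sum_{v\sim u} x_v^2} + 2(m-d)$, and one then optimises.

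More carefully, I expect the clean route is: write $\lambda = \V{x}^\top A \V{x} = 2x_u \sum_{v\sim u} x_v + \V{y}^\top A' \V{y}$ where $\V{y}$ is $\V{x}$ restricted to $V(G) \setminus \{u\}$ and $A'$ is the adjacency matrix of $G - u$. Then $\V{y}^\top A' \V{y} \le \lambda(G-u)\|\V{y}\|_2^2 \le \sqrt{e(G-u)} \cdot \|\V{y}\|_2^2 = \sqrt{m-d}\,(1 - x_u^2)$ by Nosal's theorem applied to $G-u$ (which is triangle-free-free — no, rather we just use $\lambda(H) \le \sqrt{e(H)}$ unconditionally, which holds for all graphs). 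And $2x_u \sum_{v \sim u} x_v \le 2 x_u \sqrt{d}\,\sqrt{\sum_{v\sim u} x_v^2} \le 2x_u\sqrt{d}\sqrt{1-x_u^2}$. Setting $t = x_u^2 \in [0,1]$, we get
\[
\lambda \le 2\sqrt{d}\,\sqrt{t(1-t)} + \sqrt{m-d}\,(1-t).
\]
I would maximise the right-hand side over $t \in [0,1]$; calculus shows the maximum is of the form $\sqrt{d + O(m-d)} \le \sqrt{d} + O\!\big(\tfrac{m-d}{\sqrt d}\big)$ when $m - d$ is small relative to $d$. Since $d \le m$, we have $m - d \ge 0$, but the point is that if $d$ is close to $m$ then $m-d$ could still be up to $m/2$... so this crude bound alone gives only $\lambda^2 \lesssim d + O(\sqrt{(m-d)d}) + (m-d)$, which is $\le m$ precisely when the cross term is controlled.

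The main obstacle, and where more care is needed, is handling the regime where $m - d$ is not negligible — i.e. closing the gap between the naive estimate $\lambda^2 \lesssim m + (\text{error})$ and the required $\lambda^2 \le m$. The bound $\lambda(G-u) \le \sqrt{m-d}$ is too lossy here; I expect one must instead use a sharper spectral estimate on $G - u$ that exploits that most edges of $G - u$ lie \emph{inside} $N(u)$ versus \emph{outside}, or bound $\sum_{v\sim u} x_v^2$ better than by $1$ using that high-degree vertices of $G$ other than $u$ also satisfy a degree bound (bootstrapping). An alternative is to iterate: if $\Delta(G) > m/2 + m^{0.99}$, first delete $u$, note $G' = G - u$ has $m' = m - d < m/2$ edges and $\Delta(G') \le d' $; relate $\lambda(G)$ to $\lambda(G')$ and the star at $u$ via an eigenvalue interlacing or a direct two-term Rayleigh argument, and use the slack $m^{0.99}$ to absorb lower-order error terms (this is exactly what the $m^{0.99}$ in the statement is for — it is not tight, just a convenient polynomial slack). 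I would structure the final write-up around choosing the test vector $\V{x}$ supported on $\{u\} \cup N(u)$ with $x_u$ a free parameter and $x_v$ constant on $N(u)$, compute the Rayleigh quotient exactly in terms of $e(N(u))$ and $d$, and show it exceeds $\sqrt{m}$ whenever $d$ is too large — turning the contradiction around so that a large $\Delta$ forces $\lambda > \sqrt m$ is impossible only if... wait, that's the wrong direction; rather, the correct contradiction is that such a test vector would be needed to certify $\lambda > \sqrt m$, and showing no configuration with $\Delta > m/2 + m^{0.99}$ and $\lambda > \sqrt m$ can coexist because the edge budget $m$ is too small to support both. The honest summary: the inequality $\lambda^2 \le d + 2\sqrt{d\cdot e(G-u)} + e(G-u)$-type bounds must be pushed to second order, and the slack $m^{0.99}$ over $m/2$ gives exactly the room to do so.
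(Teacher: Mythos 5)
There is a genuine gap, and it sits exactly at the point you yourself flag as "the main obstacle". Your decoupling bound, even optimized exactly, cannot work: writing $t=x_u^2$ and maximizing $2\sqrt{d}\sqrt{t(1-t)}+\sqrt{m-d}\,(1-t)$ over $t$ gives $\lambda \le \tfrac{1}{2}\sqrt{m-d}+\sqrt{d+\tfrac{m-d}{4}}$, and in the critical regime $d\approx m/2$ this evaluates to about $1.14\sqrt{m}$ — nowhere near the needed contradiction with $\lambda>\sqrt{m}$. Worse, the theorem has essentially no slack to absorb such losses: the graph $K_2\vee\frac{m-1}{2}K_1$ (Example \ref{exam-C4-2}) is Nosal with $\Delta\approx m/2$ and $\lambda=\sqrt{m}+O(1)$, so any correct argument must be sharp to second order in the regime $d\approx m/2$ and must exploit how the edges of $G-u$ and the eigenvector mass are distributed relative to $N(u)$, not just the counts $d$ and $m-d$. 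Your proposed repairs ("sharper spectral estimate on $G-u$", bootstrapping, iteration, a test vector supported on $\{u\}\cup N(u)$) are left unspecified, and the last one points the wrong way — test vectors only certify lower bounds on $\lambda$, so they cannot rule out $\lambda>\sqrt{m}$.

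For comparison, the paper's proof does not delete the high-degree vertex but first shifts edges so that the vertex of maximum eigenvector entry becomes universal (this preserves the Nosal property and does not decrease $\Delta$). It then eliminates $x_v$ from the eigenvalue inequality by AM--GM, reducing everything to an inequality on $G'=G-v$ with $|V'|=k$ and $|E'|=m-k$; the edge terms are bounded via the weighted substitution $2x_ix_j\le \frac{\sqrt{d_j'+1}}{\sqrt{d_i'+1}}x_i^2+\frac{\sqrt{d_i'+1}}{\sqrt{d_j'+1}}x_j^2$, leading to quantities $q_i$, and Cauchy--Schwarz inverts the inequality into $\sum_{i\in V'}\frac{1}{\sqrt{m}-q_i}>\sqrt{m}$. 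The contradiction is then obtained by a careful case split on vertex degrees (threshold $m^{0.25}$) together with a discharging argument over the edges of $G'$. This fine-grained control of the $q_i$, which encodes precisely the interaction your decoupling throws away, is the missing core of the argument.
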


One difficulty in proving Theorem \ref{lem:Nosal-max-degree} comes from the fact that  $K_{1, m}$ has spectral radius exactly equal to $\sqrt{m}$, so one needs to show that tweaking $K_{1, m}$ can only decrease the spectral radius. Another difficulty lies in the fact that there are two very different looking graphs (see Examples \ref{exam-C4-1} and \ref{exam-C4-2}) which are both extremal graphs up to a lower order term. We remark that the lower order term $m^{0.99}$ is far from optimal, but it is sufficient for our purpose.

The second one is a structural dichotomy for Nosal graphs. It says that any Nosal graph has a large subgraph that is either bipartite or has maximum degree $o(m)$. For instance, in Example \ref{exam-C4-1} the clique has maximum degree $o(m)$, and the graph in Example \ref{exam-C4-2} can be made bipartite by removing a single edge. 

\begin{theorem}
    \label{lem:Nosal-structure}
    For any $\varepsilon > 0$, there exists a constant $N(\varepsilon)$ such that the following holds. If $G$ is an $m$-edge Nosal graph, then there exists a subgraph $G'$ of $G$ with 
    \[ \lambda(G') > (1 - \varepsilon) \sqrt{m} - N(\varepsilon) \]
    such that one of the following holds: 
    \begin{itemize}
        \item[\rm (a)] $G'$ is bipartite; 
        \item[\rm (b)] $G'$ has maximum degree at most $\varepsilon m$.
    \end{itemize}
\end{theorem}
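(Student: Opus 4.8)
The plan is to run a stability-type argument driven by the eigenvector of $G$. Let $\V{x}$ be the Perron eigenvector of $G$ normalised so that $\max_v x_v = 1$, say attained at a vertex $u$, and recall that $\lambda(G)^2 > m$. The starting point is the standard identity $\lambda^2 x_u = \sum_{v \sim u} \sum_{w \sim v} x_w$, which counts weighted walks of length two from $u$. The key dichotomy will come from asking whether the neighbourhood $N(u)$ "behaves like" one side of a bipartition — i.e.\ whether there are few edges inside $N(u)$ — or not. If $e(G[N(u)])$ is large, then $u$ together with $N(u)$ contains many triangles through $u$, which (as in the book-counting arguments of Nikiforov and of Theorem~\ref{thm-booksize}) forces a large book $B_{\ell}$; the common edge of that book, together with its $\ell$ common neighbours, is a $K_{2,\ell}$, and I would then argue that removing a bounded number of high-weight vertices leaves a subgraph still with spectral radius close to $\sqrt m$ but with much smaller maximum degree, landing in case~(b). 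If instead $e(G[N(u)])$ is small, I want to extract a large bipartite subgraph; the natural candidate is the bipartite subgraph between $N(u)$ and $V(G)\setminus N(u)$ (or a max-cut refinement thereof), and the Rayleigh quotient of $\V x$ restricted appropriately should show its spectral radius is at least $(1-\eps)\sqrt m$ up to an additive constant.

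More precisely, here are the steps I would carry out. (1) Fix $\eps$, set a threshold $\delta = \delta(\eps)$ and a "high-weight vertex set" $S = \{v : x_v \ge \delta\}$; using $\lambda^2 = \V x^\top A^2 \V x \le \sum_v \lambda^2 x_v^2$ type bounds one controls $|S|$ in terms of $\delta$ and $m$, so $|S| = O_\eps(1)$ is \emph{not} quite true but $|S|$ times $\delta^2$ is $O(1)$, which is the form I actually need. (2) Split into the two cases according to whether $G$ has maximum degree $\le \eps m$ — if so we are trivially in case~(b) with $G' = G$ — or $\Delta(G) > \eps m$, in which case by Theorem~\ref{lem:Nosal-max-degree} the max degree is actually between $\eps m$ and $\tfrac m2 + m^{0.99}$, so there is essentially \emph{one} vertex $u$ of very large degree, and this is the regime where $K_{1,m}$-like and $C_4$-counting heuristics apply. (3) In that regime, consider the bipartition $(N(u), V\setminus N(u))$ and let $G'$ be the bipartite subgraph of $G$ induced by the edges across this cut, possibly after a greedy local-switching improvement of the cut to absorb the edges inside $N(u)$. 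Estimating $\lambda(G') \ge \V y^\top A(G') \V y / \|\V y\|^2$ for a suitably truncated version $\V y$ of $\V x$ (zeroing out the $O_\eps(1)$ heaviest vertices and the low-weight tail), the only contributions lost are (i) edges inside $N(u)$, whose eigenvector-weighted mass is small precisely when $\Delta$ is close to $m/2$ — this is where the tightness of Theorem~\ref{lem:Nosal-max-degree} is used — and (ii) a bounded additive loss from the truncation, giving the $-N(\eps)$ term. If the weighted mass of edges inside $N(u)$ is \emph{not} small, one instead concludes a large book through $u$ and removes $u$ and a few other heavy vertices to enter case~(b) with a controlled drop in $\lambda$.

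The main obstacle, I expect, is making the bipartite case quantitatively tight: one must show that \emph{either} the edges inside $N(u)$ carry negligible eigenvector weight (so the cross-cut subgraph keeps almost all of the spectral radius) \emph{or} they are numerous enough to build a genuinely large book, with no middle ground — and crucially, the book/Case~(b) branch has to survive the deletion of the heavy vertices without the spectral radius collapsing below $(1-\eps)\sqrt m - N(\eps)$. Controlling the spectral radius under vertex deletion is delicate because deleting $u$ (which has degree $\approx m/2$) removes about half the edges; the resolution is that in the book branch we do \emph{not} delete $u$ — rather we pass to the subgraph on $u$, the common edge's endpoints, and their common neighbourhood, which is a dense blow-up-like graph of bounded max degree relative to its own edge count. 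Getting the bookkeeping on $\|\V y\|^2$ versus $\V y^\top A \V y$ to close with the stated additive constant $N(\eps)$, rather than a multiplicative loss, is the part that will require the most care; I would handle it by choosing the low-weight truncation threshold as a function of $\eps$ only and absorbing all $\eps$-independent losses into $N(\eps)$.
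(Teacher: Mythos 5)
Your overall strategy differs from the paper's, but as written it has a genuine gap, and the gap is exactly at the point you flag as "the main obstacle": the claim that the book branch can be steered into case (b). Consider the generalized book $G=B_{3,t}=K_3\vee I_t$ (three hub vertices joined to an independent set of size $t$), which is Nosal for large $t$ with $m=3t+3$. Here $\Delta(G)\approx m/3$, so you are in your large-degree regime, but there is \emph{not} "essentially one vertex of very large degree": there are three hubs of degree $\Theta(m)$, and nothing in Theorem \ref{lem:Nosal-max-degree} rules this out (it only caps $\Delta$ at $\approx m/2$; e.g.\ $K_r\vee I_t$ has $r$ hubs of degree $\approx m/r$). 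For a hub $u$, the cut $(N(u),V\setminus N(u))$ has $V\setminus N(u)=\{u\}$, so the cross-cut bipartite graph is just the star at $u$, with spectral radius $\approx\sqrt{m/3}$ -- far below $(1-\varepsilon)\sqrt m$ -- while the edges inside $N(u)$ carry essentially all the eigenvector mass. So you land in your book branch; indeed there is a book of size $\approx m/3$ through $u$. But the subgraph you propose to take there ($u$, the common edge, and the common neighbourhood) is essentially all of $G$, whose maximum degree is $\approx m/3$, a constant fraction of its own edge count and certainly not $\le\varepsilon m$. In fact no subgraph of $B_{3,t}$ can satisfy (b) for small $\varepsilon$: every edge meets a hub, so a subgraph with $\Delta\le\varepsilon m$ has at most $3\varepsilon m$ edges and hence spectral radius at most $\sqrt{6\varepsilon m}<(1-\varepsilon)\sqrt m$. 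The only admissible outcome for this graph is (a), realised by the bipartition separating \emph{all three} hubs from the independent set -- which is not the cut at $N(u)$, nor anything your local-switching step around a single hub is guaranteed to find. So the dichotomy "either the edges inside $N(u)$ have negligible weight (bipartite via the $N(u)$-cut) or there is a big book and we reach (b)" admits precisely the middle ground you hoped to exclude.

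The paper avoids this by not anchoring the argument at a maximum-degree vertex at all (and it does not use Theorem \ref{lem:Nosal-max-degree} here). Instead it classifies vertices by Perron-weight scale: for thresholds $\varepsilon^{2t}$ and $\varepsilon^{-2t}m^{-1/2}$ it forms heavy, light and middle classes $A_t,B_t,C_t$, uses pigeonhole over $O(\varepsilon^{-2})$ scales to find a level where the boundary ring $C_{t-1}\setminus C_t$ carries eigenvector weight at most $\varepsilon^2\lambda$, shows the heavy--middle and light--middle edges are negligible, and discards the $O_\varepsilon(1)$ weight of edges inside $A_t$ and inside $B_t$ (this is the source of the additive $N(\varepsilon)$). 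What survives is $E[A_t,B_t]$ (bipartite, case (a)) together with $E[C_t]$ (whose maximum degree is at most $\varepsilon m$ by the eigenvalue equation, case (b)), and one of the two retains spectral radius $(1-\varepsilon)\lambda-N(\varepsilon)$. In the $K_r\vee I_t$ example this automatically places all hubs in $A_t$ and the independent set in $B_t$, producing the correct bipartition. If you want to salvage your approach, you would need to replace "the neighbourhood of one max-degree vertex" by "the set of all $\Theta(m)$-degree (equivalently, heavy-weight) vertices" as one side of the candidate bipartition, at which point you are essentially reconstructing the paper's weight-class decomposition.
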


\subsection{Applications on phase transitions and our approach}

We would like to point out an interesting analogy between Nosal graphs and $n$-vertex graphs with more than $ n^2/4$ edges. An avenue of new directions that fit into our framework is to find results about certain graph parameters that exhibit a phase transition/jump when $e(G)$ changes from $\lfloor n^2/4 \rfloor$ to 
$\lfloor n^2/4 \rfloor +1$ and extend such results to Nosal graphs. Some known examples are: (i) the number of triangles jumps from $0$ to 
$\Omega (n)$, see \cite{Erd1955,Erdos1964}; (ii) the booksize jumps from $0$ to $\Omega (n)$, see \cite{KN1979,BN2005,LFP2024-triangular}; (iii) the number of triangular edges jumps from 
 $0$ to $\Omega (n)$, see \cite{EFR1992};  (iv) the size of largest chordal subgraph 
 jumps from $0$ to $\Omega (n)$, see \cite{EGOZ1989}; (v) the number of copies of $C_4^+$ jumps from 
$0$ to $\Omega (n^2)$, see \cite{Mubayi2010}; (vi) the number of $K_4$-saturating-edges jumps from $0$ to $\Omega (n^2)$, see \cite{BL2014}. 
As corollaries of our results, we obtain spectral analogs of the jump phenomena of these parameters for Nosal graphs. We defer detailed discussions to Section~\ref{sec-5}.

Lastly, we remark that results obtained under the edge-spectral condition $\lambda(G)>\sqrt{m}$ of being Nosal is a generalization of both the density condition $e(G)>\lfloor n^2/4 \rfloor$ and the vertex-spectral condition $\lambda(G)>\sqrt{\lfloor n^2/4 \rfloor}$. Indeed, as mentioned before, by the Rayleigh formula graphs satisfying one of the latter two conditions must be Nosal. What is more important is that the results under vertex-spectral condition apply only to dense graphs with quadratically many edges, whereas the edge-spectral extensions can apply to graphs of all density (see, e.g., the sparse graphs in Examples \ref{exam-C4-1} and \ref{exam-C4-2}). It would be interesting to obtain more edge-spectral extensions of extremal results; see Theorem~\ref{thm-jsr+1} for one such example for joints (cf.~\Cref{cor:jsr}).

\paragraph{Our approach.} For Theorem \ref{thm-booksize}, we first prove a weighted version of 
Edwards--Khad\v{z}iivanov--Nikiforov's result (Lemma \ref{lem:Edwards-Blowup}) on the appearance of large books in dense graphs. Our main tool for this step is a novel random blowup argument. We then introduce a weighting on the Nosal graph $G$ where the vertices are weighted according to the Perron--Frobenius eigenvector $\bm{x}$. Assigning weights to edges turns out to be more delicate. If we assign the natural uniform weights to edges of $G$, the spectral radius condition implies that this weighted $G$ is dense and hence we can find a large weighted book. However, we cannot pull back this weighted book to an actual (unweighted) one in $G$ due to lack of control on the $\ell_{\infty}$-norm of $\bm{x}$. The key step of our proof is to carefully design a set of edge weights to circumvent this issue. To prove the more general~Theorem \ref{cor:joints}, we first extend the argument in Theorem \ref{thm-booksize} to find a large joint (Theorem \ref{thm-jsr+1}), which is a collection of cliques sharing a common edge. We then obtain a desired large generalized book from a large joint using the Kruskal--Katona theorem. 

For Theorem \ref{thm:lim}, using the fact that the fourth moment of the eigenvalues of $G$ counts the number of $4$-walks in $G$, one can estimate the number of $4$-cycles in $G$ with a combination of spectral and degree information (Lemma \ref{lem:LL-counting}). Unfortunately, this estimate alone is not strong enough to guarantee even a single copy of $C_4$ for Nosal graphs (see~\cref{eq:trace-not-enough}). This is where the two structural results (Theorems \ref{lem:Nosal-max-degree} and \ref{lem:Nosal-structure}) kick in. We lower bound the number of $C_4$ in the large subgraph $G'$ from~Theorem \ref{lem:Nosal-structure} instead. If $G'$ is bipartite, we gain a factor of 2 improvements on both the fourth moment of the eigenvalues and the $\ell_2$-norm of the degree sequence, and the desired $C_4$ count follows. If $G'$ has maximum degree $o(m)$, we can then bound the  $\ell_2$-norm of the degree sequence by $o(m^2)$, which is also sufficient to finish the proof.

\paragraph{Organization.} 
We prove~\Cref{thm-booksize} in Section \ref{sec-2}. Then in Section \ref{sec-3}, we extend our technique to prove~\Cref{cor:joints}. The proof of~\Cref{thm:lim,lem:Nosal-max-degree,lem:Nosal-structure} are given in Section \ref{sec-4}. In Section \ref{sec-5}, 
we provide some  applications of our results on phase transitions in Nosal graphs. In Section \ref{sec-6}, we conclude with some related spectral graph problems.

\paragraph{Notation.}  
All graphs in this paper are simple and undirected. Throughout, $G$ is a graph with vertex set $V$ and edge set $E$. Unless indicated otherwise, $n$ denotes the number of vertices of $G$, and $m$ denotes the number of edges of $G$. The degree of a vertex $i\in V$ is denoted by
$d_i$. The maximum degree of $G$ is denoted by $\Delta(G)$. The set of neighbors of a vertex $i\in V$ is denoted by $N(i)$. 
The adjacency matrix $A(G)$ of a graph $G$ 
is defined as a $V\times V$ matrix 
with $a_{i,j}=a_{j,i}=1$ if and only if $\{i,j\}\in E(G)$, 
and $a_{i,j}=a_{j,i}=0$ otherwise. 
Since $A(G)$ is real and symmetric, all eigenvalues of
$A(G)$ are real and can be sorted as
$\lambda_1\ge \lambda_2 \ge \cdots \ge \lambda_n$. 
Let $\lambda (G)$ be the largest eigenvalues of  $G$, which is 
 known as the {\it spectral radius} of  $G$. 
 By the Perron--Frobenius theorem, 
we have $\lambda(G) \geq \abs{\lambda_i}$ for any $i \in [n]$, and there exists a nonnegative unit eigenvector (called the Perron--Frobenius eigenvector) $\bm{x} = (x_i)_{i \in V}$
corresponding to $\lambda (G)$. 
In particular, for each $i \in V$ we have $\lambda (G) x_i = 
\sum_{j=1}^n a_{i,j} x_j =  \sum_{j\in N(i)} x_j.$
In the sequel, we shall write $\sum_{\{i,j\} \in E}$ for the sum 
over each edge in $E$ once.

\section{Proof of Theorem \ref{thm-booksize}}
\label{sec-2}

We first give three different constructions to show that the order of magnitude $\sqrt{m}$ in Theorem \ref{thm-booksize} is the best possible. Without loss of generality, we assume that $m$ is a perfect square.

\begin{example}\label{exam-booook}
Let $ s = \sqrt{m} +1$ and $t=m- {s \choose 2} \approx m/2$. 
We choose $H$ as any triange-free graph with 
$t$ edges. Define $K_s\circ H$ as the graph obtained 
from $K_s$ and $H$ by identifying a vertex. We can see that $\lambda(K_s \circ H) > \lambda (K_s) = \sqrt{m}$ 
and the booksize $bk(K_s\circ H)= \sqrt{m} -1$. 
\end{example}

\begin{example} \label{exam-KST}
We define $H=K_{s,t}^+$ as the graph obtained from 
the complete bipartite graph $K_{s,t}$ by adding an edge to the part of size $s$. Then the number of edges in $H$ is $m=st+1$. 
One can compute 
that $\lambda (H) > \sqrt{m}$ whenever $s< 4(t+1)$. 
Setting $s=2\sqrt{m}+1$ and $t=\frac{m-1}{2\sqrt{m}+1}$, 
we have $\lambda (H) > \sqrt{m}$ and $H$ has booksize $bk(H) = t \approx \frac{1}{2}\sqrt{m}$.
\end{example}

\begin{example} \label{exam-prism}
Let $C_3^{\square}$ be the triangular prism consisting of two disjoint triangles and a perfect matching joining them.  
We define $G$ as
 the blow-up of $C_3^{\square}$, where we 
replace each vertex in the upper triangle of $C_3^{\square}$ with an independent set of size $(k + 1)$ and each vertex in the lower triangle with an independent set of size $(k - 1)$. Each edge of $C_3^{\square}$ is replaced with a complete bipartite graph. 
Then $n=|G|=6k$, $m=e(G)=9k^2+3$ and $bk(G)=k+1 \leq \frac{1}{3} \sqrt{m} + 1$. The graph is Nosal since $m > n^2 / 4$ implies $\lambda (G)> \sqrt{m}$.
\end{example}

Our starting point for finding a large book is the following theorem of Edwards--Khad\v{z}iivanov--Nikiforov. 
We refer the reader to \cite{EFR1992,BN2005,LFP2024-triangular} for detailed proofs. 

\begin{lemma}[See \cite{KN1979}] \label{lem-Edw}
    If an $n$-vertex graph $G$ has more than $n^2 / 4$ edges, 
    then $bk(G) > {n}/{6}$.
\end{lemma}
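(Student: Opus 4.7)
The plan is to prove the contrapositive: assume $bk(G) \leq n/6$ and deduce $m \leq n^2/4$.

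The local consequence of the hypothesis is the edge inequality
\[ d(u) + d(v) = |N(u) \cup N(v)| + |N(u) \cap N(v)| \leq n + bk(G) \leq \tfrac{7n}{6} \]
valid for every $uv \in E(G)$. Aggregating by Cauchy--Schwarz,
\[ \frac{4m^2}{n} \leq \sum_{v \in V} d(v)^2 = \sum_{uv \in E}\bigl(d(u) + d(v)\bigr) \leq \frac{7mn}{6}, \]
which yields $m \leq 7n^2/24$. Equivalently, the triangle identity $3 t(G) = \sum_{uv \in E} |N(u) \cap N(v)| \leq mn/6$ combined with the Goodman-type lower bound $3 t(G) \geq m(4m - n^2)/n$ gives the same conclusion. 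Since $7/24 > 1/4$, this naive aggregation is off by a factor of $7/6$ and does not yet contradict $m > n^2/4$.

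To recover the sharp constant $1/6$, the plan is to run an induction on $n$ driven by a low-degree vertex deletion. If some $v \in V$ has $d(v) \leq (n-1)/2$, then $G - v$ has $n - 1$ vertices, $e(G - v) > n^2/4 - (n-1)/2 > (n-1)^2/4$ edges, and $bk(G - v) \leq bk(G)$, so the induction can be applied to $G - v$. Otherwise $\delta(G) > (n-1)/2$, and a finer structural analysis in this dense, near-regular regime (where near-extremal graphs such as the triangular prism blow-up of Example~\ref{exam-prism} are highly constrained) forces some edge into more than $n/6$ triangles.

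The main obstacle is that the naive induction weakens the bound from $bk(G) > n/6$ to $bk(G - v) > (n-1)/6$ at each step, losing a term $1/6$ per reduction and failing to close the argument. A careful strengthening of the induction hypothesis is required, such as tracking $6 \cdot bk(G) - n$ as an absolute integer quantity or inducting on the surplus $m - n^2/4$ instead of on $n$; this delicate accounting is the substance of the arguments in \cite{KN1979} and \cite{BN2005}. An alternative proof in \cite{LFP2024-triangular} bypasses the explicit induction via a random blowup / weighted counting argument, which would be my preferred route if writing out a self-contained proof.
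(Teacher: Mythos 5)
Note first that the paper does not prove Lemma \ref{lem-Edw} at all: it is quoted as the Edwards--Khad\v{z}iivanov--Nikiforov theorem, with the reader referred to \cite{EFR1992,BN2005,LFP2024-triangular} for detailed proofs. So the relevant question is whether your write-up stands on its own as a proof, and it does not. The parts you do carry out are correct: for an edge $uv$ one indeed has $d(u)+d(v)=|N(u)\cup N(v)|+|N(u)\cap N(v)|\le n+bk(G)$, and your aggregation honestly shows that this averaging argument only gives $m\le 7n^2/24$ under $bk(G)\le n/6$; the $7/6$ loss is intrinsic to bounding every edge's codegree by $bk(G)$ and every union by $n$ simultaneously, so no refinement of the Cauchy--Schwarz bookkeeping of this type reaches the sharp threshold $n^2/4$, which is the whole difficulty just above $m=\lfloor n^2/4\rfloor$.

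The second half is a plan rather than an argument, and the gap sits exactly where all known proofs do their real work. In the vertex-deletion branch, the computation $e(G-v)>n^2/4-(n-1)/2>(n-1)^2/4$ is fine, but the induction hypothesis only returns $bk(G)\ge bk(G-v)>(n-1)/6$, which (since $bk$ is an integer) does not yield $bk(G)>n/6$ when $6\mid n$; the proposed repairs (tracking $6\,bk(G)-n$, or inducting on the surplus $m-n^2/4$) are named but never shown to close, and it is not at all routine that they do. In the complementary branch $\delta(G)>(n-1)/2$, the assertion that ``a finer structural analysis forces some edge into more than $n/6$ triangles'' is precisely the heart of the Edwards--Khad\v{z}iivanov--Nikiforov argument (the near-extremal configurations such as the prism blow-up of Example \ref{exam-prism} show how tight this case is), and you defer it wholesale to \cite{KN1979,BN2005,LFP2024-triangular}. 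In effect you have reduced the lemma to its hard case and then cited the literature for that case --- which matches the status the lemma has in the paper, but is not a proof.
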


A key ingredient in our proof is a weighted version of Lemma \ref{lem-Edw}. We deduce this result by a random blowup argument. We need Hoeffding's inequality as stated below.

\begin{lemma} \label{lem:Cher}
    Let $X$ be a sum of $K$ independent Bernoulli random variables. Then we have
   $$\mathrm{Pr} \left(|X-\mathbb{E}(X)| \ge K^{3/4} \right) \leq \exp\left(-\Omega(\sqrt{K}) \right).$$
\end{lemma}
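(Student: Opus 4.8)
The plan is to prove Lemma~\ref{lem:Cher}, which is just Hoeffding's inequality with the deviation parameter specialized to $K^{3/4}$, so the work is essentially bookkeeping rather than a new idea. Writing $X = \sum_{i=1}^K Y_i$ where each $Y_i \in \{0,1\}$ is an independent Bernoulli variable, I would invoke the standard Hoeffding bound for sums of independent variables bounded in $[0,1]$, namely $\mathrm{Pr}(|X - \mathbb{E}(X)| \ge t) \le 2\exp(-2t^2/K)$ for all $t > 0$. One can derive this in a self-contained way via the exponential moment method: for $s > 0$, $\mathbb{E}(e^{s(Y_i - \mathbb{E} Y_i)}) \le e^{s^2/8}$ by Hoeffding's lemma (a variable supported in an interval of length $1$ has sub-Gaussian MGF with parameter $1/4$), so by independence $\mathbb{E}(e^{s(X - \mathbb{E} X)}) \le e^{Ks^2/8}$, and then Markov's inequality gives $\mathrm{Pr}(X - \mathbb{E} X \ge t) \le e^{Ks^2/8 - st}$; optimizing over $s$ (take $s = 4t/K$) yields $\exp(-2t^2/K)$, and the symmetric tail is handled identically, costing the factor $2$.

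Having the general bound in hand, I would simply substitute $t = K^{3/4}$. This gives $2t^2/K = 2K^{3/2}/K = 2\sqrt{K}$, so
\[
\mathrm{Pr}\left(|X - \mathbb{E}(X)| \ge K^{3/4}\right) \le 2\exp\left(-2\sqrt{K}\right) = \exp\left(-\Omega(\sqrt{K})\right),
\]
which is exactly the claimed inequality. The final $\exp(-\Omega(\sqrt{K}))$ notation absorbs the constant $2$ in front and the constant $2$ in the exponent without any further care.

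There is no real obstacle here; the only thing to be slightly careful about is making sure the hypotheses match — the variables are genuinely independent and each is a Bernoulli (hence $[0,1]$-valued), so Hoeffding applies directly, and no boundedness or variance assumption beyond what is stated is needed. If one wishes to avoid citing Hoeffding's lemma, the inequality $\mathbb{E}(e^{s(Y-p)}) \le e^{s^2/8}$ for a $\mathrm{Bernoulli}(p)$ variable $Y$ can be checked by a short convexity argument on the function $\log\big(pe^{s(1-p)} + (1-p)e^{-sp}\big)$, but for the purposes of this paper it is cleanest to just quote the standard form of Hoeffding's inequality and plug in $t = K^{3/4}$.
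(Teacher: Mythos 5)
Your proposal is correct and matches the paper's intent: the paper states this lemma as a direct instance of Hoeffding's inequality without further proof, and your argument is just the standard Hoeffding/Chernoff derivation with $t = K^{3/4}$ plugged in, which is exactly what is needed.
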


Our weighted version of Lemma \ref{lem-Edw} reads as follows.
\begin{lemma}
    \label{lem:Edwards-Blowup}
    Let $G$ be a graph with vertex weights $w_i \in [0, \infty)$ and edge weights $p_{ij} \in [0, 1]$. Suppose that $\sum_{i\in V(G)} w_i = 1$ and 
    $$\sum_{\{i,j\} \in E(G)} p_{ij}w_i w_j > \frac{1}{4}.$$
    Let $B(i, j)$ denote the set of common neighbors of vertices $i$ and $j$ in $G$. Then there exists an edge $\{i,j\} \in E(G)$ with $p_{ij} > 0$ such that
    $$\sum_{k \in B(i, j)} p_{ik} p_{jk} w_k \geq \frac{1}{6}.$$
\end{lemma}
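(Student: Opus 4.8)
The plan is to deduce the weighted statement (Lemma~\ref{lem:Edwards-Blowup}) from the unweighted one (Lemma~\ref{lem-Edw}) by a random blowup. First I would discretize the vertex weights: fix a large integer $K$ and, for each vertex $i$, create a cluster $V_i$ of roughly $\lfloor K w_i \rfloor$ copies of $i$. Then I would build a random graph $H$ on $\bigcup_i V_i$ by, independently for each edge $\{i,j\}\in E(G)$ and each pair $(u,v)\in V_i\times V_j$, placing the edge $uv$ with probability $p_{ij}$ (and adding no edges inside a cluster). The total number of vertices of $H$ is $N = K + O(n)$, and the expected number of edges is $\sum_{\{i,j\}\in E}p_{ij}|V_i||V_j| \approx K^2 \sum_{\{i,j\}\in E} p_{ij} w_i w_j > \tfrac14 K^2$. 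The point of randomizing the edges (rather than just taking blow-ups of present edges) is precisely to realize the fractional weights $p_{ij}$; each cluster contributes no internal edges, which costs only $O(K)$ relative to $N^2/4$.

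The second step is concentration. The number of edges of $H$ is a sum of independent Bernoullis, so by Hoeffding's inequality (Lemma~\ref{lem:Cher}) it is at least $\bigl(\tfrac14 + \delta\bigr)K^2 - O(K)$ with high probability, where $\delta>0$ is chosen so that $\sum p_{ij}w_iw_j > \tfrac14+\delta$; for $K$ large this exceeds $N^2/4$. Similarly, for each fixed edge $\{i,j\}\in E(G)$ with $p_{ij}>0$ and each pair $(u,v)\in V_i\times V_j$, the quantity $\sum_{k\in B(i,j)}\sum_{w\in V_k}[uw\in H][vw\in H]$ is a sum of independent Bernoullis with mean $\approx K\sum_{k\in B(i,j)}p_{ik}p_{jk}w_k$, so it concentrates around that value up to an error $K^{3/4}$ with failure probability $\exp(-\Omega(\sqrt K))$. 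Taking a union bound over the $O(K^2)$ relevant pairs $(u,v)$ (there are at most $|E(G)|$ edge-types and $O(K^2)$ pairs per type, but polynomially many failure events total), all these estimates hold simultaneously with positive probability, so we may fix one such realization $H$.

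The third step applies Lemma~\ref{lem-Edw} to $H$: since $e(H) > N^2/4$, there is an edge $uv$ of $H$ lying in more than $N/6$ triangles, i.e.\ $|N_H(u)\cap N_H(v)| > N/6 \geq (1-o(1))K/6$. Say $u\in V_i$ and $v\in V_j$; then $\{i,j\}\in E(G)$ and $p_{ij}>0$ (else $uv\notin H$), and every common neighbour $w$ of $u,v$ in $H$ lies in some $V_k$ with $k\in B(i,j)$. By the concentration bound from step two applied to this particular pair $(u,v)$, the number of common neighbours is at most $K\sum_{k\in B(i,j)}p_{ik}p_{jk}w_k + K^{3/4} + O(n)$. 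Combining the two inequalities and dividing by $K$, we get $\sum_{k\in B(i,j)}p_{ik}p_{jk}w_k \geq \tfrac16 - o(1)$ as $K\to\infty$; since the left side does not depend on $K$, it is in fact $\geq \tfrac16$, giving the desired edge $\{i,j\}$.

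\textbf{Main obstacle.} The delicate point is the bookkeeping of error terms and the union bound: one must check that the $O(K^2)$-many concentration events (one per pair of cluster-vertices of each positive-weight edge type, plus the single edge-count event) each fail with probability $\exp(-\Omega(\sqrt K))$, so that their union still fails with probability $o(1)$, while simultaneously the additive slack $\delta K^2$ in the edge count dominates both the $O(K)$ loss from empty clusters/internal non-edges and the rounding errors from $\lfloor K w_i\rfloor$. Handling vertices with very small or zero weight $w_i$ (empty or tiny clusters) and edges with $p_{ij}=0$ needs a small amount of care but causes no real trouble. Once the parameters are set so that all error terms are $o(K^2)$ in the edge count and $o(K)$ in the common-neighbourhood count, letting $K\to\infty$ closes the argument.
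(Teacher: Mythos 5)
Your proposal is correct and follows essentially the same route as the paper: a random blowup with clusters of size $\lfloor Kw_i\rfloor$ and edges placed with probability $p_{ij}$, Hoeffding concentration for the edge count and all codegrees with a union bound, the unweighted Edwards--Khad\v{z}iivanov--Nikiforov lemma applied to the blowup, and $K\to\infty$. The only cosmetic point is that the edge $\{i,j\}$ you extract may depend on $K$, so one should take the maximum over the (finitely many) edges of $G$ before letting $K\to\infty$, exactly as the paper does.
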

We remark that taking $p_{ij} \equiv 1$ and $w_i \equiv \frac{1}{n}$ recovers \cref{lem-Edw}.

\begin{proof}
    Let $N$ be a sufficiently large positive integer. We construct a random graph $\Tilde{G}$ as follows. The vertex set of $\Tilde{G}$ is the disjoint union of independent sets $V_i$ of size $\floor{w_i N}$ for each $i \in V(G)$. The edge set of 
    $\Tilde{G}$ is defined as follows: for each edge $\{i,j\} \in E(G)$, between each pair of vertices in $V_i \times V_j$, we put an edge independently with probability $p_{ij}$. Note that $\Tilde{G}$ has at most $N$ vertices. By Lemma \ref{lem:Cher}, with probability $1 - e^{-\Omega(N)}$, the number of edges in $\Tilde{G}$ is at least 
    $$\sum_{\{i,j\} \in E(G)} p_{ij} \abs{V_i} \abs{V_j} - N^{3/2} > {N^2}/{4}.$$
    Then by Lemma \ref{lem-Edw}, with probability $1 - e^{-\Omega(N)}$, we have $bk(\Tilde{G}) > {N}/{6}$.
    
    By construction, edges in $\Tilde{G}$ can exist between $V_i$ and $V_j$ only if $\{i,j\} \in E(G)$ and $ p_{ij} > 0$. 
    For any $\{i,j\} \in E(G)$ and any two vertices $(u, v) \in V_i \times V_j$, another vertex $w$ is a common neighbor of $u, v$ only if $w \in V_k$ for some $k \in B(i, j)$, in which case $w$ is a common neighbor of $u, v$ with probability $p_{ik} p_{jk}$. Furthermore, for distinct $w$, the events that $w$ is a common neighbor of $u, v$ are independent. Thus, the expected number of common neighbors of $u, v$ is at most $N \cdot \sum_{k \in B(i, j)} p_{ik} p_{jk} w_k$. Then by Lemma \ref{lem:Cher} again, with probability $1 - e^{-\Omega(\sqrt{N})}$, the number of common neighbors of $u, v$ is at most 
    $$N \cdot \sum_{k \in B(i, j)} p_{ik} p_{jk} w_k + N^{3/4}.$$
    Taking the union bound over all pairs of vertices, we conclude that with probability $1 - e^{-\Omega(\sqrt{N})}$, 
    $$N \cdot \max_{\substack{ \{i,j\} \in E(G)\\ p_{ij} > 0}} \sum_{k \in B(i, j)} p_{ik} p_{jk} w_k + N^{3/4} \ge bk(\Tilde{G})>  \frac{N}{6}.$$
    As this holds for any sufficiently large $N$, we have
    $$\max_{\substack{\{i,j\} \in E(G)\\ p_{ij}> 0}} \sum_{k \in B(i, j)} p_{ik} p_{jk} w_k \geq \frac{1}{6}, $$
    as desired.
\end{proof}

The above proof relies on the use of Lemma \ref{lem-Edw}. 
It might be of interest to find an elementary proof of 
Lemma \ref{lem:Edwards-Blowup} without going through Lemma \ref{lem-Edw}. 

We are now ready to prove Theorem \ref{thm-booksize}. 

\begin{proof}[Proof of Theorem \ref{thm-booksize}]
Assume that $G=(V,E)$ is Nosal, i.e. satisfies $\lambda(G) > \sqrt{m}$. Let $\bm{x}$ be the unit Perron--Frobenius eigenvector of $G$. If $G$ is not connected, we can identify one vertex from each connected component to obtain a connected graph with larger spectral radius, the same number of edges and the same booksize. So we may assume that $G$ is connected. This assumption ensures that $x_i > 0$ for every $i\in V$.  

We apply Lemma \ref{lem:Edwards-Blowup} with vertex weights $w_i := x_i^2$, which satisfies the condition $\sum_{i\in V} w_i = 1$.
The choice of the edge weights $p_{ij}$ is more subtle. A straightforward idea is $p_{ij} = 1$ for all $\{i, j\}\in E(G)$. Then by convexity and that $\lambda (G) = 2\sum_{\{i,j\}\in E(G)} x_i x_j$, we have
$$\sum_{\{i,j\} \in E} p_{ij} w_i w_j 
= \sum_{\{i,j\} \in E} x_i^2 x_j^2 \geq \frac{1}{m} \left(\sum_{\{i,j\} \in E} x_i x_j \right)^2 = \frac{\lambda^2(G)}{4m} > \frac{1}{4}.$$
So we can apply Lemma \ref{lem:Edwards-Blowup} to conclude that there exists $\{i,j\} \in E$ such that
$$\sum_{k \in B(i, j)} x_k^2 \geq \frac{1}{6}.$$
Unfortunately, this is not sufficient to lower bound $bk(G)$, since we have no control over $x_k^2$ for $k \in B(i, j)$. Instead, we engineer a different set of edge weights
$$p_{ij} := \frac{\max\left\{m^{-1/2} x_i x_j - (4m)^{-1}, 0\right\}}{x_i^2 x_j^2}.$$
This weight satisfies $p_{ij} \in [0, 1]$ since  $x_i^2 x_j^2 - m^{-1/2} x_i x_j + (4m)^{-1} 
= (x_i x_j - 2^{-1}m^{-1/2})^2 \geq 0$. Furthermore, we have
$$\sum_{\{i,j\} \in E} w_i w_j p_{ij} 
\geq \sum_{\{i,j\} \in E} (m^{-1/2} x_i x_j - (4m)^{-1}) = \frac{\lambda(G)}{2m^{1/2}} - \frac{1}{4} > \frac{1}{4}.$$
So Lemma \ref{lem:Edwards-Blowup} is applicable, 
and there exists $\{i,j\} \in E(G)$ with $p_{ij} > 0$ such that
\begin{equation}  \label{eq-16}
\sum_{k \in B(i, j)} p_{ik} p_{jk} w_k 
\geq \frac{1}{6}.
\end{equation}
Since $p_{ij} > 0$, we have $x_i x_j > 4^{-1}m^{-1/2}$. Without loss of generality, we may assume that 
$$x_i > \frac{1}{2m^{1/4}}.$$
Let $B'(i, j)$ be the set of vertices $k \in B(i, j)$ with $p_{ik} > 0$. Then (\ref{eq-16}) yields 
$$\frac{1}{6} \leq \sum_{k \in B(i, j)} p_{ik} p_{jk} w_k \leq \sum_{k \in B'(i, j)} p_{ik} w_k = \sum_{k \in B'(i, j)} \frac{m^{-1/2} x_i x_k - (4m)^{-1}}{x_i^2}.$$
The subsequent maneuvers are somewhat lossy and do not give an exact bound. First,
$$\frac{1}{6} \leq \sum_{k \in B'(i, j)} \frac{m^{-1/2} x_i x_k - (4m)^{-1}}{x_i^2} < \sum_{k \in B'(i, j)} m^{-1/2} \frac{x_k}{x_i}.$$
Therefore, we have
$$\sum_{k \in B'(i, j)} x_k > \frac{1}{6}x_i m^{1/2} > \frac{1}{12} m^{1/4}.$$
On the other hand, by the Cauchy--Schwarz inequality, we obtain
$$\sum_{k \in B'(i, j)} x_k \leq \sqrt{\abs{B'(i, j)} \sum_{k \in B'(i, j)} x_k^2} \leq \sqrt{\abs{B'(i, j)}}.$$
Thus we conclude that $\abs{B'(i, j)} > \frac{1}{144} \sqrt{m}$, so $B'(i, j)$ and $i, j$ form a book of size at least $\frac{1}{144}\sqrt{m}$, as desired.
\end{proof}

\section{Proof of Theorem \ref{cor:joints}}

\label{sec-3}

%In this section, we resolve Conjecture \ref{conj-B-rk} by generalizing the previous section.

An \emph{$r$-joint} is a family of $r$-cliques sharing a common edge. Let $js_r(G)$ be the maximum number of $r$-cliques in an $r$-joint of $G$. For example, we have $js_3(G) = bk(G)$. 
A result of Erd\H{o}s \cite{Erd1969} 
shows that an $n$-vertex graph $G$ with more than 
$e(T_{n,r})$ edges contains not only a $K_{r+1}$, 
but also a large $(r+1)$-joint, i.e., 
$\Omega_r(n^{r-1})$ copies of $K_{r+1}$ sharing an edge. 

\begin{lemma}[See \cite{Erd1969,BN2008}] 
\label{thm-Erd-joint}
    If $G$ is a graph on $n$ vertices with $e(G) > (1 - \frac{1}{r}) \frac{n^2}{2}$, then 
    \[ js_{r+1}(G) = \Omega_r(n^{r - 1}). \] 
\end{lemma}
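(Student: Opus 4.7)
My plan is to prove this by induction on $r$, with the base case $r = 2$ furnished by Lemma \ref{lem-Edw} (the Edwards--Khad\v{z}iivanov--Nikiforov book theorem), which gives $js_3(G) \geq n/6 = \Omega(n)$ whenever $e(G) > n^2/4$.

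For the inductive step, given $G$ with $e(G) > (1 - 1/r) n^2/2$, the strategy is to locate an edge $uv \in E(G)$ whose common neighborhood $W := N(u) \cap N(v)$ is both of linear size ($|W| = \Omega_r(n)$) and has edge density in $G[W]$ bounded away from the Tur\'an threshold $1 - 1/(r-2)$ for $K_{r-1}$-freeness. Granting this, Erd\H{o}s' clique-supersaturation yields $k_{r-1}(G[W]) = \Omega_r(n^{r-1})$ copies of $K_{r-1}$ in $G[W]$; each such $(r-1)$-clique together with $\{u, v\}$ forms a $K_{r+1}$ in $G$ containing the common edge $uv$, producing the desired $(r+1)$-joint of size $\Omega_r(n^{r-1})$.

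The main obstacle is locating this good edge $uv$. A naive averaging argument is insufficient, as witnessed by the canonical extremal example $T_{n, r}$ plus a single extra edge $e^{*}$: only the single edge $e^{*}$ enjoys the required density property, while the average of $e(G[N(u) \cap N(v)])$ over all edges of $G$ is much smaller than this maximum. To overcome this, my plan is to split into two regimes. When the excess $e(G) - (1 - 1/r)n^2/2$ is at least $\Omega_r(n^2)$, classical clique-supersaturation provides $k_4(G) = \Omega_r(n^4)$, and then the identity $\sum_{uv \in E(G)} e(G[N(u) \cap N(v)]) = 6\, k_4(G)$ combined with averaging locates a good $uv$. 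When the excess is $o(n^2)$, a stability argument for Tur\'an's theorem asserts that $G$ is structurally close to $T_{n, r}$, and the few extra edges play the role of $e^{*}$ in the canonical extremal example, with common neighborhoods inheriting the bipartite-like Tur\'an structure at one level lower, which directly yields the required density in $G[W]$.
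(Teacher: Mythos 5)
The paper does not actually prove this lemma --- it is quoted from Erd\H{o}s \cite{Erd1969} and Bollob\'as--Nikiforov \cite{BN2008} --- so your proposal must stand on its own, and as written it has genuine gaps. A smaller one first: in your dense regime, the identity $\sum_{uv \in E} e(G[N(u)\cap N(v)]) = 6k_4(G)$ plus averaging only produces an edge $uv$ with $e(G[N(u)\cap N(v)]) = \Omega_r(n^2)$; that is \emph{not} the same as the density of $G[N(u)\cap N(v)]$ exceeding the Tur\'an threshold $1-\frac{1}{r-2}$, so for $r\ge 4$ your ``good edge'' is not located (a graph on $\le n$ vertices with $\Omega(n^2)$ edges can be $K_{r-1}$-free). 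The fix is easy and bypasses the density notion altogether: supersaturation gives $k_{r+1}(G)=\Omega_r(n^{r+1})$, and averaging the identity $\sum_{uv\in E} k_{r-1}(G[N(u)\cap N(v)]) = \binom{r+1}{2}k_{r+1}(G)$ over the at most $\binom{n}{2}$ edges directly yields an edge in $\Omega_r(n^{r-1})$ copies of $K_{r+1}$. (Also, this is not really an induction on $r$: the inductive hypothesis is never invoked, only the base case via Lemma \ref{lem-Edw} and clique supersaturation.)

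The serious gap is the near-extremal regime. First, Tur\'an stability requires $K_{r+1}$-freeness (or $o(n^{r+1})$ copies of $K_{r+1}$, via removal); your $G$ is never $K_{r+1}$-free, so you must first split off the many-$K_{r+1}$ case --- fixable by the averaging above, but unmentioned. Second, and crucially, the claim that the extra edges' common neighborhoods ``inherit the Tur\'an structure one level lower, which directly yields the required density'' is unjustified and false as stated: the endpoints of an internal edge may each be missing essentially an entire part of their cross neighborhood while $e(G)$ stays above $(1-\frac1r)\frac{n^2}{2}$. For instance, modify $T_{n,r}$ by choosing $B\subseteq V_1$ with $|B|=n/\log n$, joining every vertex of $B$ to all of $V_1$, and deleting all but $n/\log n$ of the $V_2$-neighbors of each vertex of $B$; the edge count exceeds the threshold, yet every internal edge has an endpoint seeing only $o(n)$ of $V_2$, so no internal edge has common neighborhood close to complete $(r-1)$-partite on $V_2,\dots,V_r$. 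The required joint still exists (through an edge inside $B$, whose common neighborhood contains $V_1\setminus\{u,v\}$ as a \emph{substitute} part), but locating it needs a case analysis your sketch does not contain; the natural budget count ``internal edges $\ge$ missing cross pairs $+1$'' only excludes endpoint deficits above roughly $2n/r$, which cannot prevent a whole part from disappearing. This near-extremal analysis is exactly where the cited proofs do their work, so the proposal is a plausible outline but not yet a proof.
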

More precisely, Erd\H{o}s \cite{Erd1969} proved that 
$js_{r+1}(G) \ge n^{r-1}/(10r)^{6r} $.
Bollob\'{a}s and Nikiforov \cite{BN2008} later strengthened the result to $js_{r+1}(G) \ge n^{r-1}/ r^{r+5}$, though it remains an open question to tighten the constant term in Lemma \ref{thm-Erd-joint}. 
In this section, we shall prove the following spectral Tur\'{a}n analogue of Lemma \ref{thm-Erd-joint} for graphs with given size. 

\begin{theorem} \label{thm-jsr+1}
    \label{thm:joints}
    If $G$ is a graph with $\lambda^2(G) > (1 - \frac{1}{r}) 2m$, then 
    \[ js_{r + 1}(G) =
    \Omega_r(m^{\frac{r - 1}{2}}). \] 
    Moreover, this bound is tight up to a constant factor. 
\end{theorem}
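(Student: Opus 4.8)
\textbf{Proof strategy for Theorem \ref{thm-jsr+1}.} The plan is to mirror the argument of Theorem \ref{thm-booksize}, replacing the book lower bound (Lemma \ref{lem-Edw}) by its joint analogue (Lemma \ref{thm-Erd-joint}) in a suitably weighted form, and replacing the $K_{2,k}$-counting step at the end by counting copies of $K_{r-1,k}$ sitting on top of an already-found $r$-clique. Concretely, I would first prove a weighted version of Lemma \ref{thm-Erd-joint}: given vertex weights $w_i \ge 0$ with $\sum_i w_i = 1$ and edge weights $p_{ij}\in[0,1]$ satisfying $\sum_{\{i,j\}\in E} p_{ij} w_i w_j > (1-\frac1r)\frac12$, there is an edge $\{i,j\}$ with $p_{ij}>0$ that lies in ``weighted joint mass'' $\Omega_r(1)$, where the joint mass of an edge is $\sum_{S} \prod_{\{k,\ell\}\in E(S)} p_{k\ell} \prod_{k\in S} w_k$, the sum running over $(r-1)$-subsets $S$ of common neighbours such that $\{i,j\}\cup S$ spans $K_{r+1}$. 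This follows by exactly the random blow-up argument used for Lemma \ref{lem:Edwards-Blowup}: build $\tilde G$ with part sizes $\lfloor w_i N\rfloor$, include each cross-edge of a $p_{ij}$-pair independently with probability $p_{ij}$, apply Hoeffding (Lemma \ref{lem:Cher}) so that $e(\tilde G) > (1-\frac1r)\frac{N^2}{2}$ whp, invoke Lemma \ref{thm-Erd-joint} to get $js_{r+1}(\tilde G) = \Omega_r(N^{r-1})$, and then concentrate the number of $K_{r+1}$'s on top of each fixed edge of $\tilde G$. (The concentration here is slightly more delicate than in Lemma \ref{lem:Edwards-Blowup} since we are counting $(r-1)$-sets rather than single common neighbours, but a union bound over the $O(N^{r-1})$ candidate sets, each a product of independent indicators, still works.)

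Next I would run the spectral weighting. Take $G$ Nosal in the sense $\lambda^2(G) > (1-\frac1r)2m$, assume $G$ connected (identifying one vertex per component only helps), let $\bm x$ be the Perron eigenvector, set $w_i := x_i^2$, and use the same engineered edge weights
\[
p_{ij} := \frac{\max\{m^{-1/2} x_i x_j - (4m)^{-1},\,0\}}{x_i^2 x_j^2},
\]
so that $p_{ij}\in[0,1]$ and $\sum_{\{i,j\}\in E} w_i w_j p_{ij} \ge \frac{\lambda(G)}{2\sqrt m} - \frac14 > (1-\frac1r)\frac12$ by the spectral hypothesis (here I just need $\lambda(G)/\sqrt m > 2(1-\frac1r)\cdot\frac12 + \frac12$, i.e. $\lambda^2(G) > (1-\frac1r)\cdot 2m$ up to the crude slack, which may need the constant in the weighted lemma to be chosen mildly conservatively — a point to check). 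Applying the weighted joint lemma produces an edge $\{i,j\}$ with $p_{ij}>0$, hence $x_i x_j > \frac{1}{4\sqrt m}$; WLOG $x_i > \frac{1}{2}m^{-1/4}$. The weighted joint mass $\Omega_r(1)$ over $(r-1)$-sets $S\subseteq B(i,j)$ can be bounded above by restricting to sets of vertices $k$ with $p_{ik}>0$ and using $p_{ik} p_{jk} \le p_{ik} \le m^{-1/2} x_k / x_i^2 \cdot x_i = m^{-1/2} x_k/x_i$ as in the book case; this bounds $\sum_S \prod_{k\in S} \frac{x_k}{x_i} m^{-(r-1)/2} \cdot (\text{product of }p\text{'s on edges within }S) \le \frac{1}{(r-1)!}\big(\sum_{k\in B'} x_k / x_i\big)^{r-1} m^{-(r-1)/2}$, so $\sum_{k\in B'(i,j)} x_k = \Omega_r(x_i \sqrt m) = \Omega_r(m^{1/4})$. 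Cauchy--Schwarz against $\sum_{k} x_k^2 \le 1$ then gives $|B'(i,j)| = \Omega_r(\sqrt m)$. This is a large set sitting in the common neighbourhood of $i,j$, but unlike the book case these vertices need not all be pairwise related to each other, so one more step is needed to extract a genuine $(r+1)$-joint.

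That last extraction is where I expect the main obstacle, and the tool is the same as for Theorem \ref{cor:joints}: having produced the edge $\{i,j\}$ whose ``$p$-weighted'' $(r+1)$-joint mass is $\Omega_r(1)$, I should instead directly translate that mass bound into a count. The cleanest route is to run the weighted joint lemma so that its conclusion is already a lower bound on the \emph{number} of $K_{r+1}$'s on a fixed edge in terms of the $x_k$'s, then lower-bound that number: $js_{r+1}(G) \ge (\text{weighted mass})\cdot(\text{appropriate power of }\|\bm x\|_\infty^{-1})$-type inequalities, combined with $\sum_k x_k^2 \le 1$ and the Kruskal--Katona / Erdős--Gallai style counting used for Theorem \ref{cor:joints}, to land $js_{r+1}(G) = \Omega_r(m^{(r-1)/2})$. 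For tightness, Example \ref{exam-booook}-style constructions generalise: take $K_s$ with $s \approx \sqrt{2(1-\frac1r)^{-1}\,\cdot}$ wait — rather, take a clique $K_s$ with $\binom s2 \approx \frac{r}{r-1}m/?$ chosen so $\lambda(K_s) = s-1 > \sqrt{(1-\frac1r)2m}$, attached to a sparse $K_{r+1}$-free graph (e.g. a Turán graph $T_{N,r}$) by vertex identification; then $js_{r+1}$ is governed by the clique and equals $\binom{s-2}{r-1} = O_r(s^{r-1}) = O_r(m^{(r-1)/2})$, while the spectral radius exceeds $\sqrt m \ge \sqrt{(1-\frac1r)2m}$ only if... — the constants need balancing, but a clique plus a spectrally negligible $K_{r+1}$-free dense piece gives the matching upper bound. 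I would double-check that the weighted Erdős joint lemma's implicit constant survives all the lossy steps, since that is the one place the whole chain could degrade below $\Omega_r(m^{(r-1)/2})$.
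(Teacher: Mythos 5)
Your blueprint is the same as the paper's (a weighted Erd\H{o}s joint lemma proved by random blow-up, applied with Perron weights, finished by Cauchy--Schwarz), but two concrete steps break down. The first is the choice of edge weights: you reuse the $r=2$ weights $p_{ij}=\max\{m^{-1/2}x_ix_j-(4m)^{-1},0\}/(x_i^2x_j^2)$, which forces you to need $\frac{\lambda}{2\sqrt m}-\frac14>\frac{r-1}{2r}$, i.e.\ $\lambda>\bigl(\frac{r-1}{r}+\frac12\bigr)\sqrt m$. Writing $u=\frac{r-1}{r}$, one has $\sqrt{2u}\le u+\frac12$ with equality only at $u=\frac12$, so the hypothesis $\lambda>\sqrt{2um}$ is strictly weaker than what you need for every $r\ge3$ (for $r=3$: you need $\frac76\sqrt m\approx1.167\sqrt m$ but are given only $\approx1.155\sqrt m$). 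Your proposed repair --- relaxing the constant in the weighted lemma --- cannot work, since $\frac{r-1}{2r}$ is the Tur\'an density below which the blow-up can be $K_{r+1}$-free; the correct fix is to retune the weights to the new threshold, as the paper does in Lemma \ref{lem:joints-Blowup}'s application: $p_{ij}=\max\bigl\{\sqrt{2(r-1)/(rm)}\,x_ix_j-\tfrac{r-1}{2rm},\,0\bigr\}/(x_i^2x_j^2)$, which lies in $[0,1]$ by AM--GM and gives $\sum p_{ij}w_iw_j>\frac{r-1}{2r}$ exactly from $\lambda>\sqrt{2m(r-1)/r}$. A smaller issue in the same lemma: the clique count on a fixed edge of the blow-up is a sum of \emph{dependent} products of edge indicators, so ``a union bound over the candidate $(r-1)$-sets'' is not a concentration mechanism; the paper uses the Kim--Vu polynomial concentration inequality (Lemma \ref{lem:KV-corollary}); a bounded-differences estimate would also suffice, but some such tool is needed.

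The more important gap is the endgame. Your chain expands the weighted mass over \emph{all} $(r-1)$-subsets of $B'(i,j)$, which only yields $\sum_{k\in B'(i,j)}x_k=\Omega_r(m^{1/4})$ and hence $|B'(i,j)|=\Omega_r(\sqrt m)$ --- a large common neighbourhood, from which no $(r+1)$-joint can be extracted in general, as you yourself note. The tool you then reach for, Kruskal--Katona, is an \emph{upper} bound on clique counts; in the paper it is used only to deduce Theorem \ref{cor:joints} from Theorem \ref{thm:joints}, not to lower bound the joint. The actual finish never loses the clique structure: the conclusion of the weighted lemma is a sum indexed by genuine $(r+1)$-cliques $C\in\mathcal C(i,j)$, each term is bounded by $O_r(1)\prod_{k\in C\setminus\{i,j\}}\frac{x_k}{\sqrt m\,x_i}$, so $\sum_{C\in\mathcal C(i,j)}\prod_{k\in C\setminus\{i,j\}}x_k\ge\Omega_r\bigl((\sqrt m\,x_i)^{r-1}\bigr)=\Omega_r(m^{(r-1)/4})$, and Cauchy--Schwarz applied to this clique-indexed sum, together with $\sum_{C}\prod_{k\in C\setminus\{i,j\}}x_k^2\le\bigl(\sum_kx_k^2\bigr)^{r-1}=1$, gives $|\mathcal C(i,j)|\ge\Omega_r(m^{(r-1)/2})$ directly; no extraction step exists or is needed. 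Your tightness sketch (a clique on about $\sqrt{2(1-1/r)m}$ vertices with a spectrally negligible $K_{r+1}$-free part attached at a vertex) is essentially the paper's modification of Example \ref{exam-booook} and is fine once the edge counts are balanced.
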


In particular, Theorem \ref{thm-booksize} is the special case $r = 2$.  
We shall postpone the proof of Theorem \ref{thm:joints}. 
As a byproduct, we first show that 
Theorem \ref{cor:joints} follows from Theorem \ref{thm:joints}  via an application of the Kruskal--Katona theorem. Moreover, we provide other  applications of 
Theorem \ref{thm:joints}. 

\begin{proof}[Proof of Theorem \ref{cor:joints}]
    Using Theorem \ref{thm:joints}, there exists an edge $\{u,v\}\in E(G)$ such that there are 
     $\Omega_r(m^{\frac{r-1}{2}})$ copies of $K_{r+1}$ in $G$ containing $\{u,v\}$. 
    The Kruskal--Katona theorem 
    implies that every 
    graph with $m$ edges has  $O_t(m^{\frac{t}{2}})$ copies of $K_t$ for every $t\ge 3$; see, e.g., \cite[p. 304]{Bol1978} 
    or \cite{Alon1981}. 
    In particular, 
    there are $O_r(m^{\frac{r-2}{2}})$ copies of $K_{r-2}$ in 
    $G[N(u)\cap N(v)]$. Observe that such a copy of 
    $K_{r-2}$ corresponds to a copy of $K_r$ containing 
    the edge $\{u,v\}$. 
    Thus, 
    there are $O_r(m^{\frac{r-2}{2}})$ copies of $K_r$ in $G$ containing $\{u,v\}$.  
    Recall that there are $\Omega_r(m^{\frac{r-1}{2}})$ copies of 
    $K_{r+1}$ containing $\{u,v\}$. As 
    each copy of $K_{r+1}$ contains $r+1$ copies of $K_r$, we can apply a double counting argument on pairs $(P,Q)$, where $P$ and $Q$ are $r$- and $(r + 1)$-cliques containing $\{u,v\}$ respectively with $P\subseteq Q$. Thus there exist $\Omega_r(\sqrt{m})$ copies of $K_{r+1}$ that share a common $r$-clique. The optimality on the order $\sqrt{m}$ follows from Example \ref{exam-booook} but with $s=\sqrt{(1-\frac{1}{r})2m} +1$ and $t$ about $\frac{m}{r}$ instead.
\end{proof}

%%%Yongtao: I added two another applications of Theorem 3.2 as below (after submission)

In what follows, we provide two more applications of Theorem \ref{thm:joints}. 
As the first one, we obtain the following corollary under the vertex-spectral condition. This corollary recovers a result of Nikiforov \cite{Niki2009ejc} by a quite different method.

\begin{cor}\label{cor:jsr}
    If $G$ is an $n$-vertex graph with 
    $\lambda (G)> (1- \frac{1}{r})n$, 
    then 
    \[ js_{r+1}(G)=\Omega_r(n^{r-1}). \]  
\end{cor}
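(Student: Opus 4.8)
The plan is to deduce Corollary~\ref{cor:jsr} directly from Theorem~\ref{thm:joints}: I would verify that the vertex-spectral hypothesis $\lambda(G) > (1-\frac1r)n$ forces the edge-spectral hypothesis $\lambda^2(G) > (1-\frac1r)2m$, apply Theorem~\ref{thm:joints} to obtain $js_{r+1}(G) = \Omega_r(m^{(r-1)/2})$, and then convert this into an $\Omega_r(n^{r-1})$ bound by observing that $G$ must have $\Omega_r(n^2)$ edges.

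First I would record two standard spectral inequalities. Testing the Rayleigh quotient on the all-ones vector gives $\lambda(G) \ge 2m/n$; note that this needs no connectivity assumption. And since $\sum_i \lambda_i^2 = \mathrm{tr}(A(G)^2) = 2m$ while $\lambda(G) = \lambda_1$ dominates every $\abs{\lambda_i}$, we have $\lambda^2(G) \le 2m$.

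Next, combining $\lambda(G) \ge 2m/n$ with the hypothesis:
\[ \frac{\lambda^2(G)}{2m} = \lambda(G)\cdot\frac{\lambda(G)}{2m} \ge \frac{\lambda(G)}{n} > 1 - \frac1r, \]
so $\lambda^2(G) > (1-\frac1r)2m$ and Theorem~\ref{thm:joints} yields $js_{r+1}(G) = \Omega_r(m^{(r-1)/2})$. Finally, from $2m \ge \lambda^2(G) > (1-\frac1r)^2 n^2$ I get $m > \frac{(1-1/r)^2}{2}\,n^2$, and substituting this into the previous bound gives $js_{r+1}(G) = \Omega_r(n^{r-1})$, as desired.

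Since every step is a one-line estimate, I do not expect any genuine obstacle; the only points requiring a little care are that the Rayleigh bound $\lambda(G) \ge 2m/n$ holds even when $G$ is disconnected, and that all implied constants depend only on $r$ (which they plainly do, tracing through Theorem~\ref{thm:joints} and the two displayed inequalities).
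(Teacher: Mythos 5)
Your proposal is correct and follows essentially the same route as the paper: combine $\lambda(G)\ge 2m/n$ with the hypothesis to get $\lambda^2(G)>(1-\frac1r)2m$, apply Theorem~\ref{thm:joints}, and then use $\lambda^2(G)\le 2m$ to convert $\Omega_r(m^{(r-1)/2})$ into $\Omega_r(n^{r-1})$. The only cosmetic difference is that you justify $\lambda^2(G)\le 2m$ via the trace of $A^2$, where the paper simply cites the standard bound $\lambda(G)<\sqrt{2m}$; the content is identical.
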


\begin{proof}
Invoking the fact $\lambda (G)\ge \frac{2m}{n}$, 
we have $\lambda^2(G)
> \frac{2m}{n} \cdot (1- \frac{1}{r})n = (1-\frac{1}{r})2m$. 
Then Theorem \ref{thm:joints} implies that 
$js_{r+1}(G)=\Omega_r(m^{\frac{r-1}{2}})$. 
Since $(1-\frac{1}{r})n < \lambda (G)< \sqrt{2m}$,  
we get $m\ge (1-\frac{1}{r})^2\frac{n^2}{2}$. 
Thus, it follows that $js_{r+1}(G)=\Omega_r(n^{r-1})$. 
\end{proof}

A result of Ning and Zhai \cite{NZ2021} implies that 
if $\lambda (G)> \sqrt{m}$, then $G$ contains 
$\Omega (\sqrt{m})$ triangles. 
As another application of Theorem \ref{thm:joints}, 
we now extend this result to the case of cliques. 

\begin{cor} \label{cor-extend-triangle}
If $G$ is an $m$-edge graph with $\lambda^2 (G)> (1-\frac{1}{r})2m$, then 
$G$ contains $\Omega_r (m^{\frac{r-1}{2}})$ 
copies of $K_{r+1}$, and this bound is tight up to a constant factor. 
\end{cor}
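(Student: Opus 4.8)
The plan is to deduce the clique count directly from Theorem \ref{thm:joints}, which already provides the bulk of the work. Since $\lambda^2(G) > (1-\frac{1}{r})2m$, Theorem \ref{thm:joints} guarantees an edge $\{u,v\} \in E(G)$ lying in $\Omega_r(m^{\frac{r-1}{2}})$ copies of $K_{r+1}$. Each such copy of $K_{r+1}$ is in particular a copy of $K_{r+1}$ in $G$, so the lower bound $\Omega_r(m^{\frac{r-1}{2}})$ on the number of $K_{r+1}$'s in $G$ is immediate. No further argument is needed for the lower bound — it is strictly weaker than the joint statement.

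For the tightness, I would revisit Example \ref{exam-booook} (with the adjusted parameters $s = \sqrt{(1-\frac{1}{r})2m} + 1$ and $t \approx m/r$, as used in the proof of Theorem \ref{cor:joints}), or more simply take $G$ to be a graph consisting of a clique $K_s$ on $s \approx \sqrt{(1-\frac{1}{r})2m}$ vertices together with a sparse near-regular bipartite (or $K_{r+1}$-free) part sharing a single vertex, arranged so that $\lambda(G) > \sqrt{m}$ while $e(G) = m$. Such a graph $G$ has $\lambda^2(G) = \lambda^2(K_s) + o(m) = (s-1)^2 + o(m) > (1-\frac{1}{r})2m$ for the right choice of parameters, and the number of copies of $K_{r+1}$ in it is $\binom{s}{r+1} + (\text{copies inside the sparse part}) = O_r(s^{r+1}) = O_r(m^{\frac{r+1}{2}})$. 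Wait — this is the wrong exponent; the tightness claim is $\Theta_r(m^{\frac{r-1}{2}})$, so the extremal construction must instead have only $O_r(m^{\frac{r-1}{2}})$ copies of $K_{r+1}$. The correct construction is the one in Example \ref{exam-KST} generalised: take the complete bipartite-like graph with one part blown up appropriately, or rather a copy of $K_{r+1,t}$-type graph with a small clique added, so that the number of $K_{r+1}$'s is governed by the number of ways to pick $r+1$ vertices from the small side, giving $O_r(t) = O_r(m^{\frac{r-1}{2}})$ when the small side has size $\Theta_r(\sqrt{m})$ — this matches the construction already invoked for the tightness of Theorem \ref{thm:joints}, so I would simply point to that same example, observing that every $K_{r+1}$ in it uses the fixed $(r-1)$-clique on the small side and hence the count is $O_r(m^{\frac{r-1}{2}})$.

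The only real content is bookkeeping: verifying that the construction witnessing tightness of $js_{r+1}$ in Theorem \ref{thm:joints} also has only $O_r(m^{\frac{r-1}{2}})$ copies of $K_{r+1}$ in total (not merely in a single joint). I expect this to be routine, since in that construction essentially all $K_{r+1}$'s pass through a common structure; the main obstacle, if any, is just choosing the cleanest construction to cite rather than re-deriving the spectral radius estimate. Thus the corollary follows with almost no extra work beyond Theorem \ref{thm:joints}.
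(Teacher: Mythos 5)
Your lower bound argument is exactly the paper's: Theorem~\ref{thm:joints} produces an edge lying in $\Omega_r(m^{\frac{r-1}{2}})$ copies of $K_{r+1}$, and these are in particular that many distinct copies of $K_{r+1}$ in $G$. That half is fine.

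The tightness half has a genuine gap: you never exhibit a valid construction. You correctly observe that the graph $K_s \circ H$ from Example~\ref{exam-booook} (the one used for the tightness of Theorem~\ref{thm:joints}) contains $\binom{s}{r+1}=\Theta_r(m^{\frac{r+1}{2}})$ copies of $K_{r+1}$ in total, so it only witnesses tightness of the \emph{joint} count, not of the total count needed here --- yet your final sentence proposes to ``simply point to that same example,'' which contradicts your own computation. The alternative you gesture at (``a $K_{r+1,t}$-type graph with a small clique added, every $K_{r+1}$ using a fixed $(r-1)$-clique on the small side'') is not pinned down and cannot be repaired as described: if the clique placed inside one side of a complete bipartite graph has order $r-1$, the graph is $K_{r+1}$-free and then Nikiforov's theorem forces $\lambda^2(G)\le (1-\frac{1}{r})2m$, so the hypothesis fails; and for $r\ge 3$ any graph that is essentially $K_{s,t}$ plus a bounded clique has $\lambda^2(G)= st+O_r(\sqrt{st})\approx m$, which is strictly below $(1-\frac{1}{r})2m\ge \frac{4}{3}m-O_r(1)$, so the spectral hypothesis again fails. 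In other words, a bipartite-dominated example can only work when $r=2$; the correct generalisation of Example~\ref{exam-KST} is the one the paper uses: take the balanced complete $r$-partite graph $K_{t,\dots,t}$ and add one edge inside a part. Then $m=\binom{r}{2}t^2+1=(1-\frac{1}{r})\frac{n^2}{2}+1$, so $\lambda(G)\ge \frac{2m}{n}$ forces $\lambda^2(G)>(1-\frac{1}{r})2m$ (otherwise $m\le (1-\frac{1}{r})\frac{n^2}{2}$, a contradiction), while every $K_{r+1}$ must contain the added edge together with one vertex from each of the other $r-1$ parts, giving at most $t^{r-1}=O_r(m^{\frac{r-1}{2}})$ copies. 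You also omit any verification of the spectral hypothesis for your proposed examples, which, as the above shows, is precisely where they break down.
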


\begin{proof}
    From Theorem \ref{thm:joints}, it follows that $G$ has $\Omega_r (m^{\frac{r-1}{2}})$ copies of $K_{r+1}$. For the tightness, let $G$ be the graph by embedding an edge into a partite set of the complete $r$-partite graph $K_{t,t,\ldots,t}$. 
    Observe that $m={r \choose 2}t^2+1=(1-\frac{1}{r})\frac{n^2}{2}+1$. 
    We claim that $\lambda^2 (G) >(1- \frac{1}{r})2m $. Otherwise, if $\lambda^2(G) \le {(1-\frac{1}{r})2m}$, then combining with $\lambda (G)\ge \frac{2m}{n}$, we obtain $m\le (1-\frac{1}{r})\frac{n^2}{2}$, a contradiction. 
    Note that $t=\sqrt{(m-1)/{r\choose 2}}$. So $G$ contains at most $t^{r-1}=O_r(m^{\frac{r-1}{2}})$ copies of $K_{r+1}$. 
\end{proof}

\subsection{Proof of Theorem \ref{thm:joints}}

Extending the proof of Theorem \ref{thm-booksize}, 
we replace Hoeffding's inequality with the Kim--Vu polynomial concentration inequality \cite{KV2000}, which is a celebrated result for proving the concentration of subgraph counts. We state a directly applicable corollary of Kim--Vu's inequality.
\begin{lemma}
\label{lem:KV-corollary}
Let $V$ be a set of at most $N$ vertices. Let $u, v$ be any two distinct vertices in $V$, and let $\C C$ be a family of $(r + 1)$-element sets such that each $C \in \C C$ contains $u$ and $v$. Let $G$ be a random graph on $V$ where each edge is included independently, possibly with different probability. Let $X$ be the random variable counting the number of $C \in \C C$ such that $G[C]$ is a clique. Let $\EE'(X)$ denote the expectation of $X$ conditioned on $\{u,v\} \in E$. We have
$$\mathrm{Pr}\left(X - \EE'(X) \geq N^{r - 5/4} \right) \leq \exp(-N^{\Omega_r(1)}).$$
\end{lemma}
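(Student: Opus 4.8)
The plan is to deduce this from the Kim--Vu polynomial concentration inequality \cite{KV2000}, which controls the deviation of low-degree multilinear polynomials of independent Bernoulli variables. First I would set up the polynomial: condition on the event $\{u,v\} \in E$ and view $X$ as a polynomial $Y = \sum_{C \in \C C} \prod_{e \in E(G[C]) \setminus \{uv\}} \xi_e$ in the independent indicator variables $\xi_e$ of the remaining $\binom{r+1}{2}-1$ possible edges of each clique. This is a multilinear polynomial of degree $k = \binom{r+1}{2}-1 = \frac{r^2+r-2}{2}$, a constant depending only on $r$. The Kim--Vu inequality bounds $\Pr(|Y - \EE Y| > \lambda)$ in terms of the partial-derivative expectations $\E_j(Y) = \max_{|S|=j} \EE\bigl[\partial_S Y\bigr]$ for $j=0,1,\dots,k$, where $\partial_S Y$ is the derivative with respect to the variables indexed by $S$; concretely $\Pr\bigl(|Y - \EE Y| > a_k \sqrt{\E_0 \E_1}\, \lambda^k\bigr) = O\bigl(e^{-\lambda + (k-1)\log N}\bigr)$ for a constant $a_k$.

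The second step is to estimate the quantities $\E_j(Y)$. Since every edge probability lies in $[0,1]$, each monomial has expectation at most $1$, and a derivative $\partial_S Y$ with $|S| = j$ counts (a subset of) the cliques $C \in \C C$ whose edge set, minus $uv$, contains the $j$ edges of $S$; after fixing $j$ edges among the vertices of a prospective clique, the number of ways to complete to an $(r+1)$-set through $u,v$ is at most $N^{r-1-o(1)}$ in the worst case, and in general $\E_j(Y) \le N^{r-1}$ for all $j \ge 0$ (with $\E_0(Y) = \EE Y \le |\C C| \le N^{r-1}$ as well). Plugging $\E_0, \E_1 \le N^{r-1}$ into the Kim--Vu bound gives a deviation of order $\sqrt{\E_0 \E_1}\,\lambda^k \le N^{r-1} \lambda^k$; choosing $\lambda = N^{c}$ for a small enough constant $c = c(r) > 0$ makes the deviation at most $N^{r-1+kc}$, which we want to be at most $N^{r-5/4}$. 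Here lies the only real subtlety: the naive bound $N^{r-1}$ on $\E_0 \E_1$ is too large, since $N^{r-1} \cdot N^{kc}$ exceeds $N^{r-5/4}$. The fix is to observe that we are free to \emph{discard} cliques: if $|\C C| > N^{r-5/4}$ we may simply delete cliques down to exactly $\lceil N^{r-5/4}\rceil$ of them, which only decreases $X$, so it suffices to prove the concentration assuming $|\C C| \le N^{r-5/4}$; then $\E_0(Y) \le N^{r-5/4}$ and $\E_j(Y) \le N^{r-5/4}$ for every $j$ (a derivative has no more terms than $Y$ itself). Now $\sqrt{\E_0\E_1}\,\lambda^k \le N^{r-5/4} \lambda^k$, and with $\lambda = N^{c}$ for $c$ small we keep the deviation below, say, $N^{r-5/4 + kc} \ll N^{r-5/4}$ — wait, that still needs $kc$ absorbed; more carefully, I would take the target deviation $N^{r-5/4}$ and solve: we need $a_k N^{(r-5/4)} \lambda^k \le N^{r-5/4}$, i.e. $\lambda \le (a_k)^{-1/k}$, a constant, which already gives failure probability $e^{-\Omega_r(1) \cdot (\text{const}) + (k-1)\log N}$ — not small enough. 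So instead I would aim the Kim--Vu deviation term at $N^{r-5/4}$ directly via $\lambda^k = N^{r-5/4}/(a_k \sqrt{\E_0\E_1}) \ge N^{r-5/4}/(a_k N^{r-5/4}) $...

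Let me restate the clean version: after truncating to $|\C C| \le N^{r - 5/4}$, all $\E_j(Y) \le N^{r-5/4}$, so in particular $\sqrt{\E_0 \E_1} \le N^{r-5/4}$; apply Kim--Vu with $\lambda = (\log N)^2$, giving $\Pr\bigl(|Y - \EE Y| > a_k N^{r-5/4} (\log N)^{2k}\bigr) \le O(e^{-(\log N)^2 + (k-1)\log N}) = e^{-N^{\Omega_r(1)}}$ — here the polylog factor $(\log N)^{2k}$ is absorbed into the gap between $N^{r-5/4}$ and the slightly smaller exponent we actually have room for: since $|\C C| \le N^{r-5/4}$ can be taken to be $N^{r - 5/4 - \delta}$ for any fixed small $\delta$ by truncating a bit more aggressively (this only strengthens the conclusion), $N^{r-5/4-\delta} (\log N)^{2k} \le N^{r-5/4}$ for large $N$, and $\EE Y \le N^{r-5/4-\delta}$ as well, so $X \le \EE'(X) + N^{r-5/4}$ holds off a set of probability $e^{-N^{\Omega_r(1)}}$, which is exactly the claim after renaming. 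The main obstacle is therefore purely bookkeeping: correctly bounding the Kim--Vu partial-derivative quantities $\E_j(Y)$ and exploiting the freedom to prune $\C C$ so that the dominant term $\sqrt{\E_0\E_1}$ in the Kim--Vu bound is controlled by $|\C C|$ rather than the trivial $N^{r-1}$; once that is in place, choosing $\lambda$ polylogarithmic finishes everything. I would also note that the degree $k$ and all implied constants depend only on $r$, which is what the $\Omega_r(1)$ in the exponent records.
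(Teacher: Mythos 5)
Your setup matches the paper's — condition on $\{u,v\}\in E$, write $X$ as a multilinear polynomial of degree $k=\binom{r+1}{2}-1$ in the remaining independent edge indicators, and apply Kim--Vu — but the one estimate that makes this work is missing, and your substitute for it fails. The reason the exponent $r-5/4$ is achievable is that it lies strictly between $r-3/2$ and $r-1$: since every $C\in\mathcal{C}$ contains $u$ and $v$, any partial derivative of order at least one (with respect to an edge $e\neq \{u,v\}$) counts only cliques containing $u$, $v$ and at least one endpoint of $e$ outside $\{u,v\}$, so its expectation is at most $N^{r-2}$, not $N^{r-1}$. Hence $\sqrt{\mathcal{E}_0\,\mathcal{E}'}\leq\sqrt{N^{r-1}\cdot N^{r-2}}=N^{r-3/2}$, and Kim--Vu with a polynomially large parameter (the paper takes $\lambda=N^{1/(100k)}$) gives deviation $N^{r-3/2+o(1)}\leq N^{r-5/4}$ with failure probability $\exp(-N^{\Omega_r(1)})$. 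You bounded every derivative quantity only by $N^{r-1}$ (your ``at most $N^{r-1-o(1)}$ in the worst case'') and thereby lost exactly the factor the lemma needs.

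Your attempted repair — pruning $\mathcal{C}$ down to $N^{r-5/4}$ (or $N^{r-5/4-\delta}$) cliques — is not a valid reduction. Deleting cliques decreases both $X$ and $\EE'(X)$, and the event $\{X-\EE'(X)\geq N^{r-5/4}\}$ is not monotone under this operation: the discarded part $X-X'$ is itself a random variable whose deviation from its conditional mean must still be controlled, and it may involve up to $\binom{N-2}{r-1}\approx N^{r-1}$ cliques. Worse, the statement you prove after truncation is vacuous: once $\lvert\mathcal{C}'\rvert\leq N^{r-5/4-\delta}$ one has $X'\leq\lvert\mathcal{C}'\rvert\leq N^{r-5/4}$ deterministically, so no concentration inequality is needed and nothing about the original family is recovered. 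A partition-into-groups variant fails for the same reason: with only the trivial bounds $\mathcal{E}_j\leq\lvert\mathcal{C}_i\rvert$, the per-group deviation guaranteed by Kim--Vu is of order $\lvert\mathcal{C}_i\rvert$, and summing over the roughly $N^{1/4}$ groups gives nothing better than $N^{r-1}$. The missing idea is precisely the improved bound $\mathcal{E}'\leq N^{r-2}$ coming from the fact that all cliques in $\mathcal{C}$ pass through the fixed pair $u,v$.
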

\begin{proof}
    Note that $\abs{C} \leq N^{r - 1}$. If $\{u,v\} \notin E$, we have $X = 0$. So we condition on the event $\{u,v\} \in E$. 
   We denote $k = \binom{r + 1}{2} - 1$ and  consider the $k$-uniform hypergraph $H$, with vertex set $V(H) = \binom{V}{2} - \{u, v\}$ and edge set $\C E(H) = \{\{e \in V(H): e \subset C\}: C \in \C C\}$. Conditioned on $\{u,v\} \in E(G)$, we have
    $$X = \sum_{e \in E(H)} \prod_{i \in e} t_i$$
    where $t_i$ is the indicator variable denoting whether edge $i \in \binom{V}{2}$ lies in $G$. By assumption, $t_i$ are independent Bernoulli random variables. Furthermore, for any $i \in V(H)$, the number of $C \in \C C$ containing $i$ is at most $N^{r - 2}$, since any such $C$ must contain vertices $u, v$ and at least one other element of $i$. In the language of \cite{KV2000}, we have
    $$\EE(X) \leq N^{r - 1}, \quad \EE'(X) \leq N^{r - 2}.$$
    Applying the main theorem of \cite{KV2000} with $\lambda = N^{1 / 100k}$, we conclude the desired result.
\end{proof}

The proof of Theorem \ref{thm:joints} 
proceeds via the following weighted version of Lemma \ref{thm-Erd-joint}, whose proof is analogous to Lemma \ref{lem:Edwards-Blowup}. 
For completeness, we shall include a detailed proof. 

\begin{lemma}
    \label{lem:joints-Blowup}
    Let $G=(V,E)$ be a graph with vertex weights $w_i \in [0, \infty)$ and edge weights $p_{ij} \in [0, 1]$. Suppose that $\sum_{i\in V} w_i = 1$ and 
    $$\sum_{\{i,j\} \in E} p_{ij}w_i w_j > \frac{r - 1}{2r}.$$
    Then there exists an edge $\{i,j\} \in E$ with $p_{ij} > 0$, such that if $\C C(i, j)$ denotes the set of $(r + 1)$-cliques in $G$ containing $\{i, j\}$, then
    $$\sum_{C \in \C C(i, j)} \prod_{k \in C\setminus \{i,j\}} w_{k} 
    \prod_{\substack{\{s,t\}\subseteq C \\
      \{s,t\} \neq \{i,j\} }} p_{st} = \Omega_r(1).$$
\end{lemma}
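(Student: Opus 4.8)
The plan is to mimic the random-blowup argument used for Lemma~\ref{lem:Edwards-Blowup}, replacing the input Lemma~\ref{lem-Edw} with the Erd\H{o}s--Bollob\'as--Nikiforov joint bound Lemma~\ref{thm-Erd-joint} and replacing Hoeffding's inequality with the Kim--Vu corollary Lemma~\ref{lem:KV-corollary}. Concretely: given the weighted graph $G=(V,E)$ with $\sum_i w_i = 1$ and $\sum_{\{i,j\}\in E} p_{ij}w_iw_j > \frac{r-1}{2r}$, pick a large integer $N$, replace each vertex $i$ by an independent set $V_i$ of size $\floor{w_i N}$, and for each edge $\{i,j\}\in E$ include every potential edge between $V_i$ and $V_j$ independently with probability $p_{ij}$. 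Call the resulting random graph $\tilde G$; it has at most $N$ vertices.

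First I would verify that $\tilde G$ is dense enough to apply Lemma~\ref{thm-Erd-joint}. The expected number of edges is $\sum_{\{i,j\}\in E} p_{ij}\abs{V_i}\abs{V_j}$, which is at least $N^2 \sum_{\{i,j\}\in E} p_{ij}w_iw_j - O(N) > (1-\frac1r)\frac{N^2}{2}$ for $N$ large, since the hypothesis gives strict inequality against $\frac{r-1}{2r} = (1-\frac1r)\frac12$. A Chernoff/Hoeffding bound (as in the proof of Lemma~\ref{lem:Edwards-Blowup}) shows this holds with probability $1 - e^{-\Omega(N)}$. Then Lemma~\ref{thm-Erd-joint} gives $js_{r+1}(\tilde G) = \Omega_r(N^{r-1})$ with the same probability, i.e. some edge $\{a,b\}$ of $\tilde G$ lies in $\Omega_r(N^{r-1})$ copies of $K_{r+1}$.

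Next I would translate this joint back down to a weighted count in $G$. The edge $\{a,b\}$ lies in some $V_i\times V_j$ with $\{i,j\}\in E$ and $p_{ij}>0$. Fix any $u\in V_i$, $v\in V_j$; a set $C'=\{u,v,w_1,\dots,w_{r-1}\}$ in $\tilde G$ with $w_\ell\in V_{k_\ell}$ forms a clique only if $C=\{i,j,k_1,\dots,k_{r-1}\}$ is an $(r+1)$-clique of $G$, and then it is a clique with probability $\prod_{k\in C\setminus\{i,j\}} (\text{number of choices weighted})\cdots$ — more precisely, the expected number, over choices of $w_\ell$ and of which edges are present, of cliques $C'$ through a fixed $(u,v)$ is at most $N^{r-1}\sum_{C\in\C C(i,j)} \prod_{k\in C\setminus\{i,j\}} w_k \prod_{\{s,t\}\subseteq C,\,\{s,t\}\neq\{i,j\}} p_{st}$. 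Applying Lemma~\ref{lem:KV-corollary} with $V = V(\tilde G)$, the pair $(u,v)$, and $\C C$ the family of $(r+1)$-subsets of $V(\tilde G)$ of the relevant shape, concentration gives that the actual count is at most this expectation plus $N^{r-5/4}$, with probability $1 - e^{-N^{\Omega_r(1)}}$; a union bound over all pairs $(u,v)$ (there are at most $N^2$) keeps this whp. Combining: $\Omega_r(N^{r-1}) \le js_{r+1}(\tilde G) \le N^{r-1}\max_{\{i,j\}\in E, p_{ij}>0}\sum_{C\in\C C(i,j)}\prod\cdots + N^{r-5/4}$, so dividing by $N^{r-1}$ and letting $N\to\infty$ yields the desired $\Omega_r(1)$ lower bound on the weighted clique count for some edge $\{i,j\}$ with $p_{ij}>0$.

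The step I expect to be the main obstacle is the precise bookkeeping in the pull-back: one must carefully account for the fact that a single $(r+1)$-clique $C$ of $G$ can be ``used'' by the $w_\ell$'s lying in any of the $\prod \floor{w_{k_\ell}N}$ blow-up positions, and that the edge-presence probabilities multiply to $\prod_{\{s,t\}\neq\{i,j\}} p_{st}$ (the edge $\{a,b\}$ itself is already present), so the bound $\EE'(X) \le N^{r-2}$ required by Lemma~\ref{lem:KV-corollary} and the main $\EE(X)\le N^{r-1}$ bound both need to be checked against the correct combinatorial shape of $\C C$; one also has to be slightly careful that $js_{r+1}(\tilde G)$ counts cliques through a \emph{single} edge, matching the per-pair analysis. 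Everything else is a routine adaptation of the proof of Lemma~\ref{lem:Edwards-Blowup}.
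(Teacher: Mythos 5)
Your proposal is correct and follows essentially the same route as the paper: the same random blowup with parts of size $\floor{w_i N}$ and edge probabilities $p_{ij}$, Hoeffding to certify density so that Lemma~\ref{thm-Erd-joint} gives $js_{r+1}(\Tilde{G}) = \Omega_r(N^{r-1})$, the Kim--Vu corollary (Lemma~\ref{lem:KV-corollary}) with a union bound over pairs $(u,v)$ to upper-bound the clique count through any blowup edge by $N^{r-1}$ times the weighted sum plus $O(N^{r-5/4})$, and then letting $N \to \infty$. The bookkeeping concerns you flag are exactly the points the paper handles, and they work out as you expect.
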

Taking $p_{ij}\equiv 1$ and $w_i \equiv {1}/{n}$ recovers Lemma \ref{thm-Erd-joint}.

\begin{proof} 
Let $N$ be a sufficiently large integer. 
    We construct a random graph $\Tilde{G}$ as follows. Each vertex $i \in V(G)$ is blown up into an independent set $V_i$ of size $\floor{w_i N}$, and the edge set of $\Tilde{G}$ is as follows: for each $\{i,j\} \in E(G)$, we put an edge independently with probability $p_{ij}$ between each pair of vertices in $V_i$ and $V_j$. Note that $\Tilde{G}$ has at most $N$ vertices. By~\Cref{lem:Cher}, with probability $1 - e^{-\Omega(N)}$ the number of edges in $\Tilde{G}$ is at least $\sum_{\{i,j\} \in E(G)} p_{ij} \abs{V_i} \abs{V_j} - N^{3/2} 
    > \frac{r-1}{2r}N^2.$
    By~\Cref{thm-Erd-joint}, 
    with probability $1 - e^{-\Omega(N)}$ 
    we have $js_{r+1}(\Tilde{G}) = \Omega_r (N^{r-1})$. 
    
    On the other hand, note that edges in $\Tilde{G}$ can exist between $V_i$ and $V_j$ only if $\{i,j\} \in E(G)$ and $ p(i, j) > 0$. 
    Consider any $\{i,j\} \in E(G)$ and $(u,v)\in V_i\times V_j$. Conditioned on $\{u, v\} \in E(\Tilde{G})$, the expected number of $(r+1)$-cliques in $\Tilde{G}$ containing $\{u, v\}$ is
    $$\sum_{C \in \C C(i, j)} \prod_{k\in C\setminus \{i,j\}} \abs{V_k} \cdot \prod_{ \substack{\{s,t\}\subseteq C \\
      \{s,t\} \neq \{i,j\} } } p_{st}
    \leq N^{r-1} \cdot \sum_{C \in \C C(i, j)}  
    \prod_{k\in C\setminus \{i,j\}} w_k \cdot 
    \prod_{\substack{\{s,t\}\subseteq C \\
      \{s,t\} \neq \{i,j\} } } p_{st}.$$
  We apply~\Cref{lem:KV-corollary}. With probability $1 - \exp(-N^{\Omega_r(1)})$, the number of $(r+1)$-cliques in $\Tilde{G}$ containing $\{u, v\}$ is at most 
    $$N^{r-1} \cdot \sum_{C \in \C C(i, j)}  
    \prod_{k\in C\setminus \{i,j\}} w_k \cdot 
    \prod_{\substack{\{s,t\}\subseteq C \\
      \{s,t\} \neq \{i,j\} } } p_{st} + O(N^{r- \frac{5}{4}}).$$
    Taking the union bound over all pairs of vertices, with probability $1 - \exp(-N^{\Omega_r(1)})$ we have
    $$N^{r-1} \cdot 
    \max_{\substack{\{i,j\}\in E(G)\\ p_{ij}>0}} 
    \sum_{C \in \C C(i, j)}  
    \prod_{k\in C\setminus \{i,j\}} w_k \cdot 
    \prod_{\substack{\{s,t\}\subseteq C \\
      \{s,t\} \neq \{i,j\} } } p_{st} + O(N^{r-5/4})\ge js_{r+1}(\Tilde{G})
    > \Omega_r(N^{r-1}).$$
    Since this holds for any sufficiently large $N$, we 
    obtain the desired bound. 
\end{proof}

Now, we present the proof of Theorem \ref{thm-jsr+1}.

\begin{proof}[Proof of Theorem \ref{thm-jsr+1}] 
Analogous to the proof of Theorem \ref{thm-booksize} we may assume that $G$ is connected, so $x_i > 0$ for every $i \in V$. 

We define the vertex weights $w_i := x_i^2$ and the edge weights 
$$p_{ij} := \frac{\max\left\{ \sqrt{2(r - 1)/(rm)} x_i x_j - (r - 1)/(2 r m), 0\right\}}{x_i^2 x_j^2}.$$
By the assumption $\lambda (G) > \sqrt{2m(r-1)/r } $, it is easy to check that
$$\sum_{\{i, j\} \in E} p_{ij} w_i w_j 
\ge \sqrt{\frac{2(r-1)}{rm}} \sum_{\{i,j\}\in E} x_ix_j 
- \frac{r-1}{2r} 
> \frac{r - 1}{2r}.$$
By Lemma \ref{lem:joints-Blowup}, there exist two vertices $i, j$ with $\{i,j\} \in E(G)$ and $p_{ij} > 0$ such that
$$\sum_{C \in \C C(i, j)} \prod_{k \in C\setminus \{i,j\}} w_{k} 
\prod_{\substack{\{s,t\}\subseteq C \\
      \{s,t\} \neq \{i,j\} }} p_{st} =  \Omega_r(1).$$
As $p_{ij} > 0$,  we have 
$x_ix_j = \Omega_r(m^{-1/2})$. Thus, 
 we may assume that 
\[ x_i = \Omega_r( m^{-1/4}). \] 
 For each $C \in \C C(i, j)$, we have 
$$\prod_{k \in C\setminus \{i,j\}} w_{k} 
\prod_{\substack{\{s,t\}\subseteq C \\
      \{s,t\} \neq \{i,j\} }} p_{st} \leq 
      \prod_{k \in C\setminus \{i,j\}} w_{k}p_{ik} = O_r(1) 
      \prod_{k \in C\setminus \{i,j\}} \frac{x_k}{\sqrt{m}x_i}.$$
Then we have
$$\sum_{C \in \C C(i, j)} \prod_{k \in C\setminus \{i,j\}} 
x_k \ge \Omega_r(1) \cdot (\sqrt{m} x_i)^{r-1} 
= \Omega_r(m^{\frac{r-1}{4}}).$$
By the Cauchy--Schwarz inequality, we have
$$\left(\sum_{C \in \C C(i, j)} \prod_{k \in C\setminus \{i,j\}} x_k\right)^2 \leq 
\abs{C(i, j)} \left(\sum_{C \in \C C(i, j)} 
\prod_{k \in C\setminus \{i,j\}} x_k^2\right)\le \abs{C(i, j)}\left(\sum_{k} x_k^2\right)^{r-1} = \abs{C(i, j)},$$
where we used $\sum_{k} x_k^2 =1$. 
So we conclude that
$\abs{C(i, j)} \ge \Omega_r (m^{\frac{r - 1}{2}})$, 
as desired. 

The tightness of $js_{r+1}(G)=\Omega_r(m^{\frac{r-1}{2}})$ can be seen by modifying the graphs in Example \ref{exam-booook}. We
consider the graph $G=K_{s}\circ H$ with $s=\sqrt{(1-\frac{1}{r})2m} +1$ and $t\approx \frac{m}{r}$. Clearly, we have 
$\lambda (G)> \sqrt{(1- \frac{1}{r})2m}$, and $G$ has $O_r(m^{\frac{r-1}{2}})$ copies of $K_{r+1}$ sharing a common edge. 
\end{proof}

\section{Proofs of Theorems \ref{thm:lim} -- \ref{lem:Nosal-structure}}

%\textbf{TODO: Change title to Theorem 1.6 -- 1.8}

\label{sec-4} 

Throughout this section, let $\# C_4$ denote the number of copies of $C_4$ in the Nosal graph $G$. For vertex sets $A$, let $G[A]$ denote the induced subgraph, and let $E[A]$ be the edge set of $G[A]$. Similarly, for disjoint vertex sets $A$ and $B$, let $G[A, B]$ denote the induced bipartite subgraph, and let $E[A, B]$ be the edge set of $G[A, B]$.

To prove \Cref{thm:lim}, 
we first present two constructions that show the upper bound 
\[ f(m) \leq \left( \frac{1}{8} + o(1)\right)m^2. \] 

\begin{example} \label{exam-C4-1}
Let $s= \ceil{\sqrt{m}} +1$  and $t=m- {s \choose 2}$.  
Let $G = K_s\circ K_{1,t}$ be the graph obtained 
from the complete graph $K_s$ by adding $t$ pendant edges to one of the vertice. We have $\lambda(G) > \lambda (K_s) = s - 1 \geq \sqrt{m}$. 
 Observe that $G$ contains ${s \choose 4}$ 
 copies of $K_4$ and each $K_4$ contains three copies of $C_4$. 
 Thus the number of copies of $C_4$ in $G$ is $3 {s\choose 4} =  \frac{1}{8}m^2 + O(m^{3/2})$, where the error term oscillates between $\pm \frac{1}{4} m^{3/2}$. In fact, we can start with the complete graph $K_{s}$ and distribute the remaining $t \approx \frac{m}{2}$ edges arbitrarily 
 as long as they do not produce a new copy of $C_4$.  
 \end{example}

\begin{example}  \label{exam-C4-2}
Assume $m$ is odd. Let $K_2 \vee \frac{m-1}{2}K_{1}$ be the graph obtained from 
an edge $K_2$ and an independent set
$\frac{m-1}{2}K_1$ 
by joining each vertex of $K_2$ to each vertex of $\frac{m-1}{2}K_1$. Upon computation, we know that 
$\lambda (K_2 \vee \frac{m-1}{2}K_1) = \frac{1+\sqrt{4m-3}}{2} > 
\sqrt{m}$. For this graph, we have $\# C_4={\frac{m-1}{2} \choose 2}= \frac{1}{8}(m-1)(m-3)$.  

In fact,  for every $m$ there is a similar construction with fewer copies of $C_4$.
Let $G$ be the graph obtained from 
$K_2\vee \frac{m-t-1}{2}K_1$ by adding $t$ pendent edges to 
one vertex of $K_2$. 
Then $\lambda (G) > 
\lambda (K_2\vee \frac{m-t-1}{2}K_1) = 
\frac{1}{2}\bigl(1+\sqrt{4(m-t)-3}\, \bigr) > \sqrt{m}$ for every $m> (t+1)^2$. Taking $t$ to be the largest integer less than $\floor{\sqrt{m}-1}$ such that $m - t - 1$ is even, we have $\#C_4 = {\frac{m-t-1}{2} \choose 2}= 
\frac{1}{8}m^2 - \frac{1}{4}m^{3/2} + O(m)$. We conclude the upper bound
$f(m) \leq \frac{1}{8}m^2 - \frac{1}{4}m^{3/2} + O(m)$. 
\end{example}

We now prove the lower bound part of Theorem \ref{thm:lim} assuming~\Cref{lem:Nosal-max-degree,lem:Nosal-structure}. As outlined before, we shall use the fourth moment of the eigenvalues of $G$ to count $C_4$, as shown in the following lemma.
\begin{lemma}[See \cite{LL2009}] 
\label{lem:LL-counting}
Let $G$ be a graph on $n$ vertices. Writing $M(G):= \sum_{i \in V} d_i^2$, we have 
\[ \# C_4 = \frac{1}{8}\sum_{i\in V}  (\lambda_i^4 +\lambda_i^2)
       - \frac{1}{4} M(G). \]
\end{lemma}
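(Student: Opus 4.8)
\textbf{Proof proposal for Lemma \ref{lem:LL-counting}.}

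The plan is to compute $\sum_{i \in V} \lambda_i^4$, the fourth moment of the eigenvalues, as the trace of $A(G)^4$, and to interpret this combinatorially as the number of closed walks of length $4$ in $G$. First I would recall that $\operatorname{tr}(A^k) = \sum_i \lambda_i^k$ counts closed walks of length $k$, since the $(v,v)$ entry of $A^k$ counts closed walks of length $k$ starting and ending at $v$. So the whole argument reduces to a careful classification of closed $4$-walks $v_0 \to v_1 \to v_2 \to v_3 \to v_0$ in $G$ according to how many distinct vertices they use.

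The key step is the case analysis on the ``shape'' of a closed $4$-walk. Such a walk uses at most $4$ vertices, and I would split into: (i) walks using exactly two distinct vertices, which are of the form $a \to b \to a \to b \to a$ with $\{a,b\} \in E$ — there are $2 \cdot 2m = 4m$ of these (choosing the ordered edge and noting the walk is determined); (ii) walks using exactly three distinct vertices, which must trace a ``cherry'' $a \to b \to a \to c \to a$ or a rotation thereof, i.e. they go back and forth through a central vertex; counting by the central vertex and the two (ordered, possibly-equal-to-each-other-excluded) neighbors gives a contribution governed by $\sum_i d_i(d_i-1) = M(G) - 2m$, with an appropriate multiplicity for where the walk starts; (iii) walks using exactly four distinct vertices, which trace precisely a $4$-cycle, and each (unlabelled) $C_4$ gives rise to $8$ such closed walks ($4$ choices of starting vertex $\times\, 2$ directions). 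I would tabulate the exact multiplicities so that
\[ \sum_{i \in V} \lambda_i^4 = 8 \cdot \#C_4 + (\text{multiple of } M(G)) + (\text{multiple of } m), \]
and separately use $\sum_i \lambda_i^2 = \operatorname{tr}(A^2) = 2m$ to handle the $\lambda_i^2$ term. Combining and solving for $\#C_4$ should yield exactly $\#C_4 = \tfrac18 \sum_i (\lambda_i^4 + \lambda_i^2) - \tfrac14 M(G)$.

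The main obstacle, and the only place care is genuinely needed, is getting the multiplicities in case (ii) exactly right: a $3$-vertex closed $4$-walk can be ``anchored'' at its central vertex in more than one cyclic position, so one must avoid double-counting while also correctly accounting for the ordered choice of the two leaves and for walks that momentarily look like they use fewer vertices. A clean way to organize this is to write $\operatorname{tr}(A^4) = \sum_{i,j} (A^2)_{ij}^2$ and note $(A^2)_{ij}$ is the number of common neighbors of $i$ and $j$ (or $d_i$ when $i=j$); then $\sum_i (A^2)_{ii}^2 = \sum_i d_i^2 = M(G)$ accounts for the diagonal, $\sum_{i \neq j, \, i \sim j}$-type terms give the paths, and the off-diagonal terms with $i \ne j$ ranging over pairs with a common neighbor eventually isolate $8\#C_4$ after subtracting the degenerate contributions. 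I would present whichever of the two bookkeeping schemes (explicit walk classification vs. the $\sum (A^2)_{ij}^2$ identity) produces the fewest error-prone steps, and then the final identity falls out by elementary algebra.
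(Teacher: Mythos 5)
Your approach is essentially the paper's own: it likewise computes $\mathrm{Tr}(A^4)$ as the number of closed $4$-walks, classifies them as $\mathrm{Tr}(A^4)=8\#C_4+2\#K_2+4\#K_{1,2}$, uses $\sum_i d_i^2=2\#K_2+2\#K_{1,2}$ and $\sum_i\lambda_i^2=2m$, and solves for $\#C_4$. One slip to fix in your case (i): each ordered edge $(a,b)$ determines exactly one walk $a\to b\to a\to b\to a$, so there are $2m$ such walks, not $4m$; with case (ii) contributing $4\#K_{1,2}=\sum_i 2d_i(d_i-1)=2M(G)-4m$ and case (iii) contributing $8\#C_4$, the bookkeeping closes and yields exactly the stated identity (your alternative $\mathrm{Tr}(A^4)=\sum_{i,j}\bigl((A^2)_{ij}\bigr)^2$ scheme works just as well).
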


Here we include a proof for the convenience of readers. 

\begin{proof}
Note that the matrix entry $(A^4)_{i,j}$ is equal to the number of walks of length $4$ from vertex $i$ 
to $j$. Thus, we obtain $\mathrm{Tr}(A^4)=8\# C_4 + 
2\# K_2 + 4 \#K_{1,2}$, where $\# H$ denotes the number of copies of $H$ in $G$. Moreover, we have 
$\sum_{i\in V} d_i^2 = 2 \# K_2 + 2 \#K_{1,2}$. 
Therefore, we have 
\[ \# C_4= \frac{1}{8} \left(  \mathrm{Tr}(A^4) +2m - 
2 \sum_{i\in V} d_i^2 \right) 
= \frac{1}{8} \sum_{i\in V} (\lambda_i^4 + \lambda_i^2 - 2 d_i^2). \] 
The desired result follows immediately. 
\end{proof}
However, if we employ Lemma \ref{lem:LL-counting} with the trivial estimates, we obtain
\begin{equation}\label{eq:trace-not-enough}
  \#C_4 > \frac{1}{8}\left(\lambda^4(G) + \lambda^2(G) \right)
       - \frac{1}{4} M(G) > \frac{1}{8}(m^2 + m)
       - \frac{1}{4} (m^2 + m),   
\end{equation}
which is not even positive. We shall use~\Cref{lem:Nosal-max-degree,lem:Nosal-structure} to improve this estimate.

We first translate Theorem \ref{lem:Nosal-max-degree} into an estimate on $M(G)$.

\begin{lemma}
\label{lem:MG}
Let $M(G) = \sum_{i \in V} d_i^2$. Then $ M(G) \leq \Delta(G) \cdot m + o(m^2).$
\end{lemma}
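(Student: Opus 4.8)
The plan is to bound $M(G) = \sum_{i \in V} d_i^2$ by $\sum_i d_i \cdot \max_i d_i = 2m \cdot \Delta(G)$ as a first crude attempt, but this is off by a factor of $2$, so we need to be more careful and exploit the structure of the degree sequence. The key observation is that in a Nosal graph the sum of degrees is $\sum_{i \in V} d_i = 2m$, and by Theorem~\ref{lem:Nosal-max-degree} every degree is at most $\frac{m}{2} + m^{0.99}$. Subject to these two constraints, the quantity $\sum_i d_i^2$ is a convex function of the degree sequence, so it is maximized at an extreme point: we should put as many coordinates as possible at the cap $\frac{m}{2} + m^{0.99}$. Since the degrees sum to $2m$, at most $4$ vertices (roughly $\frac{2m}{m/2} = 4$) can have degree close to $\frac{m}{2}$, and the rest contribute a lower-order amount.

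Here is the argument I would write out. Order the degrees $d_1 \ge d_2 \ge \cdots \ge d_n$. By Theorem~\ref{lem:Nosal-max-degree}, for large $m$ we have $d_i \le \Delta(G) \le \frac{m}{2} + m^{0.99}$ for all $i$. Then
\[
M(G) = \sum_{i} d_i^2 \le \sum_i d_i \cdot \Delta(G) = 2m \cdot \Delta(G) = m\,\Delta(G) + m\,\Delta(G).
\]
This gives $M(G) \le 2\Delta(G) m$, which is a factor $2$ too large; we improve it as follows. Write $D := \Delta(G) \le \frac{m}{2} + m^{0.99}$. We claim at most four indices $i$ can have $d_i > m^{0.99}$ — no wait, that is false in general (a graph can have many vertices of degree, say, $m^{0.5}$). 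The correct split is by value, not by count. Let $S = \{i : d_i \ge \frac{1}{4}m\}$. Since $\sum_i d_i = 2m$, we have $|S| \cdot \frac{1}{4} m \le 2m$, so $|S| \le 8$. For $i \in S$, $d_i^2 \le d_i \cdot D \le d_i \cdot \left(\frac{m}{2} + m^{0.99}\right)$, and $\sum_{i \in S} d_i \le 2m$ gives $\sum_{i \in S} d_i^2 \le 2m \left(\frac{m}{2} + m^{0.99}\right) = m^2 + 2m^{1.99}$. Hmm, this still overshoots $\Delta(G) m \approx \frac{m^2}{2}$.

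The issue is that the naive bound $\sum_{i \in S} d_i^2 \le (\max d_i)(\sum d_i)$ does not use that the degrees genuinely sum to $2m$ and each is at most $\frac m2 + o(m)$; the extremal configuration has \emph{four} vertices of degree $\frac m2$, giving $\sum d_i^2 = 4 \cdot \frac{m^2}{4} = m^2$, whereas $\Delta(G) \cdot m = \frac{m^2}{2}$. So the claimed bound $M(G) \le \Delta(G) m + o(m^2)$ is \emph{not} true with $\Delta(G) = \frac m2$ — unless one uses $\sum d_i^2 \le \frac{m}{2}\sum d_i / 2$... Let me reconsider: actually $\sum_{i\in S} d_i^2 \le D \sum_{i \in S} d_i$ and separately $\sum_{i \in S} d_i \le 2m$, but also $\sum_{i\in S} d_i \le$ (something better is not available). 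The real extremal example for $M(G)$ given $\lambda > \sqrt m$ is Example~\ref{exam-C4-1}, where one vertex has degree $\approx m/2 + \sqrt m$ and $\approx \sqrt m$ vertices have degree $\approx \sqrt m$, so $M(G) \approx m^2/4$. Meanwhile $\Delta(G) m \approx m^2/2$. So indeed $M(G) \le \Delta(G) m + o(m^2)$ holds there with room to spare. The subtle point: when a vertex $v$ has $d_v$ close to $\Delta(G) \approx m/2$, the \emph{other} degrees sum to $2m - d_v \approx 3m/2$, but they are constrained further because $G$ has only $m$ edges — in fact $\sum_{i \ne v} d_i^2 \le \Delta(G) \sum_{i \ne v} d_i$, and one needs $\sum_{i} d_i^2 = M(G)$. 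I would therefore argue: $M(G) = \sum_i d_i^2 \le \Delta(G)\sum_i d_i \cdot \frac{1}{?}$... The clean route is: $\sum_i d_i^2 \le \Delta(G) \cdot \sum_i d_i = 2\Delta(G) m$ is one bound; to get the factor $1$ instead of $2$, observe that at most one vertex has degree exceeding $\frac{m}{2} - m^{0.99}$ would be nice but is false.

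I now believe the intended argument is simply the following: $M(G) = \sum_i d_i^2 \le \left(\max_i d_i\right) \sum_i d_i$ is too weak; instead split as $M(G) = d_1^2 + \sum_{i \ge 2} d_i^2$. Hmm, but with four big vertices this fails. \textbf{Main obstacle.} The genuine difficulty, and what I would flag as the hard part, is precisely reconciling the factor: the bound $M(G) \le \Delta(G) m + o(m^2)$ must be using $\sum_i d_i^2 \le \Delta(G) \cdot \frac{\sum_i d_i}{2} \cdot (\text{something})$ — more plausibly it uses that a Nosal graph with $\Delta(G)$ close to $m/2$ must in fact have \emph{few} edges not incident to the max-degree vertex, via a more refined structural fact. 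Given the paper only asserts $\Delta(G) \le \frac m2 + m^{0.99}$, the honest plan is:
\begin{enumerate}
\item Let $v$ be a max-degree vertex, $d_v = \Delta(G) =: D$.
\item Bound $\sum_{i \ne v} d_i^2 \le D \sum_{i \ne v} d_i = D(2m - D) \le D \cdot 2m$.
\item Combine: $M(G) = D^2 + \sum_{i\ne v} d_i^2 \le D^2 + 2Dm$.
\end{enumerate}
But $D^2 + 2Dm$ with $D \approx m/2$ is $\approx \frac{m^2}{4} + m^2 = \frac{5m^2}{4}$, way more than $Dm \approx m^2/2$. So this plan does not deliver the lemma as stated; I must be missing a stronger input. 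I would instead look for a sharpened version of Theorem~\ref{lem:Nosal-max-degree} that controls $\sum_i d_i^2$ directly — e.g. showing that in a Nosal graph, $\sum_{i} d_i^2 \le \Delta(G) m + o(m^2)$ follows because the number of edges in $G - v$ is $o(m)$ whenever $d_v \ge (\frac12 - \eps)m$. Concretely: if $d_v = D$ and $G - v$ has $m - D + e'$ edges where $e' = e(G[N(v)])$ (edges inside the neighborhood), then a spectral argument (similar to the proof of Theorem~\ref{lem:Nosal-max-degree}) should force $e(G - v) = o(m)$ when $D$ is large, hence $\sum_{i \ne v} d_i^2 \le \sum_{i \ne v} d_i \cdot \Delta(G - v)$ with $\Delta(G-v)$ and $\sum_{i\ne v} d_i$ both controlled. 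The hard part is this auxiliary structural claim; modulo it, the lemma is a short computation. I would write the proof assuming such a refinement, citing the proof method of Theorem~\ref{lem:Nosal-max-degree}, and defer the detailed spectral estimate.
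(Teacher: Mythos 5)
Your proposal does not close the argument: it ends by deferring the key step to an ``auxiliary structural claim'' (that a Nosal graph with a vertex $v$ of degree at least $(\tfrac12-\varepsilon)m$ has $e(G-v)=o(m)$), which is both unproven and in fact false --- in Example \ref{exam-C4-2}, $K_2 \vee \frac{m-1}{2}K_1$ has a vertex $v$ of degree $\approx m/2$ whose removal leaves $\approx m/2$ edges. Moreover, the scenario you worried might contradict the lemma (four vertices of degree $\approx m/2$, giving $\sum_i d_i^2 \approx m^2$) is not realizable by any $m$-edge graph: for any vertex set $B$ one has $\sum_{i\in B} d_i = e(B,\overline{B}) + 2e(G[B]) \le m + e(G[B])$, so at most two vertices can have degree close to $m/2$. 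This double-counting identity is exactly the missing idea, and it is what kills the factor $2$ that blocked you.

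The paper's proof is a short, purely combinatorial computation that uses neither the Nosal property nor Theorem \ref{lem:Nosal-max-degree} (those only enter in Corollary \ref{cor:MG}). Let $B=\{i: d_i \ge m^{0.7}\}$. Since $\sum_i d_i = 2m$, we have $\abs{B}\le 2m^{0.3}$, hence $e(G[B]) \le \binom{\abs{B}}{2} \le 2m^{0.6}$ and therefore $\sum_{i\in B} d_i \le m + 2m^{0.6}$; as each $d_i \le \Delta(G) \le m$, this gives $\sum_{i\in B} d_i^2 \le \Delta(G)\cdot m + 2m^{1.6}$. For the remaining vertices, $\sum_{i\notin B} d_i^2 \le m^{0.7}\sum_{i\notin B} d_i \le 2m^{1.7}$. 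Adding the two bounds yields $M(G)\le \Delta(G)\cdot m + 4m^{1.7} = \Delta(G)\cdot m + o(m^2)$. In short: your instinct that the crude bound $\sum_i d_i^2 \le \Delta(G)\sum_i d_i = 2\Delta(G)m$ must be improved by a structural input was right, but the needed input is the elementary observation that the total degree of the few high-degree vertices is at most $m+o(m)$ rather than $2m$, not any spectral refinement.
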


\begin{proof}
    Let $B$ be the set of vertices in $G$ with degree at least $m^{0.7}$. Then $\abs{B} \leq 2m^{0.3}$ and 
    $$\sum_{i \in B} d_i \le e(G)+e(G[B])\le m + \binom{\abs{B}}{2} \leq m + 2m^{0.6}.$$
    Note that $\Delta (G)\le m$. Thus we have
    $$\sum_{i \in B} d_i^2 \leq \Delta(G) 
    \cdot (m + 2m^{0.6} ) \leq \Delta(G) \cdot m + 2m^{1.6}.$$
    On the other hand, by the definition of $B$, we have  
    $$\sum_{i \notin B} d_i^2 \leq m^{0.7} \sum_{i \notin B} d_i = 2m^{1.7}.$$
    Consequently, we obtain 
    $M(G)\le \Delta (G)\cdot m + 4m^{1.7}$. 
\end{proof}

Combining with Theorem \ref{lem:Nosal-max-degree}, we get the promised estimate on $M(G)$. 

\begin{cor}
    \label{cor:MG}
    If $G$ is Nosal with $m$ edges, then we have
    \[  M(G)= \sum_{i \in V} d_i^2 \leq \left( \frac{1}{2} + o(1) \right) m^2. \] 
\end{cor}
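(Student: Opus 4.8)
The plan is to derive Corollary \ref{cor:MG} directly by chaining Lemma \ref{lem:MG} with Theorem \ref{lem:Nosal-max-degree}. Lemma \ref{lem:MG} gives $M(G) \le \Delta(G) \cdot m + o(m^2)$, and Theorem \ref{lem:Nosal-max-degree} gives $\Delta(G) \le \tfrac{m}{2} + m^{0.99}$ for all sufficiently large $m$. Substituting the latter into the former yields
\[
M(G) \;\le\; \left(\tfrac{m}{2} + m^{0.99}\right) m + o(m^2) \;=\; \tfrac{1}{2}m^2 + m^{1.99} + o(m^2) \;=\; \left(\tfrac12 + o(1)\right)m^2,
\]
since both $m^{1.99}$ and the $o(m^2)$ error term are of order $o(m^2)$. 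That is essentially the entire argument.

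The only mild subtlety worth spelling out is that Theorem \ref{lem:Nosal-max-degree} is stated for ``sufficiently large $m$'', so the bound $\Delta(G) \le \tfrac m2 + m^{0.99}$ holds only eventually; but this is harmless since the conclusion $M(G) \le (\tfrac12 + o(1))m^2$ is itself an asymptotic statement, so altering finitely many small cases does not affect it. One should also note that the application of Lemma \ref{lem:MG} requires no hypothesis on $G$ beyond being an $m$-edge graph (its proof only used $\Delta(G) \le m$, which is automatic), so the Nosal hypothesis enters solely through Theorem \ref{lem:Nosal-max-degree}.

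There is no real obstacle here: the corollary is a bookkeeping consequence of two results already established (modulo the proof of Theorem \ref{lem:Nosal-max-degree}, which is deferred). If anything, the ``hard part'' is purely cosmetic — making sure the exponents line up, i.e. that $m^{1.99} = o(m^2)$ and that the $o(m^2)$ notation absorbs the $4m^{1.7}$ term from Lemma \ref{lem:MG}'s proof — so I would simply present the one-line substitution above and remark that it completes the proof.
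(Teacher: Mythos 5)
Your proposal is correct and matches the paper's argument exactly: the corollary is obtained by substituting the bound $\Delta(G) \leq \frac{m}{2} + m^{0.99}$ from Theorem \ref{lem:Nosal-max-degree} into Lemma \ref{lem:MG}, with the lower-order terms $m^{1.99}$ and $4m^{1.7}$ absorbed into $o(m^2)$. Your remarks on the ``sufficiently large $m$'' caveat and on where the Nosal hypothesis enters are accurate but not needed beyond what the paper does.
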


We are now ready to establish Theorem \ref{thm:lim}.

\begin{proof}[Proof of Theorem \ref{thm:lim}] 
For any $\varepsilon > 0$, let $G'$ be the subgraph of $G$ given by Theorem \ref{lem:Nosal-structure}, and let $\#C_4'$ denote the number of copies of $C_4$ in $G'$. 

If $G'$ is bipartite, then its eigenvalues 
are symmetric with respect to the origin. Therefore, the smallest eigenvalue of $G'$ is at most $-(1 - \varepsilon - o(1)) \sqrt{m}$. By Corollary \ref{cor:MG}, we get 
\[  M(G') \le M(G) \le \left(\frac{1}{2} + o(1) \right)m^2. \]
Applying Lemma \ref{lem:LL-counting} to $G'$, we conclude that
$$\#C_4' \geq \frac{1}{8}\cdot \left(2(1 - \varepsilon)^4 - o(1) \right) m^2 - \frac{1}{4} M(G') \geq \frac{1}{8}\cdot \big(2(1 - \varepsilon)^4 - 1 - o(1) \big) m^2 
= \left( \frac{1}{8} - O(\varepsilon) \right) m^2.$$

If $G'$ has maximum degree at most $\varepsilon m$, then by Lemma \ref{lem:MG}, we have
\[ M(G') \le \varepsilon m^2 + o(m^2). \]
So we conclude that
$$\#C_4' \geq \frac{1}{8}\cdot \left( (1 - \varepsilon)^4 - o(1) \right) m^2 - \frac{1}{4} M(G') = \left( \frac{1}{8} - O(\varepsilon) \right) m^2.$$
In either case, the number of $4$-cycles in $G$ is at least $(1 /8 - O(\varepsilon)) m^2$. As $\varepsilon > 0$ is arbitrary, we conclude that $\#C_4 \geq (1 /8 - o(1)) m^2$.
\end{proof}

%\textbf{Remark: }You can tighten the $m^{0.99}$ with more effort, but this is not relevant for our application. Also there is likely a better way to do this proof.

\subsection{The maximum degree of Nosal graphs}

\begin{proof}[Proof of~Theorem \ref{lem:Nosal-max-degree}]
Let $i_*$ be the vertex that maximizes $x_{i_*}$. We repeat the following operation: if there is an edge $jk$ such that $j$ is not adjacent to $i_*$, then remove this edge from $G$ and add the edge $i_* j$. Note that this operation does not decrease $\bm{x}^T A_G \bm{x}$, so it preserves the Nosal property. When we can no longer do this operation, remove all isolated vertices, and let $G_*$ be the resulting graph. Then $G_*$ is Nosal, and $i_*$ is universal in $G_*$, that is, $i_*$ is adjacent to all the other vertices. Observe also that $\Delta(G_*) \geq \Delta(G)$. 

Thus, we may assume that our Nosal graph $G$ has a universal vertex $v$. Set $k = d_v$. For the sake of contradiction, 
we assume that $k > m/2 + m^{0.99}$.

Since $\lambda(G) > \sqrt{m}$, we have
$$ 2\sum_{\{i,j\} \in E} x_i x_j > \sqrt{m} 
\sum_{i \in V} x_i^2.$$
Let $G' = (V', E')$ be the subgraph of $G$ induced by 
$V(G) \backslash \{v\}$. 
Then $|V'|=k > m/2 +m^{0.99}$ and 
$|E'|=m-k$. 
We write $d_i'$ and $N'(i)$ for the degree and the neighborhood of a vertex $i\in V'$, respectively. 
Isolating the terms with $i = v$ in the above inequality, we have
$$2 \sum_{\{i,j\} \in E'} x_i x_j + 
2 \sum_{i \in V'} x_i x_v
> \sqrt{m} \sum_{i \in V'} x_i^2 + \sqrt{m} x_v^2.$$
Using the AM-GM inequality, we have
$$\sqrt{m} x_v^2 + \frac{1}{\sqrt{m}} 
\left( \sum_{i\in V'} x_i \right)^2 
\ge 2\sum_{i\in V'} x_i x_v.$$
Summing these two inequalities, we get 
\begin{equation} \label{eq-ge}
 2\sum_{\{i,j\} \in E'} x_i x_j + 
\frac{1}{\sqrt{m}} \left(\sum_{i \in V'} x_i \right)^2 
> \sqrt{m} \sum_{i \in V'} x_i^2. 
\end{equation} 
We now employ a classical estimate on 
$\sum_{\{i,j\} \in E'} x_i x_j$. Note that
$$ 2 \sum_{\{i,j\} \in E'} x_i x_j 
\leq \sum_{\{i,j\} \in E'} 
\left( \frac{\sqrt{d_j' + 1}}{\sqrt{d_i' + 1}} x_i^2 + \frac{\sqrt{d_i' + 1}}{\sqrt{d_j' + 1}} x_j^2\right).$$
For every $i$, we set
$$q_i := \sum_{j\in N'(i)} \frac{\sqrt{d_j' + 1}}{\sqrt{d_i' + 1}}.$$  
Then (\ref{eq-ge}) gives 
\begin{equation} \label{eq-recall}
\frac{1}{\sqrt{m}} \left(\sum_{i \in V'} x_i\right)^2 > 
\sum_{i \in V'} (\sqrt{m} - q_i ) \cdot x_i^2.
\end{equation}
We can bound $q_i$ as follows.

\begin{claim}\label{cl:1}
$q_i < \min\left\{ 
\sqrt{2(m - k)}, \sqrt{ \frac{d_i'}{d_i' + 1} \left((m - k) + d_i'^2\right)} \, \right\}.$   
\end{claim}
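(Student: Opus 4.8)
\textbf{Proof proposal for Claim~\ref{cl:1}.}
The claim asserts two upper bounds on $q_i = \sum_{j \in N'(i)} \sqrt{d_j'+1}/\sqrt{d_i'+1}$, and I will establish them separately by applying the Cauchy--Schwarz inequality in two different ways, exploiting the fact that $G'$ has only $m-k$ edges.

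\emph{First bound.} I would pull out the factor $1/\sqrt{d_i'+1}$ and apply Cauchy--Schwarz to the sum $\sum_{j \in N'(i)} \sqrt{d_j'+1}$, which has $d_i'$ terms:
\[
q_i = \frac{1}{\sqrt{d_i'+1}} \sum_{j \in N'(i)} \sqrt{d_j'+1} \le \frac{1}{\sqrt{d_i'+1}} \sqrt{d_i' \sum_{j \in N'(i)} (d_j'+1)} = \sqrt{\frac{d_i'}{d_i'+1}} \sqrt{\sum_{j \in N'(i)} (d_j'+1)}.
\]
Now $\sum_{j \in N'(i)} d_j'$ counts, for each neighbour $j$ of $i$ in $G'$, the number of edges of $G'$ at $j$; since every edge of $G'$ is counted at most twice this is at most $2(m-k)$, and more crudely at most $(m-k) + d_i'$ if one separates the edge $ij$-type contributions — actually the cleaner route for the \emph{second} bound is to note $\sum_{j\in N'(i)}(d_j'+1) \le (m-k) + d_i'^2$, because the edges incident to $N'(i)$ either have both endpoints in $N'(i)$ (at most $d_i'^2$ such incidences, bounding the ``$+1$'' and internal edges together) or go outside, and all edges of $G'$ number $m-k$. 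This yields $q_i < \sqrt{\frac{d_i'}{d_i'+1}((m-k)+d_i'^2)}$. For the first, simpler bound I would instead use $\sum_{j \in N'(i)}(d_j'+1) \le 2(m-k)$ directly (each edge of $G'$ contributes to at most two of the $d_j'$, and the number of $+1$ terms is $d_i' \le m-k$ so $\sum d_j' + d_i' \le 2(m-k)$), together with $d_i'/(d_i'+1) < 1$, to get $q_i < \sqrt{2(m-k)}$.

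\emph{Assembling.} Combining the two estimates gives exactly $q_i < \min\{\sqrt{2(m-k)},\ \sqrt{\frac{d_i'}{d_i'+1}((m-k)+d_i'^2)}\}$, as claimed. The only slightly delicate point is the bookkeeping on $\sum_{j \in N'(i)}(d_j'+1)$: I need the correct count that separates edges internal to $N'(i)$ from those leaving it, so that the $d_i'^2$ term (rather than a larger quantity) appears in the second bound, while for the first bound the trivial $\le 2(m-k)$ suffices. I do not expect any real obstacle here — it is a routine double-counting argument — but care is needed to make sure both bounds come out with the stated constants rather than weaker ones, since these constants feed into the subsequent analysis of inequality~\eqref{eq-recall}.
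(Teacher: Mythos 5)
Your proposal follows essentially the same route as the paper: Cauchy--Schwarz in the form $q_i \le \sqrt{d_i'/(d_i'+1)}\,\sqrt{\sum_{j\in N'(i)}(d_j'+1)}$, followed by bounding $\sum_{j\in N'(i)}(d_j'+1)$ by a double count in two ways. Your accounting for the second bound (incidences from edges inside $N'(i)$ together with the $d_i'$ many $+1$'s are at most $2\binom{d_i'}{2}+d_i'\le d_i'^2$, and the remaining incidences are at most $e(G')=m-k$) is in substance the paper's argument, which takes $S=\{i\}\cup N'(i)$ and writes $\sum_{j\in S}d_j'$ as twice the number of edges inside $S$ plus the number of edges leaving $S$, hence at most $e(G')+\binom{|S|}{2}$. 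The one slip is in your justification of the first bound: from ``each edge of $G'$ contributes at most two to $\sum_{j\in N'(i)} d_j'$'' and ``$d_i'\le m-k$'' you would only conclude $\sum_{j\in N'(i)}(d_j'+1)\le 3(m-k)$, not $2(m-k)$. The inequality you want is nevertheless true, and the clean fix (which is how the paper argues) is to observe that $\sum_{j\in N'(i)}(d_j'+1)=\sum_{j\in S}d_j'$ with $S=\{i\}\cup N'(i)$, since $|N'(i)|=d_i'$, and this sum counts every edge of $G'$ at most twice; equivalently, the $d_i'$ edges at $i$ contribute only once each to $\sum_{j\in N'(i)}d_j'$, which exactly absorbs the $d_i'$ many $+1$'s. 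With that bookkeeping repaired, your argument is complete and matches the paper's proof.
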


\begin{poc}
Using the Cauchy--Schwarz inequality, we obtain 
$$q_i \leq \sqrt{\frac{d_i'}{d_i' + 1}} \cdot \sqrt{\sum_{j \in N'(i)} (d_j' + 1)}.$$
Let $S$ denote the vertex set consisting of $i$ and its neighbors in $V'$. 
Then $\sum_{j \in N'(i)} (d_j' + 1)=\sum_{j\in S}d_j'$ is equal to twice the number of edges in $G'[S]$ plus the number of edges in $G'$ with exactly one vertex in $S$. In particular, this implies
$$\sum_{j \in N'(i)} (d_j' + 1) \leq 
\min\left\{2e(G'), e(G') + 
\binom{\abs{S}}{2}\right\}.$$
The claim follows as $e(G') = m - k$ and $\abs{S} = d_i' + 1$.  
\end{poc}

Recall that we assume $k > m/2 + m^{0.99}$. 
By Claim \ref{cl:1}, for each $i \in V'$ we have
$$\sqrt{m} - q_i > 0. $$ 
Applying the Cauchy--Schwarz inequality on the right hand side of~\eqref{eq-recall} gives 
$$\frac{1}{\sqrt{m}} \left(\sum_{i \in V'} x_i\right)^2>\sum_{i \in V'} (\sqrt{m} - q_i) \cdot x_i^2 \geq \left(\sum_{i \in V'} x_i \right)^2 \cdot \frac{1}{\sum\limits_{i \in V'} \frac{1}{\sqrt{m} - q_i}}.$$
So we must have
\begin{equation} \label{eqge-sqm}
    \sum_{i \in V'} \frac{1}{\sqrt{m} - q_i} > \sqrt{m}.
\end{equation}
Let $S_1$ be the set of vertices in $V'$ with degree at least $m^{0.25}$, and let $S_2 = V' \backslash S_1$.  
For any vertex $i \in S_1$, Claim \ref{cl:1} implies that  
\begin{equation}  \label{eq-S1} 
\frac{1}{\sqrt{m} - q_i} < \frac{1}{\sqrt{m} - \sqrt{2(m - k)}} < \frac{1}{\sqrt{m} - \sqrt{m - 2m^{0.99}}} < m^{-0.49}. 
\end{equation} 
For each vertex $i \in S_2$, by definition 
we have $d_i' < m^{0.25}$. 
Next, we prove a more sophisticated estimate 
on $1/(\sqrt{m} - q_i)$ for every $i\in S_2$. Let $d'_{i,1}$ denote the number of edges between $i$ and vertices in $S_1$, and $d'_{i,2}$ denote the number of edges between $i$ and vertices in $S_2$. 

\begin{claim}\label{cl:2}
    For any vertex $i\in S_2$, we have 
\begin{equation} \label{eq-S2}
\frac{1}{\sqrt{m} - q_i} \leq m^{-0.5} + (m^{-0.5} - m^{-0.6}) d'_{i,1} + m^{-0.6} d'_{i,2}.
\end{equation} 
\end{claim}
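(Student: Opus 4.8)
\textbf{Proof proposal for Claim \ref{cl:2}.}

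The plan is to estimate $q_i$ from above for a vertex $i \in S_2$ by splitting its neighbourhood according to whether neighbours lie in $S_1$ or $S_2$, and then to Taylor-expand $1/(\sqrt m - q_i)$ around $\sqrt m$. Recall $q_i = \sum_{j \in N'(i)} \sqrt{(d_j'+1)/(d_i'+1)}$. For a neighbour $j \in S_1$ we have no good upper bound on $d_j'$ beyond $d_j' + 1 \le m$, so each such term contributes at most $\sqrt{m/(d_i'+1)} \le \sqrt m$ (using $d_i' \ge 0$); there are $d'_{i,1}$ such terms. For a neighbour $j \in S_2$ we have $d_j' + 1 \le m^{0.25}$, so each such term is at most $m^{0.125}$; there are $d'_{i,2}$ such terms, and $d'_{i,1}+d'_{i,2} = d_i' < m^{0.25}$. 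Thus $q_i \le \sqrt m\, d'_{i,1} + m^{0.125} d'_{i,2}$, but a cruder and cleaner bound suffices: since all neighbours of $i$ lie in $V'$ and $|S| = d_i'+1 \le m^{0.25}$, Claim \ref{cl:1} already gives $q_i < \sqrt{(m-k)+d_i'^2} \le \sqrt{m}$, and moreover $q_i \le \sqrt{d_i'} \cdot \sqrt{\max_j (d_j'+1)}$. The key point is to control the \emph{gap} $\sqrt m - q_i$ from below by something linear in $d'_{i,1}$ and $d'_{i,2}$.

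Concretely, I would argue $\sqrt m - q_i \ge \sqrt m - \sqrt m\, d'_{i,1} - m^{0.125}\, d'_{i,2}$ is too weak when $d'_{i,1} \ge 1$; instead one should observe that $q_i \le \sqrt{d_i'/(d_i'+1)}\cdot\sqrt{(m-k) + \binom{d_i'+1}{2}}$ and that for $i \in S_2$ the term $\binom{d_i'+1}{2} \le m^{0.5}$ is negligible, so $q_i \le \sqrt{m-k} + O(m^{0.25})$ roughly, giving $\sqrt m - q_i \ge (\sqrt m - \sqrt{m-k}) - O(m^{0.25})$. Using $k \ge m/2$ this lower bound is $\Omega(\sqrt m)$, so $1/(\sqrt m - q_i) = O(m^{-1/2})$, consistent with the claimed leading term $m^{-0.5}$. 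To get the refined coefficients of $d'_{i,1}$ and $d'_{i,2}$ in \eqref{eq-S2}, I would write $1/(\sqrt m - q_i) \le m^{-1/2}(1 + q_i/\sqrt m + O(q_i^2/m))$ and then bound $q_i$ by grouping: the $S_1$-neighbours contribute at most $\approx \sqrt m\, d'_{i,1}$ and the $S_2$-neighbours at most $\approx m^{0.125} d'_{i,2}$ to $q_i$, so $q_i/\sqrt m \lesssim d'_{i,1} + m^{-0.375} d'_{i,2}$. Matching this against the stated bound $m^{-0.5} + (m^{-0.5}-m^{-0.6})d'_{i,1} + m^{-0.6}d'_{i,2}$ shows that the $d'_{i,1}$-coefficient is essentially $m^{-0.5}$ shaved by a lower-order correction (coming from the second-order term in the expansion and the fact that $q_i$ from $S_1$-neighbours is slightly less than $\sqrt m$ each, since $d_i' \ge 1$ forces $\sqrt{1/(d_i'+1)} \le 1/\sqrt 2$), while the $d'_{i,2}$-coefficient $m^{-0.6}$ comfortably dominates $m^{-1/2}\cdot m^{0.125}/\sqrt m = m^{-0.875}$.

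The main obstacle I anticipate is bookkeeping the precise exponents so that the inequality \eqref{eq-S2} holds for \emph{all} $i \in S_2$ uniformly, including the edge case $d_i' = 0$ (where $q_i = 0$ and the bound reads $m^{-0.5} \ge m^{-0.5}$, fine) and the case where $i$ has one high-degree neighbour (so $d'_{i,1} = 1$): there one needs the coefficient of $d'_{i,1}$ to be genuinely less than $m^{-0.5}$, which forces the argument to exploit that a neighbour $j \in S_1$ of $i \in S_2$ satisfies $\sqrt{(d_j'+1)/(d_i'+1)} \le \sqrt{(d_j'+1)}$ with $d_j'+1 \le m$ but also that $q_i$ cannot be too close to $\sqrt m$ unless $d_i'$ is small, in which case $1/(\sqrt m - q_i)$ has a controlled Taylor expansion. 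I would handle this by first establishing a clean two-sided estimate $q_i \le m^{-0.5}\cdot(\text{something}) $—actually by bounding $q_i \le \sqrt{(m-k) + d_i'^2}$ from Claim \ref{cl:1} and separately $q_i \le \sqrt{m}\,d'_{i,1}/\sqrt{d_i'+1} + m^{1/8} d'_{i,2}$—and then choosing whichever is sharper in each regime before expanding the reciprocal. The remaining work is routine algebra with the exponents $0.25, 0.5, 0.6, 0.99$, which I would not grind through here.
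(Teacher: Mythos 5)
Your overall skeleton matches the paper's proof (split $N'(i)$ into $S_1$- and $S_2$-neighbours, exploit the margin $k>m/2+m^{0.99}$, treat small $d'_{i,1}$ with extra care), and you correctly recognise that the naive per-neighbour bound $q_i\le \sqrt m\,d'_{i,1}+m^{0.125}d'_{i,2}$ is useless once $d'_{i,1}\ge 1$. However, the concrete toolkit you commit to is not sufficient, and the failure sits exactly in the ``routine algebra'' you defer. Your two estimates are (I) the aggregate bound of Claim~\ref{cl:1}, $q_i\le\sqrt{\tfrac{d_i'}{d_i'+1}\bigl((m-k)+d_i'^2\bigr)}$, and (II) $q_i\le \sqrt m\,d'_{i,1}/\sqrt{d_i'+1}+m^{1/8}d'_{i,2}$. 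Test them at $d'_{i,1}=d'_{i,2}=1$ (so $d_i'=2$): the right-hand side of \eqref{eq-S2} is exactly $2m^{-0.5}$, so the claim forces $q_i\le \tfrac12\sqrt m$. But (I) gives only $q_i\le\sqrt{\tfrac23(\tfrac m2-m^{0.99}+4)}\approx 0.577\sqrt m$, and (II) gives $\tfrac{1}{\sqrt3}\sqrt m+m^{0.125}\approx 0.577\sqrt m$ as well; neither reaches $\tfrac12\sqrt m$, and no reciprocal expansion can repair an upper bound on $q_i$ that is itself too weak. More generally, regime-switching between (I) and (II) fails whenever $d'_{i,1}\in\{1,2\}$ and $d'_{i,2}$ is small but nonzero, because (I) carries the prefactor $\sqrt{d_i'/(d_i'+1)}$, which lets the low-degree $S_2$-neighbours inflate the main term.

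The missing idea, which is how the paper proves Claim~\ref{cl:2}, is a hybrid Cauchy--Schwarz: apply Cauchy--Schwarz \emph{only over the $S_1$-neighbours} (so the prefactor is $\sqrt{d'_{i,1}/(d_i'+1)}\le\sqrt{d'_{i,1}/(d'_{i,1}+1)}$) while still bounding $\sum_{j\in N'(i)}(d_j'+1)$ globally by $(m-k)+d_i'^2\le \tfrac m2-m^{0.98}$; the $S_2$-neighbours are handled separately by $\sum_{j\in S_2\cap N'(i)}\sqrt{(d_j'+1)/(d_i'+1)}\le m^{0.2}d'_{i,2}$, and this small shift of the denominator is absorbed as an additive $100\,m^{-0.8}d'_{i,2}\le m^{-0.6}d'_{i,2}$ because the remaining gap is at least $(1-1/\sqrt2)\sqrt m$. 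In the example above this gives the $S_1$-contribution $\le\sqrt{\tfrac13(\tfrac m2-m^{0.98})}\approx 0.408\sqrt m$, which is why the claim holds. Finally, your mechanism for extracting the coefficient $(m^{-0.5}-m^{-0.6})$ of $d'_{i,1}$ via a first-order expansion with ``$q_i/\sqrt m\lesssim d'_{i,1}$'' cannot be made rigorous (the expansion is not even convergent under that bound); the paper instead verifies
\[
\frac{1}{\sqrt m-\sqrt{\tfrac{d'_{i,1}}{d'_{i,1}+1}\bigl(\tfrac m2-m^{0.98}\bigr)}}\le m^{-0.5}+(m^{-0.5}-m^{-0.6})d'_{i,1}
\]
directly, by computation for $d'_{i,1}\in\{0,1\}$ (this is where the $m^{0.99}$ margin is genuinely needed) and via the elementary inequality $0.9\bigl(1-\sqrt{x/(2(x+1))}\bigr)(1+x)>1$ for $d'_{i,1}\ge 2$. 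So the proposal has a genuine gap: without the split Cauchy--Schwarz estimate, the stated inequality is not provable from the bounds you list.
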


\begin{poc}
Using Cauchy-Schwarz's inequality 
as in Claim \ref{cl:1}, we have
$$\sum_{j \in S_1 \cap N'(i)} \frac{\sqrt{d_j' + 1}}{\sqrt{d_i' + 1}} \leq \sqrt{ \frac{d'_{i,1}}{d_i' + 1} \left((m - k) + d_i'^2\right)} \leq 
\sqrt{ \frac{d'_{i,1}}{d'_{i,1} + 1} 
\left( \frac{m}{2} - m^{0.98} \right)},$$
where the last inequality follows from 
$k> m/2 + m^{0.99}$ and $d_i'^2 < m^{0.5}$. 
Furthermore, we have
$$\sum_{j \in S_2\cap N'(i)} \frac{\sqrt{d_j' + 1}}{\sqrt{d_i' + 1}} \leq \sqrt{m^{0.25} + 1} \cdot \frac{d_{i,2}'}{\sqrt{d_i' + 1}} \leq m^{0.2} d_{i,2}'.$$
So for every $i\in S_2$, it follows that  
\[ q_i \leq \sqrt{ \frac{d'_{i,1}}{d'_{i,1} + 1} 
\left( \frac{m}{2} - m^{0.98} \right)} + m^{0.2} d_{i,2}' . \]
Then we have
$$\frac{1}{\sqrt{m} - q_i} \leq \frac{1}{\sqrt{m} - \sqrt{\frac{d'_{i,1}}{d'_{i,1} + 1} 
\left( \frac{m}{2} - m^{0.98} \right)} - m^{0.2} d_{i,2}'}.$$ 
Note that the difference between the first two terms in the denominator above is greater than $(1 - \sqrt{1 / 2})\sqrt{m}$, 
which together with $d_{i,2}' < d_i' < m^{0.25}$ 
yields 
$$\frac{1}{\sqrt{m} - q_i} \leq 
\frac{1}{\sqrt{m} - \sqrt{\frac{d'_{i,1}}{d'_{i,1} + 1} 
\left( \frac{m}{2} - m^{0.98}\right)}} + 100m^{-0.8} d_{i,2}'.$$
One can now directly verify
$$\frac{1}{\sqrt{m} - \sqrt{\frac{d'_{i,1}}{d'_{i,1} + 1} 
\left( \frac{m}{2} - m^{0.98}\right)}} \leq 
m^{-0.5} + (m^{-0.5} - m^{-0.6}) d'_{i,1}.$$
Indeed, if $d'_{i,1} \in \{0, 1\}$, then this follows by direct computation. If $d'_{i,1} \geq 2$ we have
$$\frac{1}{\sqrt{m} - \sqrt{\frac{d'_{i,1}}{d'_{i,1} + 1} \left( \frac{m}{2} - m^{0.98} \right)}} 
<  \frac{1}{\sqrt{m}} \cdot \frac{1}{1 - \sqrt{\frac{d'_{i,1}}{2(d'_{i,1} + 1)}}} 
< 0.9 m^{-0.5} (1 + d'_{i,1}),$$
where the last inequality follows since for any real number $x> 1.43$, 
\[ 0.9\left( 1-\sqrt{\frac{x}{2(x+1)}}\, \right)(1+x) >1. \]  
So we have verified the inequality (\ref{eq-S2}).  
\end{poc}

Combining (\ref{eq-S1}) with (\ref{eq-S2}), we have
\begin{align*}
 \sum_{i \in V'} \frac{1}{\sqrt{m} - q_i} &\leq m^{-0.49} \abs{S_1} + \sum_{i \in S_2} \left( 
m^{-0.5} + (m^{-0.5} - m^{-0.6}) d'_{i,1} + m^{-0.6} d'_{i,2} \right) \\
&= m^{-0.5} \abs{S_2} + m^{-0.49} \abs{S_1} + \sum_{i \in S_2} \left((m^{-0.5} - m^{-0.6}) d'_{i,1} + m^{-0.6} d'_{i,2} \right)
\end{align*}
We now bound the right hand side using the discharging method. For each edge $e \in E'$ and endpoint $i \in e$, we define a weight $w_{e, i}$ that $e$ ``discharges" to $i$ as follows
\begin{enumerate}
    \item If $e$ lies in $G'[S_1]$ or $G'[S_2]$, we set $w_{e, i} = m^{-0.6}$.
    \item Otherwise, let the endpoints of $e$ be $i, j$ with $i \in S_1$ and $j \in S_2$. We set $w_{e, i} = m^{-0.7}$ and $w_{e, j} = m^{-0.5} - m^{-0.6}$.
\end{enumerate}
Recall that $S_1$ denotes the set of vertices with degree at least $m^{0.25}$.  On the one hand, each vertex $i$ in $S_1$ satisfies
$$\sum_{e: i \in e} w_{e, i} \geq m^{-0.7} \cdot  m^{0.25} > m^{-0.49}$$
while each vertex $i$ in $S_2$ satisfies
$$\sum_{e: i \in e} w_{e, i} \geq (m^{-0.5} - m^{-0.6}) d'_{i,1} + m^{-0.6} d'_{i,2}.$$ 
On the other hand, each edge $e$ in $E'$ satisfies
\[ \sum_{i: i \in e} w_{e, i} \leq \max\{2m^{-0.6}, m^{-0.7} + m^{-0.5} - m^{-0.6}\} < m^{-0.5} - m^{-0.7} .\] 
So we conclude that
$$m^{-0.49} \abs{S_1} + \sum_{i \in S_2} (m^{-0.5} - m^{-0.6}) d'_{i,1} + m^{-0.6} d'_{i,2} 
\leq \sum_{i \in V', e \in E': i \in e} w_{e, i} \leq (m^{-0.5} - m^{-0.7}) \abs{E'}.$$
Invoking $|E'|=m-k$ and $|S_2| 
\le |V'| = k$, 
 we have
$$\sum_{i \in V'} \frac{1}{\sqrt{m} - q_i} \leq m^{-0.5} k + (m^{-0.5} - m^{-0.7})(m - k) \leq \sqrt{m},$$
which contradicts with (\ref{eqge-sqm}). 
Therefore, we must have $k \le m/2 + m^{0.99}$. 
\end{proof}

In the above proof, under the assumption $k> m/2 + m^{0.99}$, 
we have shown that \eqref{eq-ge} does not hold. In other words, we have shown the following result, which may be of independent interest.

\begin{proposition} 
Let $m$ and $k$ be integers such that 
$k>m/2 + m^{0.99}$. 
If $G'=(V',E')$ is a graph with 
$|V'|=k$ and $|E'|=m - k$, then 
$$ 2\sum_{\{i,j\} \in E'} x_i x_j \leq \sqrt{m} \sum_{i \in V'} x_i^2 - \frac{1}{\sqrt{m}} 
\left(\sum_{i \in V'} x_i \right)^2.$$ 
\end{proposition}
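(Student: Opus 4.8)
The plan is to prove the proposition by contradiction, reducing it in one line to the computation already carried out in the proof of Theorem~\ref{lem:Nosal-max-degree}. Suppose the asserted inequality fails, i.e.
\[
2\sum_{\{i,j\}\in E'} x_i x_j \;>\; \sqrt m \sum_{i\in V'} x_i^2 \;-\; \frac{1}{\sqrt m}\Bigl(\sum_{i\in V'} x_i\Bigr)^2 .
\]
This is precisely inequality~\eqref{eq-ge}. The key observation is that \emph{everything} in the proof of Theorem~\ref{lem:Nosal-max-degree} from~\eqref{eq-ge} onwards uses only the combinatorial data $|V'| = k$, $|E'| = m-k$ and the hypothesis $k > m/2 + m^{0.99}$: it never uses that $\bm x$ is a Perron--Frobenius eigenvector (the only inequalities involving $\bm x$ are the AM--GM bound $2x_ix_j \le \frac{\sqrt{d_j'+1}}{\sqrt{d_i'+1}}x_i^2 + \frac{\sqrt{d_i'+1}}{\sqrt{d_j'+1}}x_j^2$ and one application of Cauchy--Schwarz, both valid for arbitrary reals), nor does it use that $G'$ arose by deleting a universal vertex of a Nosal graph. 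So the proposal is simply to transcribe that portion of the argument.

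In order, the steps are: (i) apply the classical quadratic-form estimate to $2\sum_{\{i,j\}\in E'}x_ix_j$ and set $q_i := \sum_{j\in N'(i)}\sqrt{(d_j'+1)/(d_i'+1)}$, which turns~\eqref{eq-ge} into~\eqref{eq-recall}; (ii) invoke Claim~\ref{cl:1}, which gives $q_i < \sqrt{2(m-k)} < \sqrt{m-2m^{0.99}} < \sqrt m$ for every $i\in V'$ --- this is the only place where the hypotheses $|E'| = m-k$ and $k > m/2 + m^{0.99}$ enter --- so that $\sqrt m - q_i > 0$; (iii) apply Cauchy--Schwarz to~\eqref{eq-recall} to deduce~\eqref{eqge-sqm}, namely $\sum_{i\in V'}(\sqrt m - q_i)^{-1} > \sqrt m$; (iv) split $V' = S_1 \cup S_2$ into the vertices of degree $\ge m^{0.25}$ and the rest, bound each summand via~\eqref{eq-S1} and~\eqref{eq-S2} (Claim~\ref{cl:2}), and run the discharging count of Theorem~\ref{lem:Nosal-max-degree} to obtain $\sum_{i\in V'}(\sqrt m - q_i)^{-1} \le m^{-0.5}k + (m^{-0.5}-m^{-0.7})(m-k) \le \sqrt m$, using $|V'| = k$, $|E'| = m-k$ once more. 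Steps (iii) and (iv) contradict each other, which proves the proposition.

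Since the entire computation is already laid out in Section~\ref{sec-4}, I do not expect a genuinely new obstacle; the only thing requiring care is the bookkeeping check described in the previous paragraph --- verifying line by line that none of the invoked ingredients (the quadratic-form estimate, Claims~\ref{cl:1} and~\ref{cl:2}, the Cauchy--Schwarz step, and the discharging bound) secretly relied on $\bm x$ being an eigenvector or on the special structure of $G'$. A quick pass through the proof of Theorem~\ref{lem:Nosal-max-degree} confirms it did not, so the proposition follows immediately, and in fact holds for any real vector $(x_i)_{i\in V'}$.
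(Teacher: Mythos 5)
Your proposal is correct and coincides with the paper's own justification: the paper introduces this proposition precisely as a restatement of the portion of the proof of Theorem \ref{lem:Nosal-max-degree} from \eqref{eq-ge} onwards, which (as you verify) uses only $|V'|=k$, $|E'|=m-k$ and $k>m/2+m^{0.99}$, never the eigenvector property of $\bm{x}$. Your bookkeeping check, including the observation that the statement holds for an arbitrary real vector, is exactly the intended argument.
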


\subsection{Structural dichotomy for Nosal graphs}

\begin{proof}[Proof of~Theorem \ref{lem:Nosal-structure}]
    Let $\lambda = \lambda(G)$. Without loss of generality, assume $\varepsilon < 0.1$. 
    
    The crucial idea is to study the following sequence of sets. 
    Recall that $\bm{x}$ is the Perron-Frobenius eigenvector of $G$, which satisfies 
    $\sum_{i\in V} x_i^2 =1$.  By assumption, we have
    $$\sum_{\{i,j\} \in E} 2x_i x_j = \lambda > \sqrt{m}.$$
    For each positive integer $t$, we define the following sets
    $$A_t := \{i\in V: x_i \geq \varepsilon^{2t}\},$$
    $$B_t := \{i\in V: x_i \leq \varepsilon^{-2t} m^{-1/2}\},$$
    $$C_t := \{i\in V: \varepsilon^{-2t} m^{-1/2} < x_i < \varepsilon^{2t}\}.$$
    Observe that $A_t \sqcup B_t \sqcup C_t = V$ and $C_t \subset C_{t-1}$. Note that the bipartite graphs $G[C_t, C_{t - 1} \backslash C_t]$ are edge disjoint. By the pigeonhole principal, there exists some $t < \varepsilon^{-2} + 2$ such that
    $$\sum_{\{i,j\} \in E[C_t, C_{t - 1} \backslash C_t]} 
    2 x_ix_j \leq \varepsilon^2 \lambda.$$
    Since $\sum_i x_i^2 =1$, we have $\abs{A_{t - 1}} \leq \varepsilon^{-4(t - 1)}$ and $|C_t| \le \varepsilon^{4t}m$. 
    Thus we have
    $$\sum_{\{i,j\} \in E[C_t, A_{t - 1}]} 2x_i x_j\le \lambda (G[C_t, A_{t-1}])\le \sqrt{|C_t|\cdot |A_{t-1}|} \le \varepsilon^{2}\sqrt{m} 
    \le \varepsilon^{2}\lambda.$$
    For each $\{i,j\} \in E[C_t, B_{t - 1}]$, we have
    $$x_i x_j \leq \varepsilon^{-2(t - 1)}m^{-1/2} \cdot \varepsilon^{2t} \leq \varepsilon^2 m^{-1/2}.$$
    So we get
    $$\sum_{\{i,j\} \in E[C_t, B_{t - 1}]} 2x_i x_j \leq  \varepsilon^2 m^{-1/2} \cdot 2m \leq 2\varepsilon^2 \lambda.$$ 
    Recall that $V=A_{t-1} \sqcup B_{t-1} \sqcup C_{t-1}$ and $C_t \subseteq C_{t-1}$. 
    Thus we have $\overline{C_t} = A_{t - 1} \cup B_{t - 1} \cup (C_{t - 1} \backslash C_t)$. Combining the inequalities above, we conclude that 
    $$\sum_{\{i,j\} \in E[C_t, \overline{C_t}]} 2x_i x_j\leq 4 \varepsilon^2 \lambda \leq \varepsilon \lambda.$$
    Thus we have
    \begin{equation}
    \label{eq:Nosal-structure-intermediate}
    \sum_{\{i,j\} \in E[C_t]} 2x_i x_j 
    + \sum_{\{i,j\} \in E[\overline{C_t}]}2 x_i x_j \geq (1 - \varepsilon) \lambda.
    \end{equation}
    Note that $\overline{C_t} = A_t \cup B_t$. Since $\abs{A_{t}} \leq \varepsilon^{-4t}$, we obtain 
    $$\sum_{\{i, j\} \in E[A_t]} 2x_i x_j \leq \abs{A_t}^2 \leq \varepsilon^{-8t}.$$
    By the definition of $B_t$, we have
    $$\sum_{\{i,j\} \in E[B_t]} 2x_i x_j \leq 2m \cdot \left(\varepsilon^{-2t} m^{-1/2}\right)^2 \leq 2 \varepsilon^{-4t}.$$
    In conclusion, subtracting from \eqref{eq:Nosal-structure-intermediate} the contribution of $E[A_t]$ and $E[B_t]$, we have
    $$\sum_{\{i,j\} \in E[C_t]} 2x_i x_j 
    + \sum_{\{i,j\} \in E[A_t, B_t]} 2x_i x_j \geq (1 - \varepsilon) \lambda - 3 \varepsilon^{-8t}.$$
    In particular, setting $N(\varepsilon) = 3 \cdot \varepsilon^{-8(\varepsilon^{-2} + 2)}$, the disjoint union of $G[C_t]$ and $G[A_t, B_t]$ has spectral radius at least $(1 - \varepsilon) \lambda - N(\varepsilon)$. So one of the two subgraphs has spectral radius at least $(1 - \varepsilon) \lambda - N(\varepsilon)$.

    If $G[A_t, B_t]$ has spectral radius at least $(1 - \varepsilon) \lambda - N(\varepsilon)$, then it satisfies (a), as desired. Otherwise, $G[C_t]$ has spectral radius at least $(1 - \varepsilon) \lambda - N(\varepsilon)$, note that for any $i \in C_t$ with maximum degree $\Delta$ in $G[C_t]$, we have
    $$\varepsilon^{-2t} m^{-1/2} \Delta \leq \sum_{j \in N(i)} x_j = \lambda x_i \leq \lambda \cdot \varepsilon^{2t}.$$
    Recalling the standard estimate $\lambda\le \sqrt{2m}$, we see that $G[C_t]$ has max degree at most $\Delta\le \varepsilon^{4t} m^{1/2} \lambda \leq \varepsilon m$, thus it satisfies (b), as desired. 
\end{proof}

\section{Applications}\label{sec-5}

In this section, we present some corollaries of our main results. We focus on spectral analogs of phase transitions of several graph parameters for Nosal graphs. Some of such results were known prior to our work. For example, recent results of Ning and Zhai \cite{NZ2021, NZ2021b} show that every Nosal graph contains $\Omega (\sqrt{m})$ triangles and $\Omega(m^2)$ copies of $C_4$. In 2021, Zhai, Lin and Shu \cite{ZLS2021} 
proved that every Nosal graph contains a large complete bipartite subgraph $K_{2, t}$ with $t = \Omega(\sqrt{m})$. Theorem \ref{thm-booksize} recovers these results up to a constant. 

\subsection{Triangular edges}
A \emph{triangular edge} is an edge contained in a triangle. Extending Mantel's theorem, 
Erd\H{o}s, Faudree and Rousseau \cite{EFR1992} proved that 
every $n$-vertex graph with more than 
$\lfloor n^2/ 4\rfloor$ edges contains at least 
$2\lfloor n/2\rfloor +1$ triangular edges. 
Recently, Li, Feng and Peng \cite{LFP2024-triangular} investigated the spectral problems on triangular edges and showed a spectral version of 
Erd\H{o}s--Faudree--Rousseau's theorem. 
Moreover, they proposed the following conjecture.

\begin{conj}[See \cite{LFP2024-triangular}] 
\label{thm-main2} 
Every $m$-edge Nosal graph has at least $\sqrt{m}$ triangular edges. 
\end{conj}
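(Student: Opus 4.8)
The $\Omega(\sqrt m)$ version is immediate from Theorem~\ref{thm-booksize}: a book of size $k$ contains $2k+1$ triangular edges (the spine together with both sides of each page), so $bk(G)>\tfrac1{144}\sqrt m$ already yields $\Omega(\sqrt m)$ triangular edges. The whole difficulty is the sharp leading constant, and there the book bound is structurally too weak: the prism blow-up of Example~\ref{exam-prism} is Nosal with booksize only $(\tfrac13+o(1))\sqrt m$, so the inequality ``triangular edges $\ge 2\,bk(G)+1$'' cannot by itself reach $\sqrt m$. My plan is to adapt the proof of Theorem~\ref{thm-booksize}, replacing the Edwards--Khad\v{z}iivanov--Nikiforov book bound by Erd\H{o}s--Faudree--Rousseau's theorem (an $n$-vertex graph above the Mantel threshold has at least $2\lfloor n/2\rfloor+1$ triangular edges), whose extremal graphs are exactly ``a balanced complete bipartite graph plus one edge'' --- the same shape as the tight Example~\ref{exam-KST}.

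Following the scheme of Lemma~\ref{lem:Edwards-Blowup}, I would pass to the random blow-up $\tilde G$ (vertex $i$ thickened to an independent set $V_i$ of size $\lfloor w_iN\rfloor$, edges of $G$ thickened with density $p_{ij}$); the hypothesis $\sum_{\{i,j\}\in E}p_{ij}w_iw_j>\tfrac14$ forces $\tilde G$ to have $>N^2/4$ edges whp, hence $\ge(1-o(1))N$ triangular edges. One then takes $w_i=x_i^2$ for the Perron vector $\bm{x}$ and an edge weighting $p_{ij}=\max\{c\,x_ix_j-c',0\}/(x_i^2x_j^2)$, tuned (as in Theorem~\ref{thm-booksize}, but more delicately) so that $\sum p_{ij}w_iw_j=(1-o(1))\tfrac{\lambda}{2\sqrt m}>\tfrac14$ wastes essentially nothing when $\lambda$ is close to $\sqrt m$, and pulls the conclusion back --- via the Hoeffding/Kim--Vu concentration already developed --- to an edge $\{i,j\}$ of $G$ with $p_{ij}>0$, a common-neighbour set $B'(i,j)$ with $\sum_{k\in B'(i,j)}p_{ik}p_{jk}w_k$ bounded below, and $x_ix_j=(1-o(1))m^{-1/2}$; a Cauchy--Schwarz against $\sum_k x_k^2\le 1$ then lower-bounds $|B'(i,j)|$, and the book on $\{i,j\}\cup B'(i,j)$ supplies $2|B'(i,j)|+1$ triangular edges.

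I see two genuine obstacles. First, unlike the book bound, Erd\H{o}s--Faudree--Rousseau does not \emph{localise}: the triangular-edge count of a dense graph scales like $N^2$, so ``$\ge N$ triangular edges'' cannot be converted into a single edge of codegree $\approx N/2$ (witness $K_{N/3,N/3,N/3}$, far above the Mantel threshold yet with every edge of codegree $N/3$). One must instead use a stability form of the theorem --- a graph barely above Mantel with only $\approx N$ triangular edges is close to ``complete bipartite plus an edge'' --- and engineer the blow-up so that the relevant quantity really is that small, which is where the Perron-tuned choice of $p_{ij}$ must do its work. Second, the constant bookkeeping in Theorem~\ref{thm-booksize} is lavish (a factor lost in the weighting step, another squared in the final Cauchy--Schwarz, whence the $\tfrac1{144}$), and every step must be made asymptotically lossless; here I would split on $bk(G)$: large booksize finishes at once from a single book, and for small booksize the structural dichotomy (Theorem~\ref{lem:Nosal-structure}) yields a large subgraph that is bipartite or has maximum degree $o(m)$, the latter forcing $\|\bm{x}\|_\infty=O(\sqrt\varepsilon)$, which is precisely what makes the codegree estimates tight, while the pathological near-star regime $\lambda^2=m+o(m)$ is excluded outright by the maximum-degree bound of Theorem~\ref{lem:Nosal-max-degree}. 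Finally, reaching the exact threshold $\sqrt m$ (rather than $(1-o(1))\sqrt m$) requires upgrading the $o(1)$'s to genuine inequalities, a further layer of care that the tightness of Example~\ref{exam-KST} shows is unavoidable; I regard the non-localisation of Erd\H{o}s--Faudree--Rousseau as the crux.
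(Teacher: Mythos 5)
You were asked to prove Conjecture~\ref{thm-main2}, but this statement is left open in the paper: the authors do not prove it. What the paper actually establishes is only the weak version recorded in the unlabelled corollary of Section~\ref{sec-5}, that every $m$-edge Nosal graph has more than $\frac{1}{72}\sqrt{m}$ triangular edges, and it does so by precisely the observation in your first paragraph --- a book of size $k$ supplies $2k+1$ triangular edges, and Theorem~\ref{thm-booksize} gives $bk(G)>\frac{1}{144}\sqrt{m}$. So the portion of your proposal that is an actual proof coincides with the paper's own derivation of the weak bound and is correct; your remark that this route cannot reach the constant $1$ (because of the prism blow-up of Example~\ref{exam-prism}, whose booksize is only $(\frac13+o(1))\sqrt m$) also matches the paper's implicit reason for stating the sharp bound only as a conjecture, with Example~\ref{exam-KST}-type graphs ($2t+1=\sqrt m$ triangular edges) showing the constant $1$ would be optimal.

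The remainder of your proposal is a research plan, not a proof, and the obstacles you flag are genuine and unresolved, so the statement itself is not established by your write-up (nor by the paper). Concretely: Erd\H{o}s--Faudree--Rousseau does not localise, so in the random blow-up $\tilde{G}$ the guaranteed $\ge 2\lfloor N/2\rfloor+1$ triangular edges are spread over the whole graph; unlike Lemma~\ref{lem:Edwards-Blowup}, there is no single pair $\{i,j\}$ carrying a weighted codegree (or weighted triangular-edge) statement that can be pulled back to $G$ by the Hoeffding/Kim--Vu machinery. The proposed repair via a stability form of the theorem is not carried out, and it is unclear that the Perron-tuned weights force the auxiliary weighted graph into the ``complete bipartite plus an edge'' stability regime --- for the prism blow-up the natural weighted graph is tripartite-like rather than nearly bipartite. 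Similarly, the claim that the weighting and Cauchy--Schwarz steps can be made ``asymptotically lossless'' is asserted rather than proved: the paper's weighting only certifies $\sum_{\{i,j\}\in E}p_{ij}w_iw_j>\frac14$ with no matching upper control, and the final Cauchy--Schwarz squares the loss (this is exactly where the $\frac{1}{144}$ comes from). In short, your attempt proves what the paper proves ($\Omega(\sqrt m)$ triangular edges, with constant $\frac{1}{72}$), while Conjecture~\ref{thm-main2} with the constant $1$ remains open.
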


We remark that if Conjecture \ref{thm-main2} is true, 
then the bound $\sqrt{m}$ is the best possible. 
Similar with the graphs in Example \ref{exam-KST}, 
we consider $G=K_{s,t}^+$, where we take $s=2(\sqrt{m} +1)$ and $t=\frac{1}{2}(\sqrt{m} -1)$. 
We can easily see that $G$ has exactly $2t+1=\sqrt{m}$ triangular edges. 

Theorem \ref{thm-booksize} immediately implies a weak version of Conjecture \ref{thm-main2}.

\begin{cor}
Every $m$-edge Nosal graph contains more than $\frac{1}{72}\sqrt{m}$ 
triangular edges. 
\end{cor}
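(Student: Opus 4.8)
The goal is to deduce, from Theorem~\ref{thm-booksize}, that every $m$-edge Nosal graph has more than $\tfrac{1}{72}\sqrt{m}$ triangular edges. The plan is to observe that a large book directly produces many triangular edges. Suppose $G$ is an $m$-edge Nosal graph. By Theorem~\ref{thm-booksize}, $G$ contains a book $B_k$ with $k > \tfrac{1}{144}\sqrt{m}$, that is, there is an edge $\{u,v\}$ together with a set $W$ of $k$ common neighbors of $u$ and $v$, each forming a triangle on $\{u,v\}$.

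\medskip
\noindent First I would list the triangular edges supplied by this book. The spine edge $\{u,v\}$ is triangular, and for every $w \in W$ the two edges $\{u,w\}$ and $\{v,w\}$ are triangular (each lies in the triangle $uvw$). These $2k$ edges $\{u,w\},\{v,w\}$ over $w\in W$ are pairwise distinct, since distinct $w$'s give edges with distinct non-$\{u,v\}$ endpoints, and $\{u,w\}\neq\{v,w'\}$ as $u\neq v$ and neither $u$ nor $v$ lies in $W$ (they are already the spine). Hence $G$ has at least $2k+1 > \tfrac{2}{144}\sqrt{m} = \tfrac{1}{72}\sqrt{m}$ triangular edges, as claimed. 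One small point to address cleanly: the book guaranteed by Theorem~\ref{thm-booksize} has its spine as a genuine edge of $G$ and its pages as genuine common neighbors, so all listed edges are indeed edges of $G$; this is immediate from the definition of a book.

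\medskip
\noindent The only conceivable subtlety — and the step I'd expect to state most carefully rather than the "hard part," since there is no real obstacle here — is the disjointness bookkeeping: one must make sure not to double-count $\{u,w\}$ against $\{v,w\}$ or against the spine. Writing $t(G)_{\mathrm{edge}}$ for the number of triangular edges, the counting above gives $t(G)_{\mathrm{edge}} \ge 2\,bk(G) + 1$, and plugging in the bound $bk(G) > \tfrac{1}{144}\sqrt{m}$ from Theorem~\ref{thm-booksize} finishes the proof. (In fact one does not even need the "$+1$" from the spine.) This is exactly the kind of short derivation where the entire content is the inequality $bk(G) > \tfrac{1}{144}\sqrt m$ already established, so the corollary requires no further machinery.
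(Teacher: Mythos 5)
Your proof is correct and is exactly the argument the paper intends: the book $B_k$ with $k>\tfrac{1}{144}\sqrt{m}$ from Theorem~\ref{thm-booksize} immediately supplies the $2k$ distinct page edges (plus the spine), all triangular, giving more than $\tfrac{1}{72}\sqrt{m}$ triangular edges. No difference from the paper's (implicit) derivation.
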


\subsection{Maximum size of chordal subgraphs}

A \emph{chordal graph} is a graph with no induced cycles of length at least $4$. It is well-known that a graph is chordal if and only if 
it can be constructed from a single-vertex graph by 
iteratively adding a vertex and connecting it to a clique of 
the current graph. This is called a perfect elimination ordering. 
In 1989, 
Erd\H{o}s, Gy\'{a}rf\'{a}s, Ordman and Zalcstein \cite{EGOZ1989} proved that 
if $n$ is even and $G$ is a graph on $n$ vertices with 
$n^2/4 +1$ edges, then $G$ contains a chordal subgraph with at least  $3n/2 -1$ edges. 
This bound is achieved by the graph $K_{n/2,n/2}^+$. 
We refer to \cite{GS2023} for recent progress. 
Observe that a book in a graph is a chordal subgraph.  
From Theorem \ref{thm-booksize}, we can obtain the following result.

\begin{cor}
Every $m$-edge Nosal graph has a chordal subgraph with at least $\frac{1}{72} \sqrt{m}$ edges. 
\end{cor}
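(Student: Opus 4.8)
The plan is essentially a one-line reduction to Theorem~\ref{thm-booksize}. The key observation is that a book $B_k$ in a graph $G$ is itself a chordal subgraph: if $B_k$ consists of $k$ triangles on the common edge $\{u,v\}$ together with pendant vertices $w_1,\dots,w_k$, then the ordering $u, v, w_1,\dots,w_k$ is a perfect elimination ordering (each $w_i$ is attached to the clique $\{u,v\}$ of the already-built graph), so $B_k$ is chordal. Hence the maximum number of edges in a chordal subgraph of $G$ is at least the number of edges in the largest book of $G$.

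First I would count the edges of a book of size $k$: it has $k+2$ vertices ($u,v$ and the $k$ pages) and $2k+1$ edges (the spine $\{u,v\}$, plus two edges per page). Next I would invoke Theorem~\ref{thm-booksize}, which guarantees $bk(G) > \frac{1}{144}\sqrt{m}$ for every $m$-edge Nosal graph $G$. Therefore $G$ contains a book with at least $2 \cdot \frac{1}{144}\sqrt{m} + 1 > \frac{1}{72}\sqrt{m}$ edges, and this book is a chordal subgraph of $G$, which yields the claimed bound.

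There is really no obstacle here: the only thing to be careful about is to make sure the book genuinely qualifies as a chordal subgraph under the definition given (no induced cycle of length $\geq 4$), which is immediate from the perfect elimination ordering above, and to track the edge count correctly so that the factor $\frac{1}{2}$ loss in Theorem~\ref{thm-booksize} (books have roughly twice as many edges as pages) gives exactly the stated constant $\frac{1}{72}$. I would present this as a two-sentence corollary proof: state that any book is chordal, then apply Theorem~\ref{thm-booksize} with the edge count $2\,bk(G)+1$.
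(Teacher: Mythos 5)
Your proof is correct and is essentially the paper's own argument: the paper likewise observes that a book is a chordal subgraph and derives the corollary directly from Theorem~\ref{thm-booksize}, with the edge count $2\,bk(G)+1 > \frac{1}{72}\sqrt{m}$ giving the stated constant. Your explicit perfect elimination ordering and edge count just spell out what the paper leaves implicit.
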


The bound is tight up to a constant factor by the graphs in Examples \ref{exam-KST} and \ref{exam-prism}.

\subsection{$K_4$-saturating edges}
An edge in the complement $\overline{G}$ is called 
a \emph{$K_4$-saturating edge} if the addition of this edge to $G$ creates a copy of $K_4$. 
Disproving a conjecture of Erd\H{o}s and Tuza, 
 Balogh and Liu \cite{BL2014} proved that every $n$-vertex $K_4$-free graph 
 with $\lfloor n^2/4\rfloor +1$ edges must have at least 
 $\frac{2}{33}n^2 - \frac{3}{11}n$ $K_4$-saturating edges. 
 Moreover, they constructed a graph with at most 
 $\frac{2}{33}n^2 - \frac{7}{33}n$ $K_4$-saturating edges. 

Now consider an $m$-edge $K_4$-free Nosal graph $G$. Note that any pair of nonadjacent vertices in the large book guaranteed by Theorem \ref{thm-booksize} form a $K_4$-saturating edge. Thus, the same phase transition on the number of $K_4$-saturating edges occurs (jumping from 0 to $\Omega(m)$) for $K_4$-free graphs when their spectral radii go beyond $\sqrt{m}$.

\begin{cor} \label{coro-K4-sat} 
Every $m$-edge $K_4$-free Nosal graph has $\Omega(m)$ $K_4$-saturating edges.
\end{cor}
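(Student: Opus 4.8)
\textbf{Proof proposal for Corollary \ref{coro-K4-sat}.} The plan is to reduce the statement directly to Theorem \ref{thm-booksize} by observing that a large book in a $K_4$-free graph is a rich source of $K_4$-saturating edges. First I would apply Theorem \ref{thm-booksize} to the $m$-edge Nosal graph $G$ to obtain a book $B_k$ with $k > \frac{1}{144}\sqrt{m}$; that is, vertices $u, v$ with $\{u,v\} \in E(G)$ and a set $W \subseteq N(u) \cap N(v)$ with $|W| = k$. Since $G$ is $K_4$-free, no two vertices of $W$ can be adjacent (else those two together with $u, v$ would span a $K_4$), so $W$ is an independent set in $G$, and hence $\binom{k}{2}$ nonedges of $G$ have both endpoints in $W$.

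The key observation is that for any two distinct $w_1, w_2 \in W$, the pair $\{w_1, w_2\}$ is a $K_4$-saturating edge: adding $\{w_1, w_2\}$ to $G$ creates the copy of $K_4$ on $\{u, v, w_1, w_2\}$, since $u$ and $v$ are already adjacent to each other and to both $w_1$ and $w_2$. Therefore $\overline{G}$ contains at least $\binom{k}{2} > \binom{\lfloor \frac{1}{144}\sqrt{m}\rfloor}{2} = \Omega(m)$ $K_4$-saturating edges, which is exactly the claimed bound.

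There is essentially no obstacle here beyond bookkeeping: the only point requiring a word of care is that the $\binom{k}{2}$ pairs we produce are genuinely nonedges of $G$, which is precisely where $K_4$-freeness of $G$ is used (it forces $W$ to be independent), and that the created $K_4$ uses only the single added edge. Both are immediate. One could optionally remark on tightness — the graphs of Example \ref{exam-KST} (take $K_{s,t}^+$ with appropriate $s,t$) are $K_4$-free Nosal graphs with only $\Theta(m)$ $K_4$-saturating edges — but this is not needed for the stated corollary, whose content is just the matching lower-bound order of growth.
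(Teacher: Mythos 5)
Your proposal is correct and matches the paper's own argument: the paper likewise takes the book of size $\Omega(\sqrt{m})$ from Theorem \ref{thm-booksize} and observes that, by $K_4$-freeness, any pair of (necessarily nonadjacent) pages forms a $K_4$-saturating edge, giving $\binom{\Omega(\sqrt{m})}{2}=\Omega(m)$ such edges. Your write-up just spells out the independence of the page set and the tightness remark a bit more explicitly than the paper does.
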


Note that the linear-in-$m$ bound above is optimal: the $K_4$-free Nosal graphs in Examples \ref{exam-KST} and \ref{exam-prism} contain $O(m)$ $K_4$-saturating edges.

\iffalse
We remark that for a graph $G$ with $e(G)> \lfloor n^2/4\rfloor$, the number of copies of $C_4$ is $\Omega(n^4)$, and the maximum $t$ such that $K_{2,t}$ is a subgraph of $G$ is  $\Omega(n)$. 
These results can be easily proved by using a standard double counting method. 
Moreover, we can see from this table that there exists a correspondence between $n$  and $\sqrt{m}$. 
In other words, the spectral extremal graphs in the above problems are always dense graphs in some sense. 
Last but not least, we mention that 
the above extremal graph problems were also 
studied under the (weaker) condition $\lambda (G) > \lambda (T_{n,2})$ for a graph $G$ with given order $n$; 
see, e.g., \cite{NZ2021} for triangles, 
\cite{ZL2022jgt} for booksize, \cite{LFP2024-triangular} 
for triangular edges. 
\fi

\subsection{Degree power in a graph}

As the last application, we give a bound on degree powers in graphs with forbidden generalized books. Nikiforov and Rousseau \cite[eq. (4)]{NR2004} proved that if $G$ is a $K_{r+1}$-free graph on $n$ vertices with $m$ edges, then 
\begin{equation}   \label{eq-NR}
\sum_{i=1}^n d_i^2 \le 2\left( 1-\frac{1}{r}\right)mn. 
\end{equation}
Moreover, the equality holds if and only if $G$ is a complete bipartite graph for $r=2$, or  $G$ is a regular complete  $r$-partite graph for $r\ge 3$, that is, $r$ divides $n$ and $G=T_{n,r}$. By Cauchy--Schwarz's inequality, we see that (\ref{eq-NR}) implies Tur\'{a}n theorem. See also \cite{BK2012} for work on degree powers.

The original proof of (\ref{eq-NR}) is combinatorial. 
Now we provide two algebraic proofs. 

\begin{proof}[Algebraic proof of (\ref{eq-NR})]
The Hofmeistar inequality (or Rayleigh's formula) gives that $\lambda^2 (G) \ge \frac{1}{n} \sum_{i=1}^n d_i^2,$
with equality if and only if $G$ is either regular or bipartite semi-regular. Recall that if 
$G$ is a $K_{r+1}$-free graph, then $\lambda^2(G) \le 
\left( 1-\frac{1}{r} \right)2m,$
and the equality holds if and only if 
$G$ is a complete bipartite graph for $r=2$, 
or a regular complete  $r$-partite graph for $r\ge 3$.  
Combining these two inequalities, 
we can obtain (\ref{eq-NR}) immediately. 
\end{proof}

We give another algebraic proof of (\ref{eq-NR}). 
We use $q (G)$ to denote the spectral radius of 
the signless Laplacian matrix of $G$. 
  It was proved in  \cite{HJZ2013} that if 
$G$ is a $K_{r+1}$-free graph, then 
$q(G)\le q(T_{n,r})$, with equality if and only if $G=T_{n,r}$. Since $q(T_{n,r})\le (1- \frac{1}{r})2n$, it follows that   
\[   \frac{1}{m} \sum_{i=1}^n d_i^2 \le q(G)  \le \left( 1-\frac{1}{r} \right)2n.\]  
This completes the second algebraic proof. 
Using a similar argument as the first algebraic proof, we can extend 
(\ref{eq-NR}) to $B_{r,k}$-free graphs using Theorem \ref{cor:joints}.

\begin{cor}  \label{coro-deg-square}
For every fixed $r\ge 2$ and $k\ge 1$, 
there exists an integer $m_0$ such that 
if $m\ge m_0$ and $G$ is a $B_{r,k}$-free graph on 
$n$ vertices with $m$ edges, then 
\begin{equation*} 
\sum_{i=1}^n d_i^2 \le 2\left( 1-\frac{1}{r}\right)mn.
\end{equation*} 
\end{cor}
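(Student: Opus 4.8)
The plan is to follow verbatim the first algebraic proof of \eqref{eq-NR} given above, with the spectral Tur\'an bound for $K_{r+1}$-free graphs replaced by its $B_{r,k}$-free counterpart, which is exactly what Theorem \ref{cor:joints} provides. Concretely, the argument splits into two ingredients: (i) the Hofmeister (Rayleigh) inequality $\lambda^2(G) \ge \frac1n \sum_{i=1}^n d_i^2$, valid for every graph $G$ on $n$ vertices and immediate from testing the Rayleigh quotient of $A(G)^2$ against the all-ones vector (indeed $\sum_i d_i^2$ equals the sum of all entries of $A(G)^2$, which is at most $\lambda^2(G)\cdot n$); and (ii) the bound $\lambda^2(G) \le 2\bigl(1-\tfrac1r\bigr)m$ for $B_{r,k}$-free graphs with $m$ sufficiently large. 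Granting both, we conclude
\[ \frac1n \sum_{i=1}^n d_i^2 \le \lambda^2(G) \le 2\Bigl(1 - \frac1r\Bigr) m, \]
which is the desired inequality.

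For ingredient (ii), I would argue by contraposition using Theorem \ref{cor:joints}. That theorem supplies a constant $c_r > 0$ so that any $m$-edge graph $G$ with $\lambda^2(G) > 2\bigl(1 - \tfrac1r\bigr) m$ contains a copy of $B_{r, \ell}$ with $\ell \ge c_r \sqrt{m}$. Fixing $r$ and $k$, choose $m_0 = m_0(r,k)$ large enough that $c_r \sqrt{m_0} \ge k$; then for every $m \ge m_0$ such a graph $G$ contains $B_{r,\ell} \supseteq B_{r,k}$ and hence is not $B_{r,k}$-free. Equivalently, every $B_{r,k}$-free graph with at least $m_0$ edges satisfies $\lambda^2(G) \le 2\bigl(1-\tfrac1r\bigr)m$, as needed. (Alternatively, one can phrase this step as first deducing Conjecture \ref{conj-B-rk} for all $m\ge m_0$ from Theorem \ref{cor:joints}, and then quoting the first algebraic proof of \eqref{eq-NR} word for word.)

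The entire combinatorial difficulty is thus absorbed into Theorem \ref{cor:joints}, so there is no real obstacle left in the corollary itself; the only point requiring any care is the choice of the threshold $m_0$, which must be large relative to the implied constant $c_r$ from Theorem \ref{cor:joints}. This is harmless, since $k$ is a fixed constant while the guaranteed generalized-book size grows like $\sqrt{m}$.
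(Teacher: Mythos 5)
Your proposal is correct and follows essentially the same route as the paper: combine the Hofmeister--Rayleigh bound $\lambda^2(G)\ge \frac{1}{n}\sum_{i=1}^n d_i^2$ with the spectral bound $\lambda^2(G)\le 2\bigl(1-\frac{1}{r}\bigr)m$ for $B_{r,k}$-free graphs of large size, the latter obtained (as in the paper) by contraposition from Theorem \ref{cor:joints}. Your explicit choice of $m_0$ via the implied constant $c_r$ is exactly the care the paper's ``similar argument'' silently assumes.
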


Since $B_{r,k}$ is color-critical\footnote{A graph is called color-critical
if it contains an edge whose deletion reduces its chromatic number.}, a result of Simonovits implies 
$e(G) \le e(T_{n,r}) \le \left( 1- {1}/{r}\right) {n^2}/{2}$. 
Thus, Corollary \ref{coro-deg-square} yields that 
for fixed integers $r\ge 2$ and $k\ge 1$, 
if $G$ is a $B_{r,k}$-free graph with large order and size, 
then  
$ \sum_{i=1}^n d_i^2 \le \left( 1- {1}/{r}\right)^2n^3$. 
This result also extends Tur\'{a}n's theorem.

\section{Concluding remarks}

\label{sec-6}

\noindent\textbf{Books.} We have shown in Theorem \ref{thm-booksize}  that 
$  bk(G)>  \frac{1}{144}\sqrt{m}$ for every $m$-edge Nosal graph $G$. 
On the other hand, Example \ref{exam-prism} shows that there exists a Nosal graph 
$G$ with $bk(G) \le (\frac{1}{3}+o(1))\sqrt{m}$. 
It would be interesting to know what the right constant is.

\begin{problem} \label{conj-bk-13}
Does every $m$-edge Nosal graph contain a book of size $\frac{1}{3}\sqrt{m}$?
\end{problem} 

Recall that 
Edward's theorem states that $bk(G) > n/6$ if $m > n^2/4$. 
Here, we remark that Problem \ref{conj-bk-13}, if true, implies Edward's theorem, since the spectral condition $\lambda (G) > \sqrt{m}$ is weaker than the edge-condition $m > n^2/4$. 
Indeed, if $G$ is an $n$-vertex graph  with $m > n^2/4$ edges, then $\lambda (G)\ge 2m/n > n/2$ and so $\lambda^2 (G) \ge (2m/n)^2 > m$. 

\medskip

\noindent\textbf{Even cycles \&  Complete bipartite graphs.} 
Theorem \ref{thm:lim} determines the asymptotic counts of 4-cycles in Nosal graphs.   
A natural direction to explore is to extend the spectral supersaturation result of $C_4$ to the even cycle $C_{2k}$, the complete bipartite graph $K_{t,t}$ and in general other bipartite graphs (and establish a spectral version of the Sidorenko conjecture). 

\iffalse
we know that $f(m)= (\frac{1}{8} - o(1))m^2$. 
Examples \ref{exam-C4-1} and \ref{exam-C4-2} suggest the following problem of determining the second order term of $f(m)$.  
\begin{problem}
Is it true that 
$m^2/8 - f(m) = O(m^{3/2})$?  
\end{problem} 
\fi

%[HL] Note for us: for general bipartite graphs could be interesting as we can call it spectral Sidorenko?

%[Yongtao] If we can establish the spectral result on counting $K_{t,t}$, however I do not know the relation with Sidorenko's conjecture. 

\medskip

\noindent\textbf{Color-critical graphs.} 
Another interesting direction to explore is to understand the spectral supersaturation for color-critical graphs with chromatic number $r+1$ in an $m$-edge graph with 
spectral radius large than $\sqrt{(1- {1}/{r})2m}$. 
As an example, we see from 
\Cref{cor-extend-triangle} that if $r\ge 2$ and 
$\lambda (G)> \sqrt{(1-1/r)2m}$, then $G$ contains $\Omega_r(m^{\frac{r-1}{2}})$ copies of $K_{r+1}$.

Here is another example. Let $C_4^+$ be the \emph{kite} graph obtained from $C_4$ by adding a chord. Note that kite is color-critical and $C_4^+ = B_2=K_4^- $. Mubayi \cite{Mubayi2010} proved that 
every $n$-vertex graph with more than $\lfloor n^2/4\rfloor$ edges contains at least ${\lfloor n/2 \rfloor \choose 2}$ copies of $C_4^+$. For an $m$-edge Nosal graph $G$, by Theorem \ref{thm-booksize}, we know that 
$G$ contains a subgraph $B_k$  
with $k= \Omega(\sqrt{m})$. As we get a kite by taking any two triangles in this book, we obtain the following result. 

\begin{cor} \label{coro-C4+}
Every $m$-edge Nosal graph contains $\Omega(m)$ copies of $C_4^+$. 
\end{cor}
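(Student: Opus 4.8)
The plan is to derive Corollary \ref{coro-C4+} directly from Theorem \ref{thm-booksize}, exactly as the surrounding text hints. First I would invoke Theorem \ref{thm-booksize} to obtain a book $B_k$ inside $G$ with $k > \frac{1}{144}\sqrt{m}$; write $\{u,v\}$ for the spine edge of this book, and let $W = \{w_1, \dots, w_k\}$ be the set of page vertices, so that each $w_i$ is adjacent to both $u$ and $v$. The key combinatorial observation is that any two distinct page vertices $w_i, w_j$ together with $u, v$ span a copy of $C_4^+$: the four-cycle $u\, w_i\, v\, w_j\, u$ is present (using the four edges $uw_i, w_iv, vw_j, w_ju$), and the chord $uv$ is present, which is precisely the kite $C_4^+ = B_2$.

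Next I would count: there are $\binom{k}{2}$ choices of an unordered pair $\{w_i, w_j\}$ from $W$, and each gives rise to (at least) one copy of $C_4^+$ using the fixed spine $\{u,v\}$. Distinct pairs yield distinct copies of $C_4^+$ since the copy determines its two degree-$3$ vertices, which here are $u$ and $v$, and its two degree-$2$ vertices, which are $w_i$ and $w_j$. Hence $G$ contains at least $\binom{k}{2}$ copies of $C_4^+$. Since $k > \frac{1}{144}\sqrt{m}$, we get
\[
\#C_4^+ \ \ge\ \binom{k}{2} \ \ge\ \frac{k(k-1)}{2} \ =\ \Omega(k^2) \ =\ \Omega(m),
\]
which is the claimed bound.

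There is essentially no obstacle here beyond bookkeeping: the entire content of the corollary is packaged inside Theorem \ref{thm-booksize}, and what remains is the elementary remark that a large book contains quadratically many kites on its spine. The one point worth stating carefully is the injectivity of the map $\{w_i,w_j\}\mapsto$ (the kite on $u,v,w_i,w_j$), which follows because in a $C_4^+$ the pair of vertices of degree $3$ (within that subgraph) and the pair of degree $2$ are canonically determined, so no two pairs produce the same labelled vertex set. For completeness one could also note that $G$ being Nosal forces $m \geq 10$ (by Nikiforov's result guaranteeing a $C_4$), so $k \geq 1$ and the bound is non-vacuous, though this is immediate from $k > \frac{1}{144}\sqrt{m}$ for large $m$ and the statement is only asymptotic.
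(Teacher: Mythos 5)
Your proposal is correct and matches the paper's argument: the paper likewise applies Theorem \ref{thm-booksize} to obtain a book $B_k$ with $k = \Omega(\sqrt{m})$ and notes that any two triangles of the book (i.e.\ any two page vertices together with the spine edge) form a kite, giving $\binom{k}{2} = \Omega(m)$ copies of $C_4^+$. Your added remarks on injectivity and non-vacuity are harmless bookkeeping beyond what the paper states.
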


The  bound in Corollary \ref{coro-C4+} is tight up to a constant factor, 
since the graph $H$ in Example \ref{exam-KST} 
has ${t \choose 2}\approx \frac{1}{8} m$ copies of $C_4^+$.   
Observe that $C_4^+$ contains both $C_3$ 
and $C_4$ as subgraphs. 
Also note the difference between the supersaturation of $C_4^+$ in Nosal graphs and that of $C_3$ and $C_4$: every Nosal graph contains $\Theta (\sqrt{m})$ copies of $C_3$, $\Theta(m^2)$ copies of $C_4$, and $\Theta(m)$ copies of $C_4^+$.

\medskip

We summarize the discussions above in the following table. 

\begin{table}[H]
\centering
\begin{tabular}{ccccccccccc}
\toprule
  & $e(G)> \lfloor n^2/4\rfloor$ & $\lambda (G) > \sqrt{m}$ \\
\midrule
 $\#K_3$ & $\lfloor \frac{n}{2} \rfloor$    \cite{Erd1955,Erdos1964}  
 &  $\lfloor \frac{1}{2}(\sqrt{m} -1) \rfloor$ 
 \cite{NZ2021b}  \\
 $\#C_4$ & $\Omega (n^4) $  & $(\frac{1}{8} - o(1))m^2 $  \\ 
 Booksize $bk(G)$ & $\lceil \frac{n}{6} \rceil $  \cite{KN1979} 
 &  $\Omega (\sqrt{m})$  \\
      Largest $K_{2,t}$ & $\Omega(n)$ & $\Omega (\sqrt{m})$  \\ 
 $\#$Triangular edges & $2\lfloor \frac{n}{2}\rfloor +1$  \cite{EFR1992} & $\Omega (\sqrt{m})$ \\ 
 Maximal chordal subgraph & $\frac{3}{2}n-1$  \cite{EGOZ1989}  & $\Omega (\sqrt{m})$ \\  
 $\#K_4$-saturating edges & $\frac{2}{33}n^2 - \Theta(n)$  \cite{BL2014} 
 &  $\Omega (m)$ \\  
  $\#C_4^+$ & ${\lfloor n/2\rfloor \choose 2}$ 
 \cite{Mubayi2010} &  $\Omega(m)$ \\ 
\bottomrule 
\end{tabular}
\caption{Some spectral supersaturation results.}
 \label{tab-SSSR}
\end{table}

We can observe from Table \ref{tab-SSSR} that there exists a correspondence between $n$  and $\sqrt{m}$ in the vertex- and edge-spectral conditions. 
%In other words, the spectral extremal graphs in the above problems are always dense graphs in some sense. 
There have been significant effort devoted to determining the optimal constant factors on the left column. It would be interesting to determine the asymptotic spectral counts for the right column of Table \ref{tab-SSSR} as well. 

We plan to return to these topics in the near future.

\iffalse
{\color{red} 
Motivated by the spectral counting result on $4$-cycles, 
we suspect that the following may hold: For each $a\in \mathbb{N}$, there exists a constant $C=C(a)$ such that if $\lambda(G) \geq \sqrt{m} + C$, then $G$ contains $\Omega(m^a)$ copies of $K_{a,a}$. 

[ST: I think I can prove this using the technique in another paper of ours.]
}

Similarly, the results in Corollary \ref{coro-C4+} and Example \ref{exam-KST} inspire the following problem. 

{\color{red} 
\begin{problem} \label{conj-kite}
What is the limit superior of $C$ such that every Nosal graph $G$ with $m$ edges contains $C m$ copies of the kite $C_4^+$. 
Is it true that $C=1/8$? 
\end{problem} 
} 
\fi

%\section*{Declaration of competing interest}
%The authors declare that they have no conflicts of interest to this work.

%\section*{Data availability}
%No data was used for the research described in the article.

\section*{Acknowledgements}
We would like to thank 
Yuval Wigderson for the reference \cite{Erd1969}. This work was initiated at the 1st IBS ECOPRO student research program in fall 2023. The first and third authors would like to thank ECOPRO group for hosting them.

\frenchspacing

%\bibliographystyle{abbrv}
%\bibliography{ref.bib}

\end{document}